\begin{document}

\newcommand{\kf}[1]{\marginpar{\color{red}\tiny #1 --kf}}
\newtheorem{thm}{Theorem}[section]
\newtheorem{lemma}[thm]{Lemma}
\newtheorem{cor}[thm]{Corollary}
\newtheorem{prop}[thm]{Proposition}
\newtheorem{question}[thm]{Question}
\newtheorem*{mainthm0}{Theorem \ref{log}}
\newtheorem*{main}{Main Theorem}

\theoremstyle{remark}
\newtheorem{remark}[thm]{Remark}
\newtheorem{important remark}[thm]{Important Remark}
\newtheorem{definition}[thm]{Definition}
\newtheorem{q}[thm]{}
\newtheorem{example}[thm]{Example}
\newtheorem{fact}[thm]{Fact}
\newtheorem{convention}[thm]{Convention}

\def\diam{\operatorname{diam}}
\def\stab{\operatorname{stab}}
\def\supp{\operatorname{supp}}
\def\tr{\operatorname{tr}}
\def\cal{\mathcal}
\def\R{{\mathbb R}}
\def\ZZ{{\mathbb Z}}
\def\Q{{\mathbb Q}}
\def\G{\Gamma}

\def\Cayley{{\rm Cayley}}
\def\Mod{{\rm Mod}}
\def\ep{{\varepsilon}}
\def\Isom{{\rm Isom}}
\def\C{{\mathcal C}}
\def\T{{\mathcal T}}
\def\TT{\overline{\mathcal T}}
\def\dTT{\partial \overline{\mathcal T}}
\def\S{{\mathcal S}}
\def\SS{{\overline{\mathcal S}}}
\def\M{{\mathcal M}}
\def\L{{\mathcal L}}
\def\D{\Delta}
\def\s{\sigma}
\def\t{\tau}
\def\<{{\langle}}
\def\>{{\rangle}}

\def\bY{\bold Y}
\def\cP{\cal P}
\def\cC{\cal C}
\def\N{{\mathcal N}}

\def\red{\textcolor{red}}
\def\CG{{\mathcal G}}
\def\CA{{\mathcal {AX}}}
\newcommand{\A}{{\mathcal A}}
\newcommand{\B}{{\mathcal B}}

\title{Proper actions on finite products of quasi-trees}
\date{\today}

\author{Mladen Bestvina, Ken Bromberg, Koji Fujiwara
  \thanks{The first two authors gratefully acknowledge the
    support by the National Science Foundation. The third author is
    supported in part by Grant-in-Aid for Scientific Research
    (No. 15H05739, 20H00114)}}

\newcommand{\mb}[1]{\marginpar{\color{blue}\tiny #1 --mb}}
\newcommand{\kb}[1]{\marginpar{\color{green}\tiny #1 --kb}}

\maketitle

\abstract{We say that a finitely generated group $G$ has
property (QT) if it acts isometrically on a finite product of
quasi-trees so that orbit maps are quasi-isometric embeddings. A
quasi-tree is a connected graph with path metric quasi-isometric to a
tree, and product spaces are equipped with the
$\ell^1$-metric. 

We prove that residually finite hyperbolic groups and mapping class groups have (QT). }

\section{Introduction} We say that a finitely generated group $G$ has
property (QT) if it acts isometrically on a finite product of
quasi-trees so that orbit maps are quasi-isometric embeddings. A
quasi-tree is a connected graph with path metric quasi-isometric to a
tree, and product spaces are equipped with the
$\ell^1$-metric. The
first examples of such groups come with proper actions on products of
trees, for example free groups, surface groups (e.g. take the product
of Bass-Serre trees dual to a finite collection of filling curves), or
products thereof. In \cite{dj} Dranishnikov and Januszkiewicz show
that any Coxeter group admits such an action on a finite product of
trees. In particular, the same is true for any undistorted finitely
generated subgroup, and also for any commensurable group (see below),
and in particular it holds for right angled Artin groups.
It then also follows for the Haglund-Wise virtually special groups, since these are commensurable to finitely generated undistorted subgroups of RAAGs.

The goal of this paper is to prove the following two theorems
as an application of the projection complex techniques developed
in \cite{bbf}; see also \cite{bbfs}.

\begin{thm}\label{thm1}
  Let $G$ be a residually finite hyperbolic group. Then
  $G$ has (QT).
\end{thm}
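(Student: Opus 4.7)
The plan is to produce finitely many quasi-trees $T_1,\dots,T_n$ carrying isometric $G$-actions together with basepoints, so that the diagonal orbit map $G\to T_1\times\cdots\times T_n$ (with the $\ell^1$-metric) is a quasi-isometric embedding. The raw material will be a $G$-equivariant collection of quasi-axes of suitably chosen loxodromic elements, which I partition into finitely many subcollections, apply the BBF machine of \cite{bbf,bbfs} to each subcollection to produce a quasi-tree, and then verify that the product orbit map is a quasi-isometric embedding.

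First I would select finitely many infinite order elements $g_1,\dots,g_k\in G$ whose quasi-axes are \emph{filling}, in the sense that for every pair of points $x,y$ of (the Cayley graph of) $G$ the word distance $d(x,y)$ is coarsely equal to a sum of closest-point projections onto $G$-translates of $\mathrm{axis}(g_i)$. In a hyperbolic group such a family is easy to produce, for example by taking independent loxodromics whose axes are coarsely dense in the set of pairs of boundary points of $G$. Let $\bY$ denote the $G$-orbit of these axes, equipped with closest-point projections $\pi_Y$. For ``most'' pairs $Y\ne Y'\in\bY$ the projection $\pi_Y(Y')$ has uniformly bounded diameter, but when $Y$ and $Y'$ come from near-commuting elements the axes can fellow-travel on long segments and the projections are unbounded.

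This is where residual finiteness enters: pass to a deep enough finite-index normal subgroup $N\triangleleft G$ and replace each $g_i$ by a large power $g_i^m\in N$, so that (i) distinct axes in the new $G$-orbit $\bY$ have uniformly bounded overlap, and (ii) stabilizers of axes behave like maximal cyclic subgroups. With these bounds in place, the projection axioms of \cite{bbf} can be verified within suitable subcollections. I would then invoke the coloring / finite-index trick of \cite{bbfs} to produce a $G$-equivariant partition $\bY=\bY_1\sqcup\cdots\sqcup\bY_n$ into finitely many subcollections on each of which the projection axioms hold with uniform constants. Applying the BBF construction to each $\bY_j$ yields a quasi-tree of metric spaces $\C(\bY_j)$; because the individual $Y\in\bY_j$ are themselves quasi-lines, $\C(\bY_j)$ is a quasi-tree, and $G$ acts on it isometrically.

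Finally I would check that the diagonal orbit map $G\to\prod_j\C(\bY_j)$ is a quasi-isometric embedding. The upper bound is automatic from the Lipschitz property of orbit maps. The lower bound combines the filling property of the first step with the BBF distance formula, which identifies (up to additive and multiplicative constants) distance in $\C(\bY_j)$ with a sum of cut-off projection distances $d_Y(x,y)$ over $Y\in\bY_j$; summing over the colors then recovers $d_G(x,y)$. The main obstacle is the equivariant finite-coloring step: arranging, using residual finiteness quantitatively, that the BBF projection axioms hold uniformly across a $G$-invariant collection that is still rich enough to detect the whole metric on $G$. A secondary but nontrivial point is the lower bound in the distance formula, i.e.\ verifying that the chosen family of axes really captures all of $d_G(x,y)$, and not merely a proper subscale of it.
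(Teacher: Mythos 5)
Your overall architecture (a $G$-invariant family of quasi-axes with closest-point projections, a finite coloring obtained from residual finiteness, the BBF quasi-tree construction on each color, and the distance formula for the lower bound) is exactly the paper's. But two of the mechanisms you propose for the key steps would not work as stated.

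First, the step where you ``replace each $g_i$ by a large power $g_i^m$ in a deep finite-index normal subgroup so that distinct axes in the new $G$-orbit have uniformly bounded overlap'' fails: the quasi-axis of $g_i^m$ is the same quasi-line as that of $g_i$, so passing to powers changes neither the collection of translates nor their overlaps. No choice of powers or of finite-index subgroup makes \emph{all} distinct translates in a full $G$-orbit have bounded overlap; translates $h\gamma$ with $h$ in one of the finitely many ``bad'' double cosets $C(\gamma)hC(\gamma)$ (Proposition \ref{acylind_finite}, via acylindricity of the action on the Cayley graph) will always fellow-travel $\gamma$ on a long segment. The correct use of residual finiteness is different: after passing to a torsion-free finite-index subgroup, the stabilizer $C(\gamma)$ is a maximal cyclic subgroup, hence a centralizer, hence separable (Lemma \ref{separability}); separability lets you choose a finite-index $H$ disjoint from the finitely many bad double cosets, so that \emph{within each $H$-orbit} all projections have diameter $\le\theta$. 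The bad translates are not eliminated, they are merely placed in different colors, and the spaces $\C_K(\A_i)$ carry an $H$-action (not a $G$-action); one recovers $G$ by the induction of Section \ref{induction}.

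Second, the lower bound. A $G$-finite collection of axes (finitely many conjugacy classes) is in general \emph{not} dense in $\partial G\times\partial G$, so your ``filling'' hypothesis cannot be arranged for the preferred family itself. The paper resolves this with a two-level structure: the full, $G$-infinite collection $\tilde\A$ of all axes of maximal cyclic subgroups is used only to connect $x$ to $y$ (Theorem \ref{quasi-axes}), while the finite preferred collection $\A$ is chosen so that every geodesic subword of length $L$ occurring on \emph{any} axis occurs on some preferred axis (Section \ref{s:axes}). The connecting axis is then covered by overlapping length-$L$ pieces of preferred axes, and Proposition \ref{main estimate} converts this covering into the lower bound $|h|\le 2\sum_i\sum_{\gamma\in\A_i}d_\gamma(x_i,h(x_i))_K+L+2R$. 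Without this covering argument the ``secondary but nontrivial point'' you flag at the end is in fact the crux, and your proposal does not supply a mechanism for it.
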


\begin{thm}\label{thm2}
  Mapping class groups have (QT).
\end{thm}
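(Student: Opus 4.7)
The strategy is to combine the Masur--Minsky distance formula with the projection complex machinery from \cite{bbf} and its refinement in \cite{bbfs}. First I would use the distance formula to rewrite the word metric on $\Mod(S)$ (via its quasi-isometry with the marking graph) as coarsely the sum $\sum_Y [d_Y(\mu,\nu)]_K$ of cut-off subsurface projection distances, where $Y$ ranges over isotopy classes of proper essential subsurfaces of $S$. These isotopy classes partition into finitely many $\Mod(S)$-orbits $\bY_1,\dots,\bY_n$, one per topological type. For each $\bY_i$, Behrstock's inequality supplies projection data among the curve graphs $\{\C(Y)\}_{Y \in \bY_i}$ satisfying the projection axioms of \cite{bbf}, so that the projection complex construction produces a $\Mod(S)$-equivariant quasi-tree of metric spaces $\C(\bY_i)$ whose vertex spaces are the $\C(Y)$ and in which distances coarsely compute the cut-off projection sums within $\bY_i$.

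The space $\C(\bY_i)$ is hyperbolic but not itself a quasi-tree, because curve graphs of non-annular subsurfaces are not quasi-trees. To remedy this I would induct on the complexity $\xi(Y)$: annular curve graphs are already quasi-isometric to $\R$, and for higher $\xi(Y)$ the inductive hypothesis supplies a finite product of quasi-trees admitting a $\Mod(Y)$-equivariant quasi-isometric embedding of the marking graph of $Y$, and hence a replacement for $\C(Y)$. Substituting these products for the vertex spaces and invoking the criterion from \cite{bbfs} that a projection-complex assembly of quasi-trees is a quasi-tree, we obtain, for each $\bY_i$, finitely many quasi-trees $T_{i,1},\dots,T_{i,k_i}$ carrying a $\Mod(S)$-action whose orbits record the contribution of $\bY_i$ to the distance formula.

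Taking the product over all $i,j$, the diagonal orbit map $\Mod(S) \to \prod_{i,j} T_{i,j}$ is Lipschitz, and the distance formula guarantees that the sum of coordinate distances coarsely dominates the word metric, so the orbit map is a quasi-isometric embedding, giving (QT). The main obstacle will be the inductive step: one must arrange the replacement of $\C(Y)$ by a quasi-tree product so that (i) the substitution is $\Mod(Y)$-equivariant, (ii) the quasi-isometry constants are uniform as $Y$ ranges over $\bY_i$, and (iii) the modified projection data still satisfy the axioms of \cite{bbf}. Establishing these compatibilities and verifying the hypotheses of the quasi-tree-of-quasi-trees criterion in \cite{bbfs} is the technical heart of the argument, with the annular contribution (which falls outside the usual complexity induction and needs a separate direct construction from Dehn-twist coordinates) as a likely pressure point.
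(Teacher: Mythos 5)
Your first paragraph matches the paper's starting point: the Masur--Minsky distance formula, the partition of subsurfaces into finitely many transverse $\Mod(\Sigma)$-orbits, and the projection-complex machinery of \cite{bbf} assembling the curve graphs $\C(Y)$ into finitely many hyperbolic spaces. But the key step --- converting these into quasi-trees --- does not work as you propose. The inductive hypothesis gives a quasi-isometric embedding of $\Mod(Y)$ (equivalently, the marking graph of $Y$) into a finite product of quasi-trees; this is \emph{not} a replacement for $\C(Y)$, because $\C(Y)$ is an exponentially distorted, locally infinite quotient of the marking graph, and the distance formula needs the contributions $d_Y(\mu,\nu)$ measured in the curve-graph metric. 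Substituting the marking-graph product for the vertex space changes the metric drastically and destroys the coarse identity with $\sum_Y [d_Y]_K$. Moreover, a finite product of quasi-trees is not itself a quasi-tree (it is not even hyperbolic), so it cannot be fed back into the quasi-tree criterion of Theorem \ref{bbf}, which requires the vertex spaces to be uniform quasi-trees. Finally, $\C(Y)$ for non-annular $Y$ is genuinely not a quasi-tree, and no equivariant quasi-isometric substitute exists; if one tries instead to mimic the hyperbolic-group argument by taking axes of pseudo-Anosovs of bounded translation length, local infiniteness of $\C(Y)$ produces infinitely many conjugacy classes, and the separability/coloring step collapses.

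The missing idea, which is the technical heart of the paper, is to give up on embedding all of $\C(Y)$ and instead embed only its \emph{thick} part: one selects finitely many $\Mod(\Sigma)$-orbits of pseudo-Anosov axes on each $Y$ so that every $\hat T$-thick tight geodesic segment of length $\check T$ lies on a selected axis (Theorem \ref{constructing pAs} and Corollary \ref{finite-thick}), colors these axes into finitely many families with uniformly bounded mutual projections using acylindricity and separability of the stabilizers $EC(\psi;Y)$ (Lemma \ref{MCG centralizer}, Corollary \ref{mcg_separable}), and builds one quasi-tree $\C_K(\A_i)$ per color from the axes themselves rather than from the curve graphs. The distance lost by discarding non-thick segments of geodesics in $\C(Y)$ is recovered in the curve graphs of proper subsurfaces of $Y$; making this precise is the \emph{thick distance formula} (Theorem \ref{thick.MM}), proved by induction on complexity via Lemma \ref{thick_distance} and Theorem \ref{thm:thick_distance}. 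Your proposal contains no analogue of the thickness restriction or of the mechanism by which the discarded distance reappears lower in the complexity hierarchy, and without these the lower bound on the orbit map fails.
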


Hamenst\"adt announced Theorem \ref{thm2} in the Fall 2016, but
our proof is different. Earlier, Hume \cite{hume} constructed a
(nonequivariant) quasi-isometric embedding of mapping class groups in a
finite products of trees. In the Spring 2018 Hamenst\"adt also announced
that Artin groups of finite type have (QT).

Cocompact lattices in $Sp(n,1), n>1$  satisfy the assumptions of
Theorem \ref{thm1} and they have Kazhdan's property (T). 
In particular they do not have the Haagerup property, namely,
they do not act properly by isometries on the Hilbert space.
Recall also that
if  a group with property (T)
acts on a  tree, then it must have a fixed point
(by Serre and Watatani, cf. \cite[Section 2.3]{V}).
On the other hand if a finitely generated 
group acts properly on a finite dimensional CAT(0) cube complex, 
e.g., a finite product of  simplicial trees, then it has the Haagerup
property, \cite{NR}.

In view of the lattice example in $Sp(n,1)$, by Theorem \ref{thm1},
having a proper action on a finite product of quasi-trees that gives a
quasi-isometric embedding of a group is not enough to expect a proper
isometric group action on the Hilbert space.
It is unknown if mapping class groups have
either property (T) or the Haargerup property.

Property (QT) is a strong form of finiteness of asymptotic dimension.
It was proved by Gromov \cite{gromov-asymptotic} that hyperbolic
groups have finite asymptotic dimension, and by the authors in
\cite{bbf} that mapping class groups do as well. See also \cite{bhs}
for a quadratic bound.
Also, it was known that a hyperbolic group
admits a quasi-isometric embedding into the 
product of $n+1$ binary trees, where $n$ is the topological 
dimension of the boundary at infinity of the group, \cite{BDS}.

Finally, we remark that higher rank lattices do not have (QT) even
though they have finite asymptotic dimension: by
\cite{derham} an isometric action on a finite product of quasi-trees
preserves the deRham decomposition, and by Haettel \cite{haettel}
higher rank lattices do not have non-elementary actions on
quasi-trees. 

We would like to thank the referee for comments, which 
improved the presentation of the paper.

\section{Background with complements}

\subsection{Separability}
We thank Chris Leininger and Ben McReynolds for pointing out the
following fact. Recall that a subgroup $H<G$ is {\it separable} if it
is the intersection of all finite index subgroups that contain it. Thus
$G$ is residually finite if and only if the trivial subgroup is
separable. 

\begin{lemma}\label{separability}
  Suppose $G$ is residually finite. Then for
  every element $x\in G$ the centralizer
  $$C_G(x)=\{g\in G\mid gx=xg\}$$
  is separable.
\end{lemma}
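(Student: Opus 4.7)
The plan is to use the standard criterion: a subgroup $H \le G$ is separable iff for every $g\notin H$ there is a finite index subgroup $K$ of $G$ with $H\subseteq K$ and $g\notin K$. So fix $g\notin C_G(x)$; the task is to build such a $K$.

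The key observation is that $g\notin C_G(x)$ is equivalent to the commutator $[g,x]=gxg^{-1}x^{-1}$ being nontrivial. Since $G$ is residually finite, I can find a finite index normal subgroup $N\triangleleft G$ such that $[g,x]\notin N$. Passing to the finite quotient $\pi:G\to G/N$, this says that $\pi(g)$ and $\pi(x)$ do not commute in $G/N$, i.e., $\pi(g)\notin C_{G/N}(\pi(x))$.

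Now I would define
\[
K=\pi^{-1}\bigl(C_{G/N}(\pi(x))\bigr).
\]
This subgroup is of finite index in $G$ since it contains $N$ and its image in $G/N$ is a subgroup of the finite group $G/N$. It contains $C_G(x)$, because any element commuting with $x$ maps to an element commuting with $\pi(x)$. And by construction $g\notin K$. This produces the required finite index subgroup separating $g$ from $C_G(x)$, and hence proves that $C_G(x)$ is separable.

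I don't anticipate any real obstacle here: the argument is essentially the observation that centralizers are pulled back from centralizers in quotients, together with the fact that residual finiteness detects the nontrivial commutator $[g,x]$ in a finite quotient. The same argument actually shows more generally that in a residually finite group, the centralizer of any finite subset is separable.
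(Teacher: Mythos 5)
Your argument is correct and is essentially identical to the paper's proof: both detect the nontrivial commutator $[g,x]$ in a finite quotient and take $K$ to be the preimage of the centralizer of the image of $x$. No issues.
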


\begin{proof}
  Let $g\in G\smallsetminus C_G(x)$, so $gxg^{-1}x^{-1}\neq 1$. We
  need to find a finite index subgroup $G'<G$ such that $g\not\in G'$
  but $G'\supset C_G(x)$. By residual finiteness
  there is a finite quotient $\overline G$ of $G$ such that the above
  commutator maps nontrivially, i.e. the images $\overline x,
  \overline g$ of $x,g$ do not commute. Then let $G'$ be the preimage
  of $C_{\overline G}(\overline x)$. 
\end{proof}

We also note the following: {\it if $H<G$ is separable and the double
  coset $HgH$ is distinct from $H$ (i.e. $g\not\in H$) then there is a
  finite index subgroup $G'<G$ disjoint from $HgH$.} Indeed, take $G'$
so that $g\not\in G'\supset H$.

\subsection{Induction}\label{induction}

We observe:

{\it  If $H<G$ has finite index, and $H$ satisfies (QT), then so does $G$.}

 More generally, if
$H$ acts by isometries on a metric space $X$ with orbit maps $H\to X$
QI embeddings, then $G$ isometrically acts on $X^{[G:H]}$ 
with QI orbit maps. This is seen by the standard induction
construction. Define
$$Y=Map_H(G,X)$$
as the set of $H$-equivariant functions $G\to X$ where $H$ acts on $G$
by left multiplication (and on $X$ on the left). Then $Y$ is a
$G$-set via
$$(g\circ f)(\gamma)=f(\gamma g^{-1}).$$
Finally, as a metric space, $Y$ is isometric to
$X^{[G:H]}$: choose coset representatives $g_i$ so that $G=\coprod
Hg_i$ and define an isometry $Y\to X^{[G:H]}$ via
$$f\mapsto (f(g_1),f(g_2),\cdots,f(g_{[G:H]})).$$

\subsection{Projection complexes}\label{BBF}
In this section we review the construction of projection complexes in
\cite{bbf} with improvements from \cite{bbfs}.

\newcommand{\X}{{\mathcal X}}
\newcommand{\Z}{{\mathcal Z}}

The input is a collection $\bY$ of geodesic metric spaces and
for $X,Z\in\bY$ with $X\neq Z$ there is a projection $\pi_Z(X)\subset Z$. 
We also define
$d_Y(X,Z)=\diam \pi_Y(X)\cup\pi_Y(Z)$ for $X,Y,Z\in\bY$.
The pair $(\bY, \{\pi_Y\})$ satisfies the
 {\em projection axioms} for a {\em projection constant} $\xi\geq 0$ if
  \begin{enumerate}
  \item [(P0)] $\diam \pi_Y(X)\leq\xi$ when $X\neq Y$, (Bounded projection)
    \item [(P1)] if $X,Y,Z$ are distinct and $d_Y(X,Z)>\xi$ then
      $d_X(Y,Z)\leq\xi$, (Behrstock inequality)
    \item [(P2)] for $X\neq Z$ the set
      $$\{Y\in\bY\mid d_Y(X,Z)>\xi\}$$
      is finite. (Finiteness)
  \end{enumerate}

If we replace (P1) with
\begin{enumerate}
\item [(P1)$'$] if $X,Y,Z$ are distinct and $d'_Y(X,Z)>\xi$ then
      $\pi'_X(Y)=\pi'_X(Z)$
\end{enumerate}
then the collection satisfies the {\em strong projection axioms}. While there are many natural situations where the projection axioms hold, the strong projection axioms are not as natural. However, we can modify the projections so that they do hold. The following is proved in \cite[Theorem 4.1 and Lemma 4.13]{bbfs}.
\begin{thm}\label{strong axioms}
If the collection $(\bY, \{\pi_Y\})$ satisfies the projection axioms then there are projections $\{\pi'_Y\}$ such that $(\bY, \{\pi'_Y\})$ satisfy the strong projections axioms with projection constant $\xi'$ where $\pi'_Y(X)$ and $\pi_Y(X)$ are a uniform Hausdorff distance apart and $\xi'$ only depends on $\xi$.
\end{thm}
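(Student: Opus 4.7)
The plan is to enlarge each projection $\pi_X(Y)$ by taking the union of $\pi_X(Z)$ over all $Z \in \bY \setminus \{X\}$ that are ``on the same side of $X$'' as $Y$ in a natural equivalence sense. Concretely, fix a threshold $K \geq \xi$ and on $\bY \setminus \{X\}$ let $\sim_X$ be the equivalence relation generated by declaring $Y \sim_X Z$ whenever $d_X(Y,Z) \leq K$. I would then define
\[
\pi'_X(Y) := \bigcup_{Z \in [Y]_X} \pi_X(Z),
\]
where $[Y]_X$ is the $\sim_X$-class of $Y$.

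Granting the diameter bound below, the strong axiom (P1)$'$ becomes almost tautological. Since $\pi_X(Y) \subseteq \pi'_X(Y)$ for every pair, we have $d_Y(X,Z) \leq d'_Y(X,Z)$, so for $\xi'$ chosen a bit larger than $\xi$ plus the diameter bound, any hypothesis $d'_Y(X,Z) > \xi'$ forces $d_Y(X,Z) > \xi$; then (P1) yields $d_X(Y,Z) \leq \xi \leq K$, so $Y \sim_X Z$, which by definition gives $\pi'_X(Y) = \pi'_X(Z)$. Axiom (P2)$'$ similarly reduces to (P2), and (P0)$'$ reduces to the diameter bound. The uniform Hausdorff bound between $\pi'_X(Y)$ and $\pi_X(Y)$ is then immediate, as both are contained in a set of diameter $\leq C(\xi)$ that contains $\pi_X(Y)$.

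The technical heart is thus showing $\diam \pi'_X(Y) \leq C(\xi, K)$, uniformly in $X, Y$. A priori $[Y]_X$ can be infinite and the chains defining it arbitrarily long, so the union could conceivably wander off. The key observation I would exploit is (P2): only finitely many $Z$ satisfy $d_X(Y,Z) > K$, so in any chain $Y = W_0, \dots, W_n = Z$ witnessing $Y \sim_X Z$, the projections $\pi_X(W_i)$ lie in the $K$-neighborhood of $\pi_X(Y)$ for all but finitely many indices. For the ``excursion'' indices, (P1) applied to triples $(Y, W_i, W_j)$ forces the stray projections to still lie in a bounded region: a large excursion would require $d_{W_i}(X, Y)$ or $d_{W_i}(X, W_j)$ to be small, which then limits where $\pi_X(W_i)$ can sit via the triangle-type inequalities among the $d$-quantities.

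The main obstacle will be packaging this diameter bound cleanly, since one must simultaneously control how many elements of $[Y]_X$ can have far-away projections and how far any individual one can stray. I expect an induction on the number of excursion indices in a chain, with the base case handled by (P2) and the inductive step by (P1). Once $C(\xi, K)$ is established (taking, e.g., $K = \xi$ so that $\xi'$ depends only on $\xi$), the remaining conclusions of the theorem follow from the short observations above.
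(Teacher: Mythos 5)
There is a genuine gap, and it sits exactly where you placed the ``technical heart'': the diameter bound $\diam \pi'_X(Y)\le C(\xi,K)$ is in fact \emph{false} for the projection you define. The problem is that you take the equivalence relation \emph{generated} by $d_X(Y,Z)\le K$. In the standard examples (and this is the generic situation) the union $\bigcup_{Z\neq X}\pi_X(Z)$ is coarsely dense in $X$, which is unbounded; for instance, for the Farey horoballs in $\mathbb H^2$, the projections of all other horoballs to a fixed horoball $X$ land near the rational points of the bounding horocycle, so between any two of them there is a finite chain with consecutive projections within distance $1$. Hence a single $\sim_X$-class swallows everything and $\pi'_X(Y)$ is all of $\partial X$. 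Your attempt to rule this out via (P2) rests on a misreading of that axiom: (P2) says that for a \emph{fixed pair} $X,Z$ only finitely many \emph{targets} $Y$ satisfy $d_Y(X,Z)>\xi$; it does not say that for fixed target $X$ and fixed $Y$ only finitely many $Z$ have $\pi_X(Z)$ far from $\pi_X(Y)$. The latter statement is what your chain argument needs, and it is false. Each step of a chain can move the projection by up to $K$, chains can be arbitrarily long, and nothing in (P0)--(P2) prevents the class from sweeping out all of $X$.

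For what it is worth, the paper does not prove this theorem either; it quotes \cite[Theorem 4.1 and Lemma 4.13]{bbfs}. The construction there is close in spirit to yours but crucially does \emph{not} take the transitive closure: $\pi'_Y(X)$ is the union of $\pi_Y(Z)$ over the \emph{one-step} set of $Z$ with $d_Y(X,Z)\le 2\xi$ (together with $X$ itself). With only one step, boundedness is immediate from the coarse triangle inequality ($\diam\pi'_Y(X)\le 5\xi$ or so), but you then lose exactly the feature that made your (P1)$'$ ``almost tautological'': the one-step sets are not equivalence classes, and showing that $d'_Y(X,Z)$ large forces the two one-step sets at $X$ to \emph{coincide} (not merely overlap --- the naive triangle inequality only gives $d_X(W,Z)\le 3\xi$ for $W$ in the set for $Y$) is the real content of the cited lemmas. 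So the trade-off is unavoidable: either the set is an equivalence class and (P1)$'$ is free but boundedness fails, or the set is a bounded one-step neighborhood and the exact equality in (P1)$'$ requires genuine work.
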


Let
$\cC_K(\bY)$ denote the space obtained from the disjoint union
$$\coprod_{Y\in\bY} Y$$
by joining points in $\pi_X(Z)$ with points in $\pi_Z(X)$ by an edge of length one whenever $d_Y(X,Z) <K$  for all $Y\in\bY\backslash\{X,Z\}$. When the spaces are
graphs and projections are subgraphs we can join just the vertices in
these projections. If a group $G$ acts on the disjoint union of $Y\in\bY$ by isometries and the $\pi_Y$ are $G$-invariant, ie, $\pi_{gY}(gX)=g\pi_Y(X)$, then $G$ acts isometrically on $\cC_K(\bY)$. 

\begin{thm}[\cite{bbf}]\label{bbf}
If $(\bY, \{\pi_Y\})$ satisfy the strong projection axioms with projection constant $\xi$ then for all $K>2\xi$ 
\begin{itemize}
\item   $\cC_K(\bY)$ is hyperbolic if all $Y\in\bY$ are $\delta$-hyperbolic;
\item $\cC_K(\bY)$ is a quasi-tree if all $Y\in\bY$ are quasi-trees with uniform
  QI constants.
  \end{itemize}
\end{thm}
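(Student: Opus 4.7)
My approach is to realize $\cC_K(\bY)$ as a ``blow-up'' of the auxiliary projection complex $\cP_K(\bY)$, whose vertex set is $\bY$ and which has an edge between $X$ and $Z$ exactly when $d_Y(X,Z) < K$ for every other $Y \in \bY$. The first step is to show that $\cP_K(\bY)$ is itself a quasi-tree for all $K > 2\xi$; this is the skeleton on which the rest of the argument rests. Under the strong projection axioms, (P2) and especially (P1)$'$ produce, for any pair $X \neq Z$, a canonical linearly ordered sequence $X = Y_0, Y_1, \ldots, Y_n = Z$ of ``between'' elements with $d_{Y_i}(X,Z) > \xi$, in which consecutive elements are joined by edges of $\cP_K(\bY)$; these sequences verify Manning's bottleneck criterion and identify $\cP_K(\bY)$ as a quasi-tree.

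Next I would establish a distance formula for $\cC_K(\bY)$ of the form
$$d_{\cC_K}(x,y) \ \asymp\ d_X\!\bigl(x,\pi_X(Z)\bigr) + \sum_{i=1}^{n-1} d_{Y_i}(X,Z) + d_Z\!\bigl(\pi_Z(X),y\bigr) + n,$$
where $x \in X$, $y \in Z$, the sequence $Y_0,\ldots,Y_n$ is as above, and the constants depend only on $\xi$ and $K$. The upper bound is supplied by the \emph{canonical path} that moves in $X$ from $x$ to $\pi_X(Y_1)$ (which equals $\pi_X(Z)$ by (P1)$'$), crosses the edge into $Y_1$, traverses $Y_1$ between the projections $\pi_{Y_1}(X)$ and $\pi_{Y_1}(Y_2) = \pi_{Y_1}(Z)$, and so on until it reaches $y$ in $Z$. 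The lower bound is the technical heart: any path from $x$ to $y$ must visit each intermediate $Y_i$, and must do so at cost at least the corresponding projection distance.

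With the distance formula in hand, both conclusions follow by assembling geodesic triangles: a triangle in $\cC_K(\bY)$ decomposes along its three canonical paths into a ``backbone'' triangle in $\cP_K(\bY)$ (uniformly thin because $\cP_K(\bY)$ is a quasi-tree by the first step) together with ``fiber'' pieces inside individual $Y \in \bY$ (uniformly thin by $\delta$-hyperbolicity in the first bullet, or with a uniform Manning bottleneck in the quasi-tree case). Reassembling gives the desired uniform thinness, respectively uniform bottleneck, for $\cC_K(\bY)$.

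The main obstacle is the lower bound in the distance formula: one must rule out shortcut paths that skip or reorder the canonical chain $Y_0,\ldots,Y_n$. Axiom (P1)$'$ is exactly what forbids such shortcuts, since any detour would force the projection of an endpoint to some $Y_i$ to jump, contradicting the equality $\pi_{Y_i}(X) = \pi_{Y_i}(Z)$ guaranteed by the strong axiom; (P2) ensures that only finitely many ``off-chain'' $Y$'s are ever relevant and so contribute only a bounded additive error. Making this comparison quantitative and uniform in $K$ is where the bulk of the work in \cite{bbf,bbfs} lies.
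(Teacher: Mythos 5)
Your outline is essentially the argument of the cited source: the paper itself does not prove Theorem \ref{bbf} but imports it from \cite{bbf} (with the refinements of \cite{bbfs}), where the proof proceeds exactly as you describe --- first showing the projection complex $\cP_K(\bY)$ is a quasi-tree via the ordered chain of large projections and Manning's bottleneck criterion, then upgrading to $\cC_K(\bY)$ via standard paths and the distance formula (the paper's Proposition \ref{distfor2}), and finally applying a thin-triangles/guessing-geodesics criterion to those paths. Your sketch is faithful to that strategy, with the quantitative work correctly identified as residing in the lower bound of the distance formula.
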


There is a very useful distance formula in $\cC_K(\bY)$, see \cite[Theorem
  6.3]{bbfs}. Let $X,Z\in\bY$ and $x\in X$, $z\in Z$. We define
$d_Y(x,z)=d_Y(X,Z)$ if $Y\neq X,Z$, $d_X(x,z)=\diam
(\{x\}\cup\pi_X(Z))$ if $X\neq Z$, and $d_X(x,z)$ is the given
distance in $X$ if $X=Z$. We also define the {\em distance function with threshold $K$} by 
$$d_Y(,)_K =  \left\{\begin{array}{cl} d_Y(,) &\mbox{ if }d_Y(,) \ge K\\  0 & \mbox{ otherwise.}\end{array} \right.$$

\begin{prop}\label{distfor2}
 Let $(\bY, \{\pi_Y\})$ satisfy the strong projection axioms with projection constant $\xi$.
  Let $x\in X$ and $z\in Z$ be two points of $\cC(\bY)$ with
  $X,Z\in\bY$. Then
  $$\frac 14 \sum_{Y\in\bY} d_Y(x,z)_K\leq
  d_{\cC_K(\bY)}(x,z)\leq 2\sum_{Y\in\bY}d_Y(x,z)_K+3K$$
  for all $K \ge 4\xi$.
\end{prop}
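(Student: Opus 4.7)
The strategy is the familiar Masur--Minsky style argument: construct an efficient explicit path for the upper bound, and for the lower bound show that any path in $\cC_K(\bY)$ must ``pay'' for each coordinate $Y$ with $d_Y(x,z) \geq K$. By (P2) the indexing set $\bY_K(x,z) = \{Y \in \bY : d_Y(x,z) \geq K\} \cup \{X, Z\}$ is finite, so all sums below are finite.

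For the upper bound, I would use the strong projection axiom (P1)$'$ to linearly order $\bY_K(x,z)$ as $X = Y_0, Y_1, \ldots, Y_n = Z$ so that successive BBF projections stabilize along the order (this is the standard ``standard path'' ordering of \cite{bbf,bbfs}, available precisely because we are working with the strong axioms). I would then concatenate the following path: a geodesic inside $Y_0$ from $x$ to a point of $\pi_{Y_0}(Y_1)$; a bridge edge of length one to $\pi_{Y_1}(Y_0) \subset Y_1$, which exists by the definition of $\cC_K(\bY)$ because for the chosen ordering and $K \geq 4\xi$ every intermediate $Y$ satisfies $d_Y(Y_0,Y_1) < K$; a geodesic inside $Y_1$ from there to $\pi_{Y_1}(Y_2)$; and so on, ending at $z \in Y_n$. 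The length of the portion inside $Y_i$ is bounded by $d_{Y_i}(x,z) + O(\xi)$, and the number of bridge edges satisfies $n \leq \tfrac{1}{K}\sum_Y d_Y(x,z)_K + O(1)$, so summing and using $K \geq 4\xi$ gives the upper bound $2\sum_Y d_Y(x,z)_K + 3K$.

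For the lower bound, the plan is to extend each projection $\pi_Y$ to a coarsely Lipschitz ``retraction'' $\cC_K(\bY) \to Y$ with constants depending only on $\xi$ (this is where $K \geq 4\xi$ enters); individual edges of $\cC_K(\bY)$ then have uniformly bounded image in any given $Y$. Consequently, for each $Y$ with $d_Y(x,z) \geq K$, an arbitrary path from $x$ to $z$ must accumulate at least a fixed multiple of $d_Y(x,z)$ worth of length. One then argues that each unit subsegment of the path is charged to only a bounded number of $Y \in \bY_K(x,z)$ simultaneously: this ``bounded overlap'' step uses the strong projection axioms to show that geodesics in $\cC_K(\bY)$ traverse the relevant $Y$'s essentially in the order from the upper bound, with at most bounded backtracking. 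Combining these two ingredients yields the desired $\tfrac{1}{4}$ lower bound.

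The main obstacle will be the bounded overlap step in the lower bound: controlling the extent to which distinct elements of $\bY_K(x,z)$ can simultaneously charge the same portion of a geodesic, so that the per-$Y$ inequalities combine into a single sum with explicit constant $\tfrac{1}{4}$. This is precisely where the strong projection axioms and the threshold condition $K \geq 4\xi$ are used, via the behavior-of-geodesics results in the projection complex from \cite{bbf,bbfs}.
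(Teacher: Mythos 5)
The paper does not actually prove this proposition: it is quoted directly from \cite[Theorem 6.3]{bbfs}, so there is no in-house argument to compare yours against. At the level of strategy your outline does match the proof in that reference --- a standard path through the ordered set $\bY_K(x,z)$ for the upper bound, and a per-coordinate lower bound combined with an ordering/non-overlap argument for the lower bound --- and your accounting for the upper bound (length $d_{Y_i}(x,z)+O(\xi)$ inside each $Y_i$, plus one bridge edge per consecutive pair, absorbed using $K\ge 4\xi$) is essentially right. The substantive weak point is the lower bound. The coarsely Lipschitz retraction $\cC_K(\bY)\to Y$ you describe moves by up to roughly $K$ across a bridge edge joining two spaces other than $Y$ (the edge exists precisely when $d_Y(W_1,W_2)<K$), so by itself it only yields $d_{\cC_K(\bY)}(x,z)\ge d_Y(x,z)/O(K)$; summing these with bounded overlap would produce a multiplicative constant of order $1/K$, not the absolute constant $\tfrac14$ in the statement. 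To get $\tfrac14$ one needs the stronger fact that when $d_Y(x,z)$ exceeds the threshold, every path from $x$ to $z$ must actually enter the subspace $Y$ and travel a definite fraction of $d_Y(x,z)$ inside it, with excursions out of $Y$ controlled via (P1)$'$ (the shadow in $Y$ of any excursion stays $\xi$-close to its exit point); the ordering then keeps the contributions of distinct $Y$'s disjoint. As you yourself flag, that entry/exit analysis is where all the work lies, and your write-up defers exactly that work to the citation --- which is also what the paper does.
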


\section{Proof of Theorem \ref{thm1}}

For simplicity all metric spaces will be graphs with each edge of
length 1 (and subspaces will be subgraphs).

\subsection{Projection axioms in $\delta$-hyperbolic spaces}
\begin{prop}\label{hyperbolic axioms}
Let $\bY$ be a collection of quasi-convex subspaces (with uniform constants) in a $\delta$-hyperbolic space $\mathcal Y$. For $X,Y\in \bY$ let $\pi_Y(X)$ be the nearest point projection and assume that $\diam \pi_Y(X) \le \theta$ (for all $X\neq Y\in\bY$). Then $(\bY, \{\pi_Y\})$ satisfies the projection axioms with projection constant $\xi$.
\end{prop}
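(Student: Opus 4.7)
The plan is to establish each of (P0), (P1), (P2) separately, with (P0) being immediate and the other two resting on a standard hyperbolic geometry fact about nearest-point projection to quasi-convex sets: if $A$ is $Q$-quasi-convex in a $\delta$-hyperbolic space and $x,y \in \mathcal Y$ with $d(\pi_A(x), \pi_A(y))$ larger than a constant $D = D(\delta, Q)$, then any geodesic $[x,y]$ contains a subsegment $[p,q]$ lying in the $C$-neighborhood of $A$ (with $C = C(\delta,Q)$) such that $d(p, \pi_A(x)) \le C$ and $d(q, \pi_A(y)) \le C$. Thus the geodesic enters and exits a bounded neighborhood of $A$ essentially at the projections of its endpoints. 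We choose $\xi$ to be a large constant depending on $\delta$, the uniform quasi-convexity constant $Q$ of the family $\bY$, and $\theta$; in particular $\xi \ge \theta$, which gives (P0) instantly.

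For (P1), suppose $X,Y,Z$ are distinct with $d_Y(X,Z) > \xi$. Pick $x \in X$ and $z \in Z$. Since $\diam \pi_Y(X), \diam \pi_Y(Z) \le \theta$, we have $d(\pi_Y(x), \pi_Y(z)) \ge d_Y(X,Z) - 2\theta \gg D$, so by the fact above the geodesic $[x,z]$ contains a subsegment $[p,q]$ in the $C$-neighborhood of $Y$ with $d(p, \pi_Y(x)) \le C$. This means a point of $Y$ lies within $C$ of a point on the geodesic $[x,z]$, and that point separates $x$ from the bulk of $[x,z]$ heading toward $z$. Projecting to $X$ and using that nearest-point projection to a uniformly quasi-convex set is coarsely Lipschitz along a geodesic, we get that $\pi_X(p)$ is within a bounded constant of both $\pi_X(Y)$ (since $p$ is near $Y$) and $\pi_X(z)$ (since after leaving the neighborhood of $X$, the projection of $[x,z]$ to $X$ is essentially constant). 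Combined with $\diam \pi_X(Z) \le \theta$, this forces $d_X(Y, Z)$ to be bounded by a constant depending only on $\delta, Q, \theta$, which we absorb into $\xi$.

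For (P2), fix distinct $X, Z \in \bY$ and points $x \in X$, $z \in Z$; let $L = d(x,z)$. For each $Y$ with $d_Y(X,Z) > \xi$, the same argument produces a subsegment of $[x,z]$ of length at least $d_Y(X,Z) - 2C - 2\theta \ge \xi/2$ lying in the $C$-neighborhood of $Y$. The essential point is that these subsegments are coarsely disjoint for distinct $Y$: if two such subsegments, corresponding to $Y_1 \ne Y_2$, overlapped significantly, then a point of $Y_1$ would be within $2C$ of a point of $Y_2$, which (via the projection estimate above applied to $\pi_{Y_1}$ and $\pi_{Y_2}$) would violate the version of (P1) just established. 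Hence the number of such $Y$ is at most $L/(\xi/2) + O(1)$, which is finite. The main obstacle is the disjointness argument for (P2): one must carefully convert the geometric fact that two long near-$Y_i$ subsegments of a geodesic cannot coexist into the Behrstock-style statement already provided by (P1), and keep track of all the additive constants so that a single $\xi = \xi(\delta, Q, \theta)$ works simultaneously for all three axioms.
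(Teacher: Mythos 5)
Your proof is correct and takes essentially the same route as the paper: both arguments reduce everything to the geometry of a geodesic joining $X$ to $Z$, use the contracting property of nearest-point projection to quasi-convex sets to show that each $Y$ with $d_Y(X,Z)>\xi$ forces a fellow-traveled subsegment of that geodesic of length coarsely $d_Y(X,Z)$, and then obtain (P2) from the coarse disjointness of these subsegments. The one imprecision is in your disjointness step for (P2): a single point of $Y_1$ lying within $2C$ of $Y_2$ contradicts nothing — what is needed (and what the paper invokes) is that a \emph{long} common overlap forces $\diam \pi_{Y_1}(Y_2)>\theta$, contradicting the hypothesis on the projections rather than the already-established (P1).
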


\begin{proof}
Axiom (P0) holds with constant $\theta$ by assumption. Given $X$ and
$Z$ in $\bY$ let $\gamma$ be a shortest geodesic from $X$ to $Z$. Then
for any other $Y\in \bY$ the nearest point projection of $Y$ to
$\gamma$ will have diameter uniformly close to $d_Y(X,Z)$. To see this
let $\alpha$ and $\beta$ be shortest paths between $X$ and $Y$ and
between $Y$ and $Z$, respectively. The right endpoint of $\alpha$ will
lie in $\pi_Y(X)$ and the left endpoint of $\beta$ will lie in
$\pi_Y(Z)$. Let $\gamma'$ be a shortest path connecting these
endpoints. If $\gamma'$ is sufficiently long, when we concatenate
these three geodesics we get a quasi-geodesic with coarsely the same
endpoints as $\gamma$ so it will fellow travel $\gamma$. By
construction the nearest point projection of $Y$ to the quasi-geodesic
will be coarsely $\gamma'$ (whose diameter is roughly $d_Y(X,Z)$) so
the nearest point projection of $Y$ to $\gamma$ will also roughly be
$d_Y(X,Z)$.

This directly implies (P1) since the distance from $Z$ to $\alpha$ will be coarsely bounded below by $d_Y(X,Z)$ and hence, if this term is large, the strongly contracting property of $\delta$-hyperbolic space implies that the projection of $Z$ to $\alpha$ has uniformly bounded diameter. By the above assertion this diameter is coarsely $d_Z(X,Y)$ so this quantity is  also bounded. 

For (P2) if $X, Z, Y_1, \dots,Y_n$ are all in $\bY$ with $d_{Y_i}(X,Z)$ large then the sum of the projections is bounded by, say, twice the distance between $X$ and $Z$. If not there would be a $Y_i$ and $Y_j$ whose projection to $\gamma$ have large diameter overlap which would imply that $\diam \pi_{Y_i}(Y_j)>\theta$.
\end{proof}

\noindent
{\bf Quasi-geodesics.} A quasi-geodesic is a subspace of a metric space that is quasi-isometric to $\ZZ$. For our purposes it will be convenient to assume that quasi-geodesics are a collection of bi-infinite paths parameterized by arc length. In the proof of Theorem \ref{thm1} our quasi-geodesics will be a single bi-infinite path. However, when we discuss mapping class groups we will need quasi-geodesics that are finite union of paths.

\bigskip

We now prove a sequence of technical results that will be needed in what follows. 

We begin with a general setup:
\begin{itemize}
\item $\mathcal Y$ is a $\delta$-hyperbolic geodesic metric space. For convenience will assume that $\mathcal Y$ is a metric graph with edges of length one.

\item $\tilde\A$ is a collection of quasi-geodesics in $\mathcal Y$ with uniform constants.

\item $\A\subset \tilde\A$ is a sub-collection.

\item For each distinct $\alpha, \beta\in\A$ the projection $\pi_\alpha(\beta)$ is a subset of $\alpha$ that is uniformly close (in the Hausdorff metric) to the nearest point projection.

\end{itemize}
 We will refer to $\delta$, the quasi-geodesic constants and the Hausdorff bound on the distance of the projections from the nearest point projections as the {\em coarse constants}.

The following is a direct consequence of Proposition \ref{hyperbolic axioms}.
\begin{thm}\label{hyperbolic projections}
Fix $\theta$. Then there exists $\xi$, depending only on the coarse constants and $\theta$, such that if $\diam \pi_\alpha(\beta) \le \theta$ for all distinct $\alpha$ and $\beta$ in $\A$ then $(\A, \{\pi_\gamma\})$ satisfies the projection axioms with projection constant $\xi$.
\end{thm}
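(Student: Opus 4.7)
The plan is to derive this from Proposition \ref{hyperbolic axioms} essentially as a corollary, modulo two technical discrepancies between the two statements: (a) here the subspaces are quasi-geodesics rather than general quasi-convex subspaces, and (b) here the projections $\pi_\alpha$ need not be exactly nearest-point projections but only uniformly Hausdorff-close to them.

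First I would reduce (a) to the setup of Proposition \ref{hyperbolic axioms} by noting that in a $\delta$-hyperbolic space any quasi-geodesic (with uniform constants) is quasi-convex with constant depending only on the coarse constants. So the collection $\A$ qualifies as a collection of uniformly quasi-convex subspaces of $\mathcal Y$.

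Next I would handle (b) as follows. Let $H$ be the Hausdorff bound between $\pi_\alpha$ and the nearest-point projection $\pi'_\alpha$, which by hypothesis depends only on the coarse constants. The assumption $\diam \pi_\alpha(\beta)\leq\theta$ then implies $\diam \pi'_\alpha(\beta)\leq\theta+2H$. Thus the collection $(\A,\{\pi'_\alpha\})$ satisfies the hypotheses of Proposition \ref{hyperbolic axioms} with bound $\theta+2H$, and hence satisfies the projection axioms with some constant $\xi'$ depending only on the coarse constants and $\theta$. To transfer this conclusion back to $\pi_\alpha$, observe that for any $X,Y,Z\in\A$ the quantity $d_Y(X,Z) = \diam(\pi_Y(X)\cup\pi_Y(Z))$ differs from the analogous quantity defined via $\pi'$ by at most $2H$. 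So (P0), (P1) and (P2) for $\pi'$ with constant $\xi'$ immediately translate into the same axioms for $\pi$ with constant $\xi := \xi' + 4H$ (or any similar additive adjustment; the change in a few constants in the hypothesis and conclusion of each axiom is absorbed here).

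I do not expect any serious obstacles. The one mildly subtle point is to check that (P1) and (P2) are genuinely stable under a bounded Hausdorff perturbation of the projection maps, but this is immediate from the triangle inequality applied to the definition of $d_Y(X,Z)$. The final constant $\xi$ depends only on the coarse constants of Proposition \ref{hyperbolic axioms} applied at threshold $\theta+2H$, together with $H$ itself, and hence only on the coarse constants of the setup and on $\theta$, as required.
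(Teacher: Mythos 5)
Your argument is exactly the derivation the paper intends: the authors simply assert that the theorem is "a direct consequence of Proposition \ref{hyperbolic axioms}", and your two reductions (uniform quasi-geodesics are uniformly quasi-convex via the Morse lemma, and all the quantities $d_Y(X,Z)$ change by at most $2H$ under a Hausdorff-$H$ perturbation of the projections, so (P0)--(P2) survive with an additive adjustment of the constant) are precisely the details being elided. The proposal is correct and takes the same route as the paper.
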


The following proposition is the main estimate we need to approximate lengths in $\mathcal Y$ using our quasi-trees.

\begin{prop}\label{main estimate}
Fix constants $R,K>0$.
Then there exists an $L>0$, depending only on the coarse constants and $K$, such that the following holds. Assume that
\begin{itemize}
\item any path of length $L$ in some $\alpha\in \tilde\A$ is contained in some $\gamma\in\A$;

\item $\A$ is partitioned into $\A_1\sqcup\dots\sqcup\A_n$;

\item for all $x,y\in\mathcal Y$  there is an $\alpha \in \tilde\A$ that intersects the $R$-neighborhood of both $x$ and $y$;

\item $\hat x =\{x_1, \dots, x_n\}$ and $\hat y=\{y_1,\dots, y_n\}$ are $n$-tuples of vertices in $\mathcal Y$ that are contained in the $R$-neighborhoods of $x$ and $y$, respectively.

\end{itemize}
Then
$$d_{\mathcal Y}(x,y)  \le 2\sum_i\sum_{\gamma\in\A_i} d_\gamma(x_i,y_i)_K +L+2R.$$
\end{prop}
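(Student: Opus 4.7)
The plan is to use the hypothesis to find a single $\alpha \in \tilde\A$ passing within $R$ of both $x$ and $y$, and to bound the length of the relevant subpath of $\alpha$ by counting how many length-$L$ subpieces fall into various $\gamma \in \A$. Concretely, choose such an $\alpha$ and points $x', y' \in \alpha$ within $R$ of $x, y$ respectively, and let $\beta$ be the subpath of $\alpha$ from $x'$ to $y'$. Since $\alpha$ is parameterized by arc length, $d_\mathcal Y(x,y) \le \operatorname{length}(\beta) + 2R$, so it suffices to prove $\operatorname{length}(\beta) \le 2\sum_i\sum_{\gamma\in\A_i} d_\gamma(x_i,y_i)_K + L$.

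Subdivide $\beta$ into $m'$ consecutive subpaths of length exactly $L$ plus a remainder of length less than $L$, so that $\operatorname{length}(\beta) \le (m'+1)L$. Each length-$L$ piece $p_j$ is, by hypothesis, contained in some $\gamma_j \in \A$; let $i(\gamma)$ be the unique index with $\gamma \in \A_{i(\gamma)}$, and set $k_\gamma := \#\{j : \gamma_j = \gamma\}$, so that $\sum_{\gamma} k_\gamma = m'$.

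The heart of the proof is a coarse lower bound
$$d_\gamma(x_{i(\gamma)}, y_{i(\gamma)}) \ge k_\gamma L - C$$
valid for each $\gamma \in \A$ with $k_\gamma \ge 1$, where $C$ depends only on the coarse constants and $R$. The $k_\gamma$ pieces of $\gamma$ contribute a total arc length of at least $k_\gamma L$ along $\gamma$; by the Morse lemma any geodesic from $x$ to $y$ stays close to $\alpha$ and hence close to each such piece; so the nearest-point projection of $\{x, y\}$ onto the quasi-convex subset $\gamma$ has diameter at least $k_\gamma L - O(1)$. Coarse Lipschitzness of nearest-point projection in $\mathcal Y$ then lets us replace $x, y$ by the nearby $x_{i(\gamma)}, y_{i(\gamma)}$ at the cost of enlarging the additive error.

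Choosing $L$ large enough that $L - C \ge \max(K, L/2)$ (e.g.\ $L \ge 2(K+C)$), we obtain $d_\gamma(x_{i(\gamma)}, y_{i(\gamma)})_K \ge k_\gamma L / 2$ whenever $k_\gamma \ge 1$. Summing over $\gamma \in \A$ gives $m' L / 2 \le \sum_i \sum_{\gamma \in \A_i} d_\gamma(x_i, y_i)_K$, whence
$$d_\mathcal Y(x,y) \le \operatorname{length}(\beta) + 2R \le m' L + L + 2R \le 2\sum_i\sum_{\gamma\in\A_i} d_\gamma(x_i, y_i)_K + L + 2R,$$
as required. The main obstacle is making the coarse lower bound precise when several pieces share a single $\gamma$: one must verify that the projection of $\{x, y\}$ onto $\gamma$ captures the full arc-length contribution of all such pieces up to additive error, which is a routine (if fiddly) application of fellow-travel and nearest-point-projection estimates in $\delta$-hyperbolic geometry.
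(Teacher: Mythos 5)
Your overall architecture (run along a single $\alpha$ near both points, chop into length-$L$ pieces, use the hypothesis to put each piece into some $\gamma\in\A$, and convert coverage into a lower bound on projection distances) is the same as the paper's, but your accounting has a genuine flaw. The displayed inequality $d_\gamma(x_{i(\gamma)}, y_{i(\gamma)}) \ge k_\gamma L - C$ compares an \emph{arc length} ($k_\gamma L$) with a \emph{diameter in $\mathcal Y$} ($d_\gamma$ is the diameter of a union of nearest-point projections). For a $(\lambda,c)$-quasi-geodesic parameterized by arc length the correct bound is only $d_\gamma \ge \tfrac{1}{\lambda}k_\gamma L - C$, and this multiplicative loss propagates: your final chain gives $d_{\mathcal Y}(x,y)\le 2\lambda\sum(\cdots)+\dots$ rather than the stated constant $2$, and no choice of $L$ repairs it when $\lambda\ge 2$. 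The paper sidesteps this entirely by never returning to arc length at the end: it covers the (trimmed) segment $\tilde\alpha$ by the \emph{full} intersections $\tilde\alpha\cap\gamma_i$ for \emph{distinct} $\gamma_i$, uses that a connected set covered by finitely many sets has diameter at most the sum of their diameters to get $d_{\mathcal Y}(x,y)\le\sum_i\diam(\tilde\alpha\cap\gamma_i)+2R$, and then needs only the single clean estimate $\diam(\tilde\alpha\cap\gamma_i)\le 2\,d_{\gamma_i}(x_{j_i},y_{j_i})_K$; distinctness of the $\gamma_i$ is what lets each term appear at most once in the double sum, playing the role of your multiplicity count $k_\gamma$.

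A second, related problem: you let $C$ (hence $L\ge 2(K+C)$) depend on $R$, because you transfer the projection from $x,y$ to $x_{i(\gamma)},y_{i(\gamma)}$ by coarse Lipschitzness, paying an additive $O(R)$. The proposition requires $L$ to depend only on the coarse constants and $K$, and this matters in the application, where $R$ is chosen \emph{after} $L$ (the diameter bound on $\hat x\cup\{id\}$ and the density constant for axes are fixed only once the preferred axes, hence $L$, are already set). The paper gets $R$-independence by trimming $\tilde\alpha$ so that it is disjoint from the $R$-neighborhoods of $x$ and $y$ and invoking the strong contraction property: the projection of the entire $R$-neighborhood of $x$ onto any subpath of $\tilde\alpha$ lands in a uniformly bounded neighborhood of the near endpoint, with a bound independent of $R$. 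Both defects are repairable, but as written your proof establishes a weaker inequality with constants that break the intended order of quantifiers.
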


\begin{proof}
We can assume that $d_\mathcal Y(x,y) \ge2R$. 
Choose an $\alpha \in \tilde\A$ that intersects the $R$-neighborhood of both $x$ and $y$. Let $\tilde\alpha$ be the subpath of $\alpha$ between $x$ and $y$ that is disjoint  from the $R$-neighborhoods of $x$ and $y$ but whose endpoints are exactly $R$ from $x$ and $y$. As geodesics (and hence quasi-geodesics) are strongly contracting in a $\delta$-hyperbolic space, the projection of the $R$-neighborhood of $x$ to any subpath of  $\tilde\alpha$ will be contained in a uniformly bounded neighborhood of the endpoint of the path closest to $x$ (with the equivalent statement holding for the $R$-neighborhood of $y$).
Therefore, for all $\beta\in\A$ that intersect $\tilde\alpha$ and all $x',y'\in\mathcal Y$ with $d_\mathcal Y(x,x'), d_\mathcal Y(y,y') \le R$, we have that
$$\diam(\tilde\alpha\cap\beta)-d_\beta(x',y')$$
will be bounded above by a constant that only depends only on the coarse constants (and not  on $R$). Using that quasi-geodesics in $\tilde\A$ have uniform constants, if $\tilde\alpha\cap\beta$ contains a subpath of sufficient path length then $\diam(\tilde\alpha\cap\beta)$ will be large. When the diameter is large we can absorb the above additive error into a multiplicative one. Therefore there exists an $L>0$ such that if $\tilde\alpha\cap\beta$ contains a path of length $L$ then
\begin{itemize}
\item $\diam(\tilde\alpha\cap\beta) \le 2d_\beta(x',y')$ and
\item $\diam(\tilde\alpha\cap\beta) \ge 2K.$
\end{itemize}
Combining these estimates we have
$$\diam(\tilde\alpha\cap\beta) \le 2d_\beta(x,y)_K$$
if $\tilde\alpha\cap\beta$ contains a path of length $L$.

If $d_{\mathcal Y}(x,y)\ge L+2R$ then $\tilde\alpha$ will be a path of length at least $L$ and by the choice of $\A$ we can find distinct axis $\gamma_1,\dots, \gamma_m$ in $\A$ such that each $\gamma_i\cap \tilde\alpha$ contains a segment of length $L$ and the union of the intersections is all of $\tilde\alpha$. We let $\A_{j_i}$ be the subcollection in the partition of $\A$ that contains $\gamma_i$. Using the above estimate we then have
\begin{eqnarray*}
d_{\mathcal Y}(x,y)&\le& \sum_i \diam(\tilde\alpha\cap\gamma_i) + 2R\\
&\le & 2 \sum_i d_{\gamma_i}(x_{j_i},y_{j_i})_K +2R\\
&\le& 2\sum_i\sum_{\gamma\in\A_i} d_\gamma(x_i,y_i)_K +2R.
\end{eqnarray*}
If $d_\mathcal Y(x,y) < L +2R$ then the sum in the inequality may be zero. However, if we add $L$ to the right then the inequality will still hold in this case completing the estimate.
\end{proof}

We now assume that $G$ acts isometrically on $\mathcal Y$ and that $\tilde\A$ is $G$-invariant. The action is {\em acylindrical} if for any $\epsilon>0$ there exists $D,B>0$ such that if $x,y, x', y'\in\mathcal Y$ with $d_\mathcal Y(x,y)>D$ then the set
$$\{g\in G| d_\mathcal Y(x',gx), d_\mathcal Y(y', gy) \le \epsilon\}$$
has at most $B$ elements. This is slightly different than the usual definition where one assumes that $x=x'$ and $y=y'$. It is not hard to check that the two definitions are equivalent.

In the proof of Theorem \ref{thm2} we will consider the action of the
mapping class group on the curve graphs of essential subsurfaces. For
this reason we will need to consider actions where there is  a large kernel. In particular assume that $\tilde G$ acts on $\mathcal Y$  and $G$ is the image of $\tilde G$ in the isometry group of $\mathcal Y$. If the kernel of the quotient map $\tilde G\to G$ is infinite then the action of $\tilde G$ cannot be acylindrical. However, $G$ may act acylindrically on $\mathcal Y$ in which case we say that the action of $\tilde G$ has {\em acylindrical image}.

Let $\gamma \in \tilde\A$ be the axis of an element $g$ that acts
hyperbolically on $\mathcal Y$.

We
let $\tilde C(\gamma)$ be the subgroup of $\tilde G$ that fixes
$\gamma$, up to bounded Hausdorff distance, and $\A_\gamma$ the 
equivalence classes, with respect to bounded Hausdorff distance,
of the $\tilde G$-translates of $\gamma$. 
There
is a natural bijection between the set of left cosets of $\tilde
C(\gamma)$ and $\A_\gamma$. The group $\tilde G$ acts on
$\A_\gamma$.
Accordingly, we need to define (the diameter of) the projection between the equivalence
classes: 
$$\diam \pi_{[\gamma]}([\beta])= \sup_{\gamma' \in [\gamma],
\beta' \in [\beta]} \diam \pi_{\gamma'}(\beta').$$
Since the quasi-geodesic constants are uniformly bounded, 
the difference between $\diam \pi_{[\gamma]}([\beta])$
and $\diam \pi_{\gamma}(\beta)$ is uniformly bounded. 

It will be convenient to assume that
if $h\in \tilde G$ and $\gamma$ and $h(\gamma)$ are a bounded
Hausdorff distance from each other then $h(\gamma) = \gamma$.
So, we will work with this assumption, rather than 
work with the equivalence classes and the modified 
projection, which could be done with bounded modification 
of the constants in the argument. 

 If $g\in\tilde C(\gamma)$ then, as $g$ fixes $\gamma$ (under our assumption),
for any $\beta\in\A_\gamma$ we have that $\diam \pi_\gamma(\beta) =
\diam \pi_\gamma(g(\beta))$. In particular, any two axes that are
translates of $\gamma$ by elements in
the same double coset of $\tilde C(\gamma)$ have projections to
$\gamma$ with the same diameter.

\begin{prop}\label{acylind_finite}
If $\tilde G$ acts on $\mathcal Y$ with acylindrical image then there exists a $\theta>0$, depending only on the coarse constants and the acylindrical constants, such that only finitely many double cosets of $\tilde C(\gamma)$ have projection to $\gamma$ of diameter $>\theta$.
\end{prop}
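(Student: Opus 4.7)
The plan is to argue by contradiction: if infinitely many double cosets had projection of diameter $>\theta$, then by normalization and pigeonhole we would produce infinitely many elements of $G$ each approximately fixing a pair of points on $\gamma$ at distance $\approx\theta$, contradicting acylindricity of the $G$-action.

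First I would convert a large projection into fellow-traveling in $\mathcal Y$. By the standard Morse property of quasi-convex subspaces in a $\delta$-hyperbolic space, if $\theta$ is large in terms of the coarse constants and $\diam \pi_\gamma(h\gamma)>\theta$, then $h\gamma$ passes within a bounded distance $C_0$ of a subsegment $[a,b]\subset\gamma$ of length at least $\theta-C_0$; in particular there exist $q_1,q_2\in h\gamma$ with $d(a,q_1),d(b,q_2)\le C_0$.

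Next I would exploit the bilateral action of $\tilde C(\gamma)$ to pick a good representative in each double coset. Since $g\in\tilde C(\gamma)$ translates $\gamma$ by $\tau(g)>0$, left-multiplying $h$ by a suitable power of $g$ shifts $a$ into a fundamental interval $[*,*+\tau(g)]$ for $\langle g\rangle$ acting on $\gamma$; this changes the representative but not the axis $h\gamma$. Right-multiplying $h$ by a power of $g$ leaves the axis $h\gamma$ unchanged but modifies the map $h$, allowing us to shift $\tilde q_1:=h^{-1}(q_1)\in\gamma$ into the same fundamental interval. After both normalizations, each double coset with projection $>\theta$ has a representative $h$ for which $a,\tilde q_1\in[*,*+\tau(g)]$ and $d(h(\tilde q_1),a)\le C_0$.

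Now suppose for contradiction that infinitely many distinct double cosets exist with normalized representatives $h_1,h_2,\ldots$. By pigeonhole on the bounded interval $[*,*+\tau(g)]$, an infinite subsequence satisfies $a_i\to p$ and $\tilde q_{i,1}\to\tilde p$ within any prescribed $\epsilon>0$. A further subsequence fixes the direction in which the fellow-travel region and its $h$-preimage extend, so the companion points $b_i$ and $\tilde q_{i,2}$ cluster near fixed points $p'$ and $\tilde p'$ at distance $\approx\theta$ from $p$ and $\tilde p$. Thus each $h_i$, viewed as an element of $G$, sends the pair $(\tilde p,\tilde p')$ into uniform $(\epsilon+C_0)$-neighborhoods of $(p,p')$. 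Distinct double cosets in $\tilde G$ give distinct images in $G$, because the kernel of $\tilde G\to G$ acts trivially on $\mathcal Y$ and in particular lies in $\tilde C(\gamma)$. Choosing $\theta$ larger than the acylindricity threshold $D(\epsilon+C_0)$ then contradicts acylindricity of the $G$-action.

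The main obstacle is keeping $\theta$ independent of the translation length $\tau(g)$, which is a priori unbounded. This is resolved by the order of operations: the normalization produces only a $\tau(g)$-sized window, but passing to an infinite subsequence via pigeonhole converts this into an $\epsilon$-sized cluster for any prescribed $\epsilon>0$, so one can fix $\epsilon$ depending only on the coarse constants (e.g.\ $\epsilon=\delta$) and then invoke acylindricity with the fixed input $\epsilon+C_0$, giving $\theta$ that depends only on the coarse and acylindrical constants.
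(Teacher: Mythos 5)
Your proposal is correct and follows essentially the same route as the paper: both use the two-sided $\tilde C(\gamma)$-action to normalize the double-coset representative so that the fellow-traveling data lands in a fixed fundamental-domain-sized window on $\gamma$, then invoke the modified acylindricity condition (with $x'\neq x$ allowed) together with a finiteness/pigeonhole count over that window, after observing that the kernel of $\tilde G\to G$ lies in $\tilde C(\gamma)$ so distinct double cosets give distinct isometries. The only cosmetic differences are that the paper runs the count directly rather than by contradiction, and you should take $b$ at a fixed distance $\approx\theta$ along the projection segment from $a$ (rather than at its far endpoint, which may be arbitrarily far) so that the companion points indeed cluster.
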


\begin{proof}
First we can replace $\tilde G$ with its image $G$ in the isometry group of $\mathcal Y$. This is because the subgroup $C(\gamma)$ of $G$ that fixes $\gamma$ will be the image of $\tilde C(\gamma)$ under the quotient map $\tilde G\to G$ and the kernel of this quotient map will be also be the kernel of the quotient map $\tilde C(\gamma) \to C(\gamma)$. Therefore the quotient map $\tilde G\to G$ induces a bijection between the double cosets of $\tilde C(\gamma)$ in $\tilde G$ and of $C(\gamma)$ in $G$.

There is an $\epsilon>0$, only depending on the coarse constants, such that if $\alpha, \beta \in \A$ then the difference between the diameter of $\pi_\alpha(\beta)$ and the diameter of the intersection of $\beta$ with the $\epsilon$-neighborhood of $\alpha$ is uniformly bounded. Let $D>>\epsilon$ be the acylindricity constant for $\epsilon$. Let $\tilde\gamma$ be a finite subpath whose diameter is at least $4D$ and that contains at least two copies of a fundamental domain for the $C(\gamma)$ action on $\gamma$.

Assume that for $g\in C(\gamma) h C(\gamma)$ the translate $g(\gamma)$
has large projection to $\gamma$ where ``large'' roughly means at
least $2D$.
Then we can assume that the coset representative $h$ has been chosen such that there is subpath $\gamma_h$ of $\tilde\gamma$ such that $h(\gamma_h)$ is contained in the $\epsilon$-neighborhood of $\tilde\gamma$ and $\diam \gamma_h \ge 2D$. This implies that the endpoints of $h(\gamma_h)$ will be contained in the $\epsilon$-neighborhood of two vertices $x_h$ and $y_h$ of $\tilde\gamma$ with $d_\mathcal Y(x_h, y_h) \ge D$. Note that there are finitely many triples $\{x,y,\alpha\}$ where $x,y\in \tilde\gamma$ with $d_\mathcal Y(x,y) \ge D$ and $\alpha$ is a subpath of $\tilde\gamma$ of diameter $\ge 2D$. By acylindricity for each triple $\{x,y,\alpha\}$ there are finitely many $h$ such that $x_h = x$, $y_h = y$ and $\gamma_h = \alpha$. This implies that there are finitely many double cosets with projection roughly larger than $2D$.
\end{proof}

\subsection{Axes}\label{2.2}
By the induction in Section \ref{induction} we may replace $G$ by a
finite index subgroup.
Thus by residual finiteness we may assume $G$ is torsion free (recall that
hyperbolic groups contain finitely many conjugacy classes of torsion
elements). In particular, if $\<g\>$ is a maximal cyclic subgroup,
then the centralizer (and also normalizer) of $g$ is $\<g\>$
itself, which is therefore separable by Lemma \ref{separability}.

The following is surely well known. We summarize the proof.
\begin{thm}\label{quasi-axes}
Let $G$ be a torsion free $\delta$-hyperbolic group and $\Gamma(G)$ a Cayley graph for some finite generating set. Then there exists a $G$-invariant collection $\tilde\A$ of axes of maximal cyclic subgroups where the axes are uniform quasi-geodesics. Furthermore any $x$ and $y$ are within uniform distance $R$ of an axis in $\tilde\A$.
\end{thm}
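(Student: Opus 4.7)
The plan is to define one quasi-axis per conjugacy class of maximal cyclic subgroup and then take all $G$-translates. Since $G$ is torsion free and hyperbolic, every nontrivial element $g$ has infinite order and acts loxodromically on $\Gamma(G)$; as recalled just before the statement, the centralizer of $g$ equals the unique maximal cyclic subgroup $\langle g_0\rangle$ containing it. Pick a vertex $x_0\in G$ minimizing the displacement $d(x_0,g_0x_0)$, which exists because this function takes integer values and is bounded below. Set
\[
\gamma_{g_0}=\bigcup_{n\in\ZZ}g_0^n\cdot\sigma,
\]
where $\sigma$ is a $\Gamma(G)$-geodesic from $x_0$ to $g_0x_0$; this is $\langle g_0\rangle$-invariant by construction. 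Choose one such $\gamma_{g_0}$ per conjugacy class of maximal cyclic subgroup and extend equivariantly by declaring the axis of $hg_0h^{-1}$ to be $h\cdot\gamma_{g_0}$. Because the centralizer of $g_0$ is $\langle g_0\rangle$ itself, the $G$-translates of $\gamma_{g_0}$ are parametrized by the coset space $G/\langle g_0\rangle$ and the assignment is well defined, producing a $G$-invariant collection $\tilde\A$.

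Uniform quasi-geodesic constants across $\tilde\A$ follow from the Delzant--Gromov lower bound $\tau(g_0)\ge\tau_0>0$ on stable translation lengths in a torsion-free hyperbolic group, combined with the standard inequality $d(x_0,g_0x_0)\le\tau(g_0)+O(\delta)$ coming from the minimality of $x_0$. Parametrizing each $\gamma_{g_0}$ by arc length, the ratio of arc length to $\Gamma(G)$-distance between orbit points is at most $1+O(\delta)/\tau_0$, giving uniform $(K,C)$-quasi-isometric embeddings of $\R$.

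The main step is coverage: for arbitrary vertices $x,y\in\Gamma(G)$, exhibit a single axis in $\tilde\A$ that comes within a uniform distance $R$ of both. The plan is to extend a geodesic $[x,y]$ to a much longer geodesic $\bar\sigma=[p,q]$ with $d(p,x),d(q,y)$ exceeding a threshold $L_0$ that depends only on $\delta$ and the generating set, and then apply a pigeonhole argument to produce a hyperbolic element that translates along $\bar\sigma$. Concretely, label consecutive bounded-length blocks of $\bar\sigma$ by the pair (\emph{$G$-orbit of the basepoint}, \emph{oriented direction of $\bar\sigma$ there}); since this label lives in a finite set, for $\bar\sigma$ long enough two blocks share a label, and the $h\in G$ matching them sends an initial oriented subpath of $\bar\sigma$ close to a later one with the same orientation. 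A standard thin-triangle argument (the usual $\delta$-hyperbolic criterion: if $h$ sends a long oriented subsegment of a geodesic close to a later oriented subsegment with matching orientation, then $h$ is loxodromic with axis fellow-travelling the geodesic between them) then yields that $h$ is loxodromic with $\mathrm{Ax}(h)$ uniformly close to $\bar\sigma$ throughout the portion containing $[x,y]$. Passing from $h$ to a generator of the maximal cyclic subgroup containing it perturbs the axis by a bounded Hausdorff distance, so the sought axis lies in $\tilde\A$.

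The principal obstacle is the coverage step: the pigeonhole must record both the $G$-orbit of the basepoint \emph{and} the oriented direction of $\bar\sigma$, so that the element $h$ is forced to act on $\bar\sigma$ in an orientation-preserving manner and hence to be loxodromic with an axis that actually tracks $\bar\sigma$ near $[x,y]$. Without the orientation data, $h$ could be elliptic or could be loxodromic with axis transverse to $\bar\sigma$, and the uniformity of $R$ would break.
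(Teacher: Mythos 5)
Your construction of $\tilde\A$ and the equivariant extension to conjugates match the paper's, and your uniformity argument (via the uniform positive lower bound $\tau_0$ on stable translation lengths) is an acceptable variant of the paper's appeal to \cite[7.2C]{G}; the only caveat there is that for elements of very short translation length the ratio estimate alone does not give a uniform additive constant, which the paper sidesteps by noting that only finitely many elements have $|g|\le 1000\delta$ and treating those separately. The genuine problem is in the coverage step.

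Your pigeonhole gives you two blocks of $\bar\sigma$ with the same label, but it gives you \emph{no control over where along $\bar\sigma$ those blocks sit}. The element $h=v_{jM}v_{iM}^{-1}$ you obtain is indeed loxodromic (the Gromov product $(v_{iM},h^2v_{iM})_{v_{jM}}$ is $O(\delta)$ because the two geodesics leave $v_{jM}$ in opposite directions along $\bar\sigma$), but its axis is only forced to fellow-travel $\bar\sigma$ on the segment between the two matched blocks; beyond $v_{jM}$ the quasi-geodesic $\{h^n v_{iM}\}$ follows the $h$-orbit, not $\bar\sigma$. If both matched blocks land in the prefix $[p,x]$, the axis need not come anywhere near $[x,y]$, and pigeonhole cannot force one matched block into $[p,x]$ and the other into $[y,q]$ (all blocks of $[p,x]$ could carry one label and all blocks of $[y,q]$ another). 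So the claim ``$\mathrm{Ax}(h)$ uniformly close to $\bar\sigma$ throughout the portion containing $[x,y]$'' does not follow, and the uniform $R$ is not established. (A secondary issue: geodesics in a Cayley graph need not be extendable, so the existence of $\bar\sigma=[p,q]$ prolonging $[x,y]$ on both sides also needs an argument.) The paper avoids all of this by quoting Gromov's double-density theorem \cite[8.2G]{G} --- the pairs of endpoints of axes of infinite-order elements are dense in $\partial\Gamma(G)\times\partial\Gamma(G)$ --- which is the standard tool here and whose proof genuinely uses boundary dynamics (e.g.\ products of high powers of independent loxodromics), not just counting. To repair your write-up you should either cite that density statement, as the paper does, or give an argument that actually produces a loxodromic element translating from (near) $p$ to (near) $q$ across $[x,y]$.
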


\begin{proof}
 Let $|g|$ be the word norm with respect to the chosen generating set.
 In each conjugacy class of maximal
cyclic subgroups choose a representative $\<g\>$ with $|g|$ minimal
possible. Define the axis $\gamma_g$ as the union of $g$-translates of a geodesic segment from $1$ to $g$ and we assume that $\gamma_g = \gamma_{g^{-1}}$. We then extend the
definition to the conjugates: $\gamma_{aga^{-1}}=a\gamma_g$ (this is
well-defined by the remark about normalizers). These are
axes of indivisible elements; each $g$ acts by translation on its
axis. 
Let $\tilde\A$ be the collection of all such axes.
Moreover, it is a well-known fact that each axis is a quasi-geodesic with uniform
constants. Indeed, if $|g|$ is large compared to $\delta$, 
say, $|g| > 1000 \delta$ then there are uniform constants depending only on $\delta$ by \cite[7.2C]{G}.
Now, there are only finitely many elements $g$ with $|g| \le 1000 \delta$, 
so the claim follows. 
It is also standard that there exists a constant $R$ such
that for any two elements $x,y\in G$ there exists an axis $\gamma_g$
that intersects the $R$-balls centered at $x$ and $y$.
This is a consequence of the fact \cite[8.2G]{G} that the set of pairs
$(\gamma_g^{\infty}, \gamma_g^{-\infty}) \in \partial \Gamma(G) \times \partial \Gamma(G)$
for all $g$ of infinite order is dense in $\partial \Gamma(G) \times \partial \Gamma(G)$.
\end{proof}
\subsection{Constants}
We can now fix constants. The action of a group on its Cayley graph is proper and therefore acylindrical. By Proposition \ref{acylind_finite} there is a $\theta>0$ such that for any axis $\gamma\in \tilde \A$ there are only finitely many double cosets of $C(\gamma)$ that have projection to $\gamma$ with diameter $> \theta$. By Theorem \ref{hyperbolic projections}, there exist a $\xi'>0$ such that for any subcollection of $\tilde\A$ where the projections have diameter bounded by $\theta$, the subcollection satisfies the projection axioms with projection constant $\xi'$.
By Theorem \ref{strong axioms} the projections can be modified to satisfy the strong projection axioms with projection constant $\xi$ only depending $\xi'$. We then let $K=4\xi$ so that the distance formula, Proposition \ref{distfor2}, holds with threshold $K$. We then fix the segment constant $L$ to satisfy Proposition \ref{main estimate} for $K$.
\subsection{Preferred axes}\label{s:axes}
We now choose the $G$-finite and $G$-invariant collection of preferred axes $\A$
(or equivalently, conjugacy classes of indivisible elements).
We view the axes in $\tilde\A$ as a collection of bi-infinite words in the generators and for every word $x$ of length $\leq L$
choose, if possible, an element $\gamma_x\in \tilde\A$ such that $x$ is a subword of $\gamma_x$. Then
let $\A$ be the collection of $G$-orbits of the selected axes. Note that every such $x$ will not necessarily be a subword for an axis in $\A$ even if $x$ is a geodesic but every subword $x$ of length $\le L$ in an axis in $\gamma \in \tilde \A$ will be contained in an axis $\beta \in \A$ with $x \subset \gamma\cap \beta$.

\subsection{Coloring $\A$}\label{2.5}
Let $\gamma_1, \dots, \gamma_n$ represent the distinct $G$-orbits of axes in $\A$.  Then for each $\gamma_i$, $C(\gamma_i)$ is an infinite cyclic group and is its own centralizer. As $G$ is residually finite, by Lemma \ref{separability} the subgroup $C(\gamma_i)$ is separable. Given $h\not\in C(\gamma_i)$ there is a finite index subgroup of $G$ that contains $C(\gamma_i)$ but not $h$ and therefore doesn't contain the double coset $C(\gamma_i)h C(\gamma_i)$. Using Proposition \ref{acylind_finite} we can therefore find a finite index subgroup $H_i$ such that the projection between any two axes in the $H_i$-orbit of $\gamma_i$ (or the $H_i$-orbit of any axis in the $G$-orbit of $\gamma_i$) have diameter $\le \theta$.

Let
$$H' = H_{1}\cap \dots \cap H_{n}$$
and let $H$ be the intersection of the $G$-conjugates of $H'$.
Now add axes $\gamma_{n+1}, \dots, \gamma_m$ so that we have one axis in $\A$ for each $H$-orbit.
Let $\A_i$ be the $H$-orbit of $\gamma_i$. We then have:
\begin{cor}
There is a finite index subgroup $H$ of $G$ and a partition $\A_1 \sqcup \dots \sqcup \A_m$ of $\A$ such that each $\A_i$ is $H$-invariant and the projections between any two axes in a fixed $\A_i$ have diameter $\le \theta$.
\end{cor}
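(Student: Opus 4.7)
The plan is to verify the Corollary by unpacking the construction in Section \ref{2.5}. The partition and finite index of $H$ will be essentially by construction; the substantive content lies in the diameter bound.

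First, $H=H_1\cap\cdots\cap H_n$ is a finite intersection of finite index subgroups, hence of finite index in $G$. Since $[G:H]<\infty$, each of the $n$ original $G$-orbits $G\gamma_i$ breaks into finitely many $H$-orbits, so $m$ is finite. The sets $\A_i=H\gamma_i$ are $H$-invariant by definition, and they partition $\A$ because the $G$-orbits of the chosen representatives already partition $\A$, and each $G$-orbit is by construction the union of the $H$-orbits of the $\gamma_i$ assigned to it.

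For the diameter bound, fix $i$ and $\alpha,\beta\in\A_i$. Write $\alpha=h_1\gamma_i$ and $\beta=h_2\gamma_i$ with $h_1,h_2\in H$. By $G$-equivariance of the projections, $\diam\pi_\alpha(\beta)=\diam\pi_{\gamma_i}(h_1^{-1}h_2\gamma_i)$, and by the remark preceding Proposition \ref{acylind_finite} this depends only on the $C(\gamma_i)$-double coset of $h_1^{-1}h_2$. So I reduce to showing: for every $h\in H$, the double coset $C(\gamma_i)\,h\,C(\gamma_i)$ is \emph{good}, i.e.\ has projection of diameter $\le\theta$ to $\gamma_i$. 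When $i\le n$, Proposition \ref{acylind_finite} produces finitely many bad double cosets with representatives $h_1,\ldots,h_k\notin C(\gamma_i)$; separability of $C(\gamma_i)$ (Lemma \ref{separability}) together with the remark after that lemma provides, for each $j$, a finite index subgroup of $G$ containing $C(\gamma_i)$ and disjoint from $C(\gamma_i)\,h_j\,C(\gamma_i)$, and intersecting these is the $H_i$ of Section \ref{2.5}. Thus any $h\in H\subseteq H_i$ avoids every bad double coset.

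The main obstacle is the case $i>n$: here $\gamma_i=g\gamma_{j(i)}$ for some $g\in G$ and $j(i)\le n$, and by $g$-equivariance the diameter bound reduces to $g^{-1}hg$ lying in a good double coset of $C(\gamma_{j(i)})$. A priori, $g^{-1}Hg$ is not contained in $H_{j(i)}$. The fix is to replace each $H_i$ by its normal core in $G$, which remains of finite index (being the kernel of the action $G\to\mathrm{Sym}(G/H_i)$, of index at most $[G:H_i]!$). Then $H$ is normal in $G$, $g^{-1}Hg=H\subseteq H_{j(i)}$, and the argument of the previous paragraph applies verbatim.
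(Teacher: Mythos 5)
Your proof is correct and follows the same route as the paper: use Proposition \ref{acylind_finite} to get finitely many bad double cosets of $C(\gamma_i)$, use separability (Lemma \ref{separability} and the remark following it) to find a finite index subgroup avoiding them, intersect to get $H$, and split each $G$-orbit into finitely many $H$-orbits $\A_i$. Your normal-core step is a legitimate and in fact necessary refinement: it is exactly what justifies the paper's unproved parenthetical claim that the $H_i$-orbit of \emph{any} axis in the $G$-orbit of $\gamma_i$ (not just of $\gamma_i$ itself) has $\theta$-bounded pairwise projections, which is what the conclusion requires for the representatives $\gamma_{n+1},\dots,\gamma_m$.
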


\subsection{Product of quasi-trees $\X$}
By Theorem \ref{bbf}  and Proposition \ref{distfor2} for each $\A_i$ we have a quasi-tree $\C_{K}(\A_i)$ that has an isometric $H$-action and a lower bound on distance
$$\frac14\sum_{\gamma\in\A_i} d_\gamma(x,y)_{K} \le d_{\C_{K}(\A_i)}(x,y)$$
where $x$ and $y$ lie on axes in $\A_i$.

Let
$$\X= \prod_{i=1}^m \C_{K}(\A_i)$$
be the product of quasi-trees. We give $\X$ the $\ell^1$-metric (which is quasi-isometric
to the $\ell^2$-metric). If $\hat x$ and $\hat y$ are $m$-tuples representing elements in $\X$ with the $i$th coordinate lying in axis in $\A_i$ then we sum the distance bound to get
$$\frac14\sum_i\sum_{\gamma\in\A_i} d_\gamma(x_i,y_i)_{K} \le d_{\X}(\hat x,\hat y) $$

Fix $\hat x$ as a basepoint. We claim that the orbit map $H\to \X$ given by $h\mapsto h(\hat x)$ is a quasi-isometric embedding. As $H$ is finite index in $G$ it is quasi-isometrically embedded in $\Gamma(G)$ so we need to show that $d_\X(\hat x, h(\hat x))$ is bounded above and below by linear functions of the word length $|h|$. (We emphasize that the word length is for the generators of $G$ we chose in Theorem \ref{quasi-axes}.) The upper bound is clear since orbit maps are Lipschitz. The union $\hat x \cup \{id\}$ is a finite set and therefore has diameter in $\Gamma(G)$ bounded by some $R>0$. By Theorem \ref{quasi-axes}, after possibly enlarging  $R$, we can also assume that for all $h \in H$ there is an axis in $\tilde A$ that intersects the $R$-neighborhoods in $\Gamma(G)$ of both $id$ and $h$.
By Proposition \ref{main estimate} we have
$$|h| \le 2\sum_i\sum_{\gamma\in\A_i} d_\gamma(x_i, h(x_i))_K +L+2R$$
and therefore
$$\frac 18\left(|h| - L -2R\right) \le d_\mathcal X(\hat x, h(\hat x)).$$
This completes the proof of Theorem \ref{thm1}.

\section{Proof of Theorem \ref{thm2}}
Let $\Sigma$ be a closed surface with finitely many marked points and
let $MCG(\Sigma)$ be the mapping class group of $\Sigma$.  The rest of
the paper is devoted to the proof of Theorem \ref{thm2}, that
$MCG(\Sigma)$ embeds in a product of quasi-trees. The general outline
closely follows our proof of Theorem \ref{thm1}, but there are several
complications that arise. The central one is that $MCG(\Sigma)$ is not
a hyperbolic group. However, by the Masur-Minsky distance formula it
does embed in an infinite product of hyperbolic spaces, the curve
graphs for subsurfaces of $\Sigma$.  In \cite{bbf}, we used projection
complexes to embed $MCG(\Sigma)$ in a finite product of hyperbolic
spaces where the Masur-Minsky distance formula was a key
ingredient. We would like to follow the strategy of the proof of
Theorem \ref{thm1} to embed each curve graph in finite product of
quasi-trees. However, curve graphs are locally infinite so this adds a
new difficulty.

To see this difficulty let us focus on the main factor, the
curve graph $\C(\Sigma)$ of the surface $\Sigma$.
 If we mimic the
construction in Section \ref{s:axes}, we would take axes of all pseudo-Anosov
elements of some bounded translation length. However, this would give us infinitely
many conjugacy classes and the coloring construction in Section
\ref{2.5} will break down. To fix this problem we restrict to a finite collection of conjugacy classes that contain every {\em thick} segment of bounded length. This amounts to requiring the axes in Teichm\"uller space fellow travel every geodesic segment in a fixed thick part, but we will develop this notion combinatorially, in terms of Masur-Minsky subsurface projections. This will give an embedding of the thick part of the curve graph in a finite product of quasi-trees but not a quasi-isometric embedding of the entire curve graph. The distance lost will be picked up in curve graphs of proper subsurfaces. This is captured more formally in our {\it thick distance formula}, a version
of the Masur-Minsky distance formula that counts only long segments
that are thick in some subsurface. With these modifications, the proof of Theorem \ref{thm1}
will generalize to mapping class groups.

\subsection{Curve graphs and subsurface projections}
We set some notation. The {\em curve graph} of $\Sigma$ is denoted $\C(\Sigma)$. If $Y$ is an essential (connected, compact and possibly punctured) subsurface that is not a triply punctured sphere then its curve graph is also denoted $\C(Y)$. If $x$ is a curve in $\C(\Sigma)$ then $x$ is disjoint from $Y$ if it can be homotoped in $\Sigma$ to be disjoint from $Y$. Otherwise $x$ {\em cuts} $Y$. If $x$ cuts $Y$ we let $\pi_Y(x)$ be the projection of $x$ to $Y$. If $\tilde x$ is a collection of curves then $\pi_Y(\tilde x)$ is the union of $\pi_Y(x)$ for those $x\in\tilde x$ that cut $Y$. If some component of $\partial X$ cuts $Y$ then $\pi_Y(X) = \pi_Y(\partial X)$. Two subsurfaces $X$ and $Y$ are {\em transverse} if a component of  $\partial X$ cuts $Y$ and a component of $\partial Y$ cuts $X$. We refer to \cite{MM}, \cite{MM2} for precise definitions.

The next result plays a central role in the paper.
\begin{thm}\label{axioms-hold}
There exists a universal constant $\xi>0$ such that the following holds.
\begin{itemize}
\item If $\bY$ is a collection of pairwise transverse subsurfaces then $(\bY, \{\pi_Y\})$ satisfy the projection axioms with projection constant $\xi$
(see \cite[Section 5]{bbf}).
\item If $x,z\in \cC(\Sigma)$ and $d_Y(x,z)> \xi$ then every geodesic in $\cC(\Sigma)$ from $x$ to $z$ contains a curve disjoint from $Y$ (\cite[Theorem 3.1]{MM2}).
\end{itemize}
\end{thm}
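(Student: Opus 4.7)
The plan is to recognize that Theorem \ref{axioms-hold} is essentially a repackaging of three foundational Masur--Minsky results on subsurface projections, so I would organize the proof as a consolidation of those results, being careful to choose a single $\xi$ large enough to absorb all the various constants that appear.

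First, for axiom (P0) I would use the standard bound that the subsurface projection $\pi_Y(x)$ of a single curve $x$ cutting $Y$ has diameter at most $2$ in $\cC(Y)$, and $\pi_Y(X) = \pi_Y(\partial X)$ is the union of the projections of the finitely many boundary components of $X$, which all live within bounded distance of one another because they are disjoint curves. This gives a universal bound of the desired form.

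For (P1) I would invoke Behrstock's inequality: there is a constant $B$ such that whenever $Y$ and $X$ are transverse essential subsurfaces and $\mu$ is any curve cutting both, $\min\{d_Y(\partial X,\mu), d_X(\partial Y,\mu)\} \le B$. Applied with $\mu$ a boundary component of $Z$ (and using that $X,Y,Z$ are pairwise transverse so $\partial Z$ cuts both $X$ and $Y$), the hypothesis $d_Y(X,Z) > \xi$ forces $d_Y(\partial X, \partial Z) > \xi - O(1)$, so that Behrstock gives $d_X(\partial Y, \partial Z) \le B$; combining with the (P0) bound for $\pi_X(\partial Y)$ and $\pi_X(\partial Z)$ yields $d_X(Y,Z) \le \xi$ once $\xi$ is chosen sufficiently large relative to $B$ and the (P0) constant.

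For (P2) I would use Masur--Minsky finiteness: for any two curves $x,z$ in $\cC(\Sigma)$ and any threshold $M$, only finitely many essential subsurfaces $Y$ satisfy $d_Y(x,z) > M$. Given $X \neq Z$ in $\bY$, pick boundary curves $x \in \partial X$, $z \in \partial Z$; by the (P0) bound, $d_Y(X,Z) > \xi$ implies $d_Y(x,z) > \xi - O(1)$, so the finiteness theorem finishes the argument. The second bullet of the theorem is precisely the Bounded Geodesic Image Theorem of Masur--Minsky: there is a threshold $M_0$ such that if every vertex on some $\cC(\Sigma)$-geodesic from $x$ to $z$ cuts $Y$, then $d_Y(x,z) \le M_0$. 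Taking $\xi \ge M_0$ (along with the Behrstock and (P0) constants from above) gives the stated conclusion for a single $\xi$.

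The main obstacle is not really mathematical depth, since each ingredient is a classical Masur--Minsky fact, but rather bookkeeping of constants: ensuring that the same $\xi$ simultaneously serves as (P0) bound, Behrstock threshold, Masur--Minsky finiteness threshold, and bounded-geodesic-image threshold, and that the transitions between projections of subsurfaces $\pi_Y(X)$ and projections of boundary curves $\pi_Y(\partial X)$ only introduce bounded additive errors. A secondary subtlety is that the transversality hypothesis is used in two different ways — for Behrstock's inequality we need the pair $\{X,Y\}$ (and $\{Y,Z\}$) to be transverse so that the projections in question are defined, and for the second bullet we need $Y$ transverse to both $x$ and $z$ so that BGI applies; both are guaranteed by the pairwise transversality assumption.
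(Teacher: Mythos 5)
Your proposal is correct and matches the paper's treatment: the paper gives no proof of Theorem \ref{axioms-hold}, presenting it as a consolidation of the standard Masur--Minsky facts (bounded projection diameter, Behrstock's inequality, the finiteness of large-projection subsurfaces, and the Bounded Geodesic Image Theorem) into a single constant $\xi$, which is exactly the assembly you carry out. The only work, as you note, is the bookkeeping to make one $\xi$ serve all four roles, and your handling of that is sound.
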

These two results are usually stated separately but it will be convenient for us to have the same constant for both. The second bullet is the {\em Bounded Geodesic Image Theorem} and we will reference it below as BGIT. Sometimes the contrapositive will also be useful: If every curve in the geodesic cuts $Y$ then the projection of the geodesic to $Y$ has diameter $< \xi$ in $\cC(Y)$.

\subsection{The Masur-Minsky distance formula}
Recall the Masur-Minsky distance formula for word length in the mapping class group
(\cite[Theorem 6.12]{MM2}, cf. \cite[Section 2]{bbf} for this form).
\begin{thm}[Masur-Minsky distance formula]\label{MM.formula}
Let $\tilde x$ be a collection of filling curves on $\Sigma$. Then for $R$ sufficiently large 
the word length $|g|$ (with respect to some fixed generating set) is bounded above and below by linear functions of
$$\sum_{Y\subseteq \Sigma} d_Y(\tilde x, g(\tilde x))_R.$$
\end{thm}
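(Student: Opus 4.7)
The plan is to reduce the statement to the original Masur--Minsky distance formula \cite{MM}, which is usually phrased in terms of clean (or complete) markings of $\Sigma$ rather than filling curve systems. The bridge is to associate to the filling collection $\tilde x$ a clean marking $\mu=\mu(\tilde x)$: choose a pants decomposition among the curves of $\tilde x$ (or a bounded-length pants decomposition obtained from $\tilde x$) and use the remaining curves of $\tilde x$ to select transversals. Because $\tilde x$ is filling, this construction is equivariant up to bounded ambiguity, and for every essential subsurface $Y\subseteq\Sigma$ there is a uniform constant $C$ with
$$|d_Y(\tilde x, g(\tilde x)) - d_Y(\mu, g\cdot\mu)|\leq C.$$
Hence after replacing $R$ by $R-C$ and absorbing constants in the final linear function, it suffices to compare $|g|$ with $\sum_Y d_Y(\mu, g\cdot\mu)_R$.

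For the upper bound I would invoke the hierarchy machinery of \cite{MM}: given the markings $\mu$ and $g\cdot\mu$, construct a hierarchy $H$ of tight geodesics in the curve graphs of subsurfaces connecting them. The length of the induced resolution path in the marking graph is comparable to $\sum_{Y\subseteq\Sigma} d_Y(\mu, g\cdot\mu)_R$ once $R$ is above the Masur--Minsky threshold. Since the orbit map $MCG(\Sigma)\to\M(\Sigma)$ into the marking graph is an $MCG(\Sigma)$-equivariant quasi-isometry (clean markings have finite stabilizer and the action is cocompact), this translates to the desired upper bound on $|g|$.

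For the lower bound the key ingredients are: (a) each subsurface projection $\pi_Y$ is coarsely Lipschitz from the marking graph to $\cC(Y)$, so a single elementary move in $\M(\Sigma)$ changes each $d_Y$ by a uniformly bounded amount; and (b) by the Bounded Geodesic Image Theorem (Theorem \ref{axioms-hold}), for $R$ larger than the BGIT constant $\xi$, each elementary move in the marking graph contributes nontrivially to $d_Y(,)_R$ for only boundedly many $Y$. Summing over the steps of a geodesic word representing $g$ yields $\sum_Y d_Y(\mu, g\cdot\mu)_R \lesssim |g|$.

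The main obstacle is the upper bound, which rests on the hierarchy construction and the resolution of hierarchies into paths in $\M(\Sigma)$; I would not reprove this but cite \cite{MM} directly. The only genuinely new work is the bookkeeping in the first paragraph, converting between filling collections of curves and clean markings, and verifying that the Lipschitz and BGIT-based arguments in the lower bound remain valid after this conversion.
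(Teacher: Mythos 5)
The paper does not prove this statement: it is recalled verbatim from Masur--Minsky \cite{MM} and used as a black box, so your reduction from filling collections to clean markings followed by a citation of the hierarchy machinery is exactly the intended justification. The one point worth tightening is step (b) of your lower bound: a single elementary move satisfies $d_Y(\mu,\mu')\le 4$ for \emph{every} subsurface $Y$ (so for $R>4$ it contributes to no thresholded term at all), and bounding the \emph{number} of subsurfaces that appear in $\sum_Y d_Y(\mu,g\cdot\mu)_R$ linearly in $|g|$ requires the ``large links'' counting argument of \cite{MM} rather than naive summation over moves --- but that, too, is part of what the citation covers.
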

A collection of curves $\tilde x$ is {\em filling} if every curve in $\C(\Sigma)$ intersects some curve in $\tilde x$.

We will need a new version of this distance formula where length is only measured in the {\em thick} part of the curve graph. We need some more setup before we state the formula.

\subsubsection{Bounded pairs and finiteness}

The curve graph is not locally finite. The following concept is the
replacement for this lack of local finiteness. See also \cite{kasra-saul}.
  
\begin{definition}[$T$-thick]
A collection of curves $\tilde x$ is $T$-thick if for all $x,z\in \tilde x$ and all proper subsurface $Y$ we have $d_Y(x,z) \le T$.
\end{definition}

\begin{thm}[\cite{choi-rafi,wat-fin}]\label{yohsuke}
Given any $C>0$ there exists a $D>0$ such that if $x$ and $y$ are in
$\C(\Sigma)$ and $i(x,y) \ge D$ where $i(x,y)$ is the geometric
intersection number, then $d_{\C(Y)}(x,y)\geq C$ for some subsurface
$Y\subseteq \Sigma$.
\end{thm}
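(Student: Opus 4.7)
The plan is to prove the contrapositive: for each $C>0$, the collection of pairs of curves $(x,y)$ with $d_{\C(W)}(x,y) \le C$ for every essential subsurface $W\subseteq \Sigma$ (including $W=\Sigma$) falls into only finitely many $MCG(\Sigma)$-orbits, and hence takes only finitely many values of the $MCG$-invariant geometric intersection number $i(x,y)$. Choosing $D$ larger than all of these values then gives the theorem.

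To establish the finiteness of orbits, I would first extend $x$ to a clean marking $\mu$ whose base pants decomposition contains $x$, and similarly extend $y$ to $\nu$. The transversals are chosen carefully so that the subsurface projection distances $d_W(\mu,\nu)$ also remain uniformly bounded in every subsurface $W$: for non-annular $W$ the projections of $\mu$ and $\nu$ are close to those of $x$ and $y$ up to bounded contributions from transversals disjoint from $W$, and for each annular $W$ a compatible choice of twist parameter forces the annular projection to stay bounded in terms of $C$. Applying the Masur--Minsky distance formula in the marking complex $\M(\Sigma)$ then yields
$$d_{\M(\Sigma)}(\mu,\nu) \; \asymp \; \sum_{W\subseteq\Sigma} d_W(\mu,\nu)_R$$
for a fixed threshold $R$, a sum in which every term is bounded by a constant depending on $C$, and in which only finitely many terms are nonzero by projection axiom (P2) of Theorem \ref{axioms-hold}. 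Hence $d_{\M(\Sigma)}(\mu,\nu)$ is uniformly bounded in terms of $C$.

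Since $MCG(\Sigma)$ acts cocompactly on $\M(\Sigma)$ with finite vertex stabilizers, a ball of bounded radius in $\M(\Sigma)$ meets only finitely many $MCG$-orbits of pairs of markings. This pins $(\mu,\nu)$ to finitely many orbits, hence $(x,y)$ to finitely many orbits as well, completing the reduction.

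The main obstacle will be the transversal construction in the first step. Bounding the non-annular projections of $\mu$ and $\nu$ is essentially immediate from the hypothesis on $d_W(x,y)$, but arranging that the twist parameters of the transversals in each annular neighborhood can be chosen simultaneously and compatibly so that every annular $d_W(\mu,\nu)$ remains bounded is more delicate; this is precisely the content of the compactness argument in \cite{choi-rafi, wat-fin}. An alternative route that sidesteps much of this is to work first in the pants graph, where no transversals appear, and then upgrade to the full marking complex using that annular projections of $(x,y)$ are already controlled by hypothesis.
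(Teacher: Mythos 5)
The paper does not prove this statement; it is imported directly from \cite{choi-rafi,wat-fin}, so there is no internal argument to compare yours against. Your outline is essentially the standard proof from those references: pass to the contrapositive, promote $x$ and $y$ to complete clean markings $\mu,\nu$ with uniformly bounded subsurface projections, apply the Masur--Minsky distance formula in the marking complex with threshold above that bound to conclude $d_{\M(\Sigma)}(\mu,\nu)$ is uniformly bounded, and then use local finiteness and cocompactness of the $MCG(\Sigma)$-action to get finitely many orbits of pairs, hence finitely many values of the invariant $i(x,y)$. The skeleton is sound and the reduction steps (distance formula with large threshold, cocompactness, $i(x,y)\le i(\mu,\nu)$) are all correct.

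Two caveats. First, the step you defer --- arranging the markings so that \emph{all} projections are controlled --- is the entire mathematical content of the theorem, and you defer it to the very papers being cited; as a self-contained argument this is circular, so what you have is an honest sketch rather than a proof. Second, your claim that bounding the non-annular projections of $\mu$ and $\nu$ is ``essentially immediate from the hypothesis'' is not quite right. For a non-annular subsurface $W$ contained in the complement of $x$, the quantity $d_W(x,y)$ is undefined and the hypothesis says nothing; the projection $\pi_W(\mu)$ is then carried entirely by the base curves and transversals of $\mu$ other than $x$, and these must be chosen coarsely equal to $\pi_W(y)$, exactly as the annular twist parameters must. So the delicate consistency problem you flag is not confined to annuli, and your proposed detour through the pants graph does not avoid it for these complementary non-annular subsurfaces. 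Once that marking-construction step is supplied (e.g., by resolving a hierarchy between $x$ and a marking containing $y$, or by the realization theorem for consistent projection tuples), the rest of your argument goes through.
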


Up to the action of the mapping class group there are only finitely many curves of bounded intersection. This gives the following corollary.
\begin{cor}\label{finite-thick}
Up to the action of the mapping class group there are finitely many collections of $T$-thick curves in $\C(\Sigma)$ that have diameter in $\C(\Sigma)$ bounded by $T$.
\end{cor}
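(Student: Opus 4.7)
The strategy is to reduce the two hypotheses to a uniform bound on pairwise geometric intersection numbers, then invoke a standard mapping class group finiteness.

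First, combine the hypotheses to bound subsurface projections over every subsurface of $\Sigma$. The $T$-thick hypothesis gives $d_Y(x,y) \le T$ for all $x, y \in \tilde x$ and every proper subsurface $Y \subsetneq \Sigma$, while the diameter bound gives $d_{\C(\Sigma)}(x, y) \le T$, and hence $d_{\Sigma}(x,y) \le T$ up to an additive constant. Thus $d_Y(x, y)$ is bounded uniformly in $T$ for every essential subsurface $Y \subseteq \Sigma$, including $\Sigma$ itself. Applying Theorem \ref{yohsuke} in its contrapositive form then produces a constant $D = D(T)$ such that $i(x, y) \le D$ for every $x, y \in \tilde x$.

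Second, invoke the standard surface-topological fact that on a finite-type surface $\Sigma$ the collections of essential simple closed curves whose pairwise geometric intersection numbers are at most $D$ form only finitely many orbits under $MCG(\Sigma)$. This splits into (a) a uniform bound on the cardinality of any such collection in terms of $D$ and $\Sigma$, and (b) finiteness of $MCG$-orbits of ordered tuples of fixed length with pairwise intersection at most $D$. For (a) one can, for instance, take a hyperbolic structure in which one curve of $\tilde x$ is short and use Mumford compactness to bound the hyperbolic length, and hence the number, of the remaining curves; alternatively one can cite the finiteness results from \cite{choi-rafi,wat-fin} already invoked for Theorem \ref{yohsuke}. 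Part (b) is a routine argument that the topological configuration types of such tuples are finite on a finite-type surface.

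The only nontrivial step is the cardinality bound in (a); everything else is formal from the bounded-intersection conclusion and Theorem \ref{yohsuke}.
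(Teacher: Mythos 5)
Your proof is correct and follows the same route as the paper: the thickness and diameter hypotheses together bound all subsurface projection distances (proper subsurfaces and $\Sigma$ itself, respectively), the contrapositive of Theorem \ref{yohsuke} then bounds pairwise geometric intersection numbers, and the corollary follows from the standard fact that there are only finitely many $MCG(\Sigma)$-orbits of collections of curves with uniformly bounded pairwise intersection. The paper compresses exactly this into one sentence, so your write-up is just a more detailed version of its argument.
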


\subsubsection{Tight geodesics}

If both $x,z \in \C(\Sigma)$ cut $Y$, then we define
$$d_Y(x,z) = \diam_{\C(Y)}(\pi_Y(x) \cup \pi_Y(z)).$$
If $g$ is a geodesic in $\C(\Sigma)$ connecting $x$ to $z$  and $x'$ and $z'$ are endpoints of a subsegment then there is no general relationship between $d_Y(x,z)$ and $d_Y(x',z')$. However, if we restrict to the special class of tight geodesics then we will get bounds. We say that a geodesic $g=\{x_0, x_1, \dots, x_n\}$ is {\em tight} if $x_i$ is a component of the boundary of the surface filled by $x_{i-1}$ and $x_{i+1}$ for $0<i<n$. By \cite[Lemma 4.5]{MM2} there is a tight geodesic connecting any two curves in $\C(\Sigma)$.

\begin{lemma}\label{tight-bound}
Assume that $x'$ and $z'$ lie on a tight geodesic between $x$ and $z$ and that $x'$ and $z'$  cut a subsurface $Y$. If $d_Y(x',z') \ge \xi$ then $x$ and $z$ cut $Y$ and 
$$d_Y(x',z') < d_Y(x,z) + 2\xi.$$ In particular if $x$ and $z$ are
$T$-thick then the collection of curves in a tight geodesic from $x$
to $z$ is $(T+2\xi)$-thick.
\end{lemma}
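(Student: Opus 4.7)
My plan is to use the Bounded Geodesic Image Theorem (BGIT, Theorem~\ref{axioms-hold}(ii)) together with a structural consequence of the tightness condition. The structural fact I need is the following: if $x_{l-1}$ and $x_{l+1}$ on a tight geodesic are both disjoint from $Y$, then so is $x_l$. This follows because tightness forces $x_l$ to be a boundary component of the filled subsurface $F(x_{l-1},x_{l+1})$, and this subsurface is disjoint from $Y$ when both of its fillers are (the filling operation cannot introduce intersections with $Y$ that its inputs lacked). Consequently, the set of indices at which the tight geodesic cuts $Y$ has no interior isolated points, and the non-cutting curves occupy intervals.

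For the claim that $x$ and $z$ cut $Y$, I argue by contradiction. Write the tight geodesic as $x=x_0,\ldots,x_n=z$ with $x'=x_i$ and $z'=x_j$, and suppose $x$ does not cut $Y$. Let $k\ge 1$ be the smallest index for which $x_k,x_{k+1},\ldots,x_i$ all cut $Y$; then $x_{k-1}$ is disjoint from $Y$. Applying the contrapositive of BGIT to $[x_k,x_i]$ gives $d_Y(x_k,x')<\xi$. A parallel analysis between $x'$ and $z'$, processing each maximal cutting subinterval with BGIT and using the structural sub-lemma to control the non-cutting gaps, forces $d_Y(x',z')<\xi$, contradicting the hypothesis. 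By symmetry $z$ also cuts $Y$.

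For the inequality, with $x,z$ now known to cut $Y$, I apply BGIT to the tight subgeodesics $[x,x']$ and $[z',z]$. In the principal case where every curve on these subgeodesics cuts $Y$, BGIT gives $d_Y(x,x')<\xi$ and $d_Y(z,z')<\xi$, and the diameter triangle inequality
$$d_Y(x',z')\le d_Y(x,x')+d_Y(x,z)+d_Y(z,z')$$
yields $d_Y(x',z')<d_Y(x,z)+2\xi$. When non-cutting gaps appear on $[x,x']$ or $[z',z]$, the structural sub-lemma combined with BGIT applied to each cutting subinterval still delivers the same bound. The ``in particular'' statement is then immediate: if $x,z$ are $T$-thick and $Y$ is a proper subsurface with $x',z'$ both cutting $Y$, then either $d_Y(x',z')<\xi\le T+2\xi$ or $d_Y(x',z')<d_Y(x,z)+2\xi\le T+2\xi$.

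The main obstacle is controlling projections across non-cutting gaps on the tight geodesic. Tightness restricts these gaps (via the structural sub-lemma) but does not eliminate them, and the cumulative triangle-inequality bookkeeping across successive cutting subintervals, each controlled by BGIT, must be done carefully to rule out the ``bad'' configuration in which $x$ or $z$ fails to cut $Y$ despite $d_Y(x',z')\ge\xi$.
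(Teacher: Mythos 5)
Your tightness sub-lemma and your final step (getting $d_Y(x,x')<\xi$ and $d_Y(z',z)<\xi$ from BGIT and concluding via $d_Y(x',z')\le d_Y(x,x')+d_Y(x,z)+d_Y(z,z')$) are exactly what the paper does, and the ``in particular'' deduction is fine. The gap is in your argument that $x$ and $z$ cut $Y$. You propose to reach a contradiction by showing $d_Y(x',z')<\xi$, ``processing each maximal cutting subinterval with BGIT and using the structural sub-lemma to control the non-cutting gaps.'' This cannot work, for two reasons. First, the sub-lemma gives no control on the projection distance across a gap: if, say, $x_{a+1}$ is a component of $\partial Y$ while $x_a$ and $x_{a+2}$ both cut $Y$, then $d_Y(x_a,x_{a+2})$ can be arbitrarily large, so chaining BGIT estimates over the cutting subintervals does not bound $d_Y(x',z')$. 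Second, and more decisively, the hypothesis $d_Y(x',z')\ge\xi$ together with BGIT already forces some vertex strictly between $x'$ and $z'$ to be disjoint from $Y$, so a gap inside $[x',z']$ is unavoidable and the inequality $d_Y(x',z')<\xi$ you are aiming for is simply false under the standing hypotheses; no bookkeeping confined to the segment $[x',z']$ can yield the contradiction.

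The missing idea is that on a tight geodesic the footprint of $Y$ (the set of vertices disjoint from $Y$) is a \emph{single} interval of diameter at most $2$: any two curves disjoint from the proper subsurface $Y$ are at distance at most $2$ in the curve graph (vertices at distance $\ge 3$ fill the surface), and your sub-lemma then upgrades this to connectivity of the footprint. Hence there is at most one non-cutting gap. BGIT places that unique gap strictly between $x'$ and $z'$, so every vertex outside $[x',z']$ --- in particular $x$ and $z$ --- cuts $Y$; equivalently, if $x$ were disjoint from $Y$, connectivity of the footprint would force $x'$ to be disjoint from $Y$ as well, contradicting the hypothesis. This is the paper's argument. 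It also removes the case you worry about at the end: once the first claim is proved this way, $[x,x']$ and $[z',z]$ contain no non-cutting vertices at all, so BGIT applies to them directly, and your proposed fallback for ``gaps on $[x,x']$ or $[z',z]$'' (which suffers from the same flaw as above) is not needed.
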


\begin{proof}
Since $d_Y(x',z') \ge \xi$ by the BGIT there is a $y'$ in between $x'$
and $z'$ that is disjoint from $Y$. If there is another $y \in g$ that
is disjoint from $Y$ and is in the complement of the segment between
$x'$ and $y'$ then $d_{\C(\Sigma)}(y,y') \le 2$ and therefore we must
have that $y$ and $y'$ are both adjacent to either $x'$ or $y'$. By
tightness a curve that is adjacent to two curves that are disjoint
from $Y$ will also be disjoint from $Y$. This is a contradiction, so everything in the complement of the segment from $x'$ to $z'$ cuts $Y$.  In particular $x$ and $z$ cut $Y$.

If $d_Y(x,x') \ge \xi$ there is a $y$ between $x$ and $x'$ that is disjoint from $Y$, contradicting what we have just shown. Therefore $d_Y(x,x') < \xi$ and by the same argument $d_Y(z',z) < \xi$. By the triangle inequality
$$d_Y(x',z') \le d_Y(x,z) + d_Y(x,x') + d_Y(z,z') < d_Y(x,z) + 2\xi.$$
\end{proof}

\noindent
{\bf Convention.}
 Given a constant $T$ let $\hat T = T +2\xi$ and $\check T = T-2\xi$.

\bigskip

\subsubsection{Thick distance}
Given filling collections $\tilde x,\tilde z$ on $\Sigma$ and a subsurface $Y \subset \Sigma$,
we define
$$\Omega_T(Y;\tilde x,\tilde z) = \{Z \subseteq Y | Z \not= Y, d_Z(\tilde x,\tilde z)>T\}.$$
This set has an order coming from inclusion. Let $\Omega^m_T(Y;\tilde x,\tilde z)$
be the subset of maximal elements.

\begin{lemma}\label{at_most_two}
Given filling collections  $\tilde x,\tilde z$ on $\Sigma$, 
and a subsurface $Z\subset \Sigma$ there are at most two subsurfaces $Y$ with $d_Y(\tilde x,\tilde z) > T$ such that $Z \in \Omega^m_T(Y; \tilde x,\tilde z)$. 
\end{lemma}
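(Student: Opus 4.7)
The plan is to argue by contradiction. Suppose that there exist three distinct subsurfaces $Y_1, Y_2, Y_3$ of $\Sigma$, each with $d_{Y_i}(\tilde x,\tilde z)>T$ and with $Z \in \Omega^m_T(Y_i;\tilde x,\tilde z)$ for $i=1,2,3$.

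My first move is to establish that $Y_1, Y_2, Y_3$ are pairwise transverse. Since each $Y_i$ properly contains $Z$, no two $Y_i, Y_j$ can be disjoint. If $Y_i \subsetneq Y_j$ for some $i \ne j$, then $Y_i \in \Omega_T(Y_j;\tilde x,\tilde z)$ with $Z \subsetneq Y_i \subsetneq Y_j$, contradicting the maximality of $Z$ in $\Omega_T(Y_j;\tilde x,\tilde z)$. Thus the $Y_i$ are pairwise non-nested and non-disjoint, hence pairwise transverse, and by Theorem~\ref{axioms-hold} the collection $\{Y_1,Y_2,Y_3\}$ satisfies the projection axioms with constant $\xi$. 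I will implicitly assume that $T$ is chosen large compared to $\xi$.

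The heart of the argument is to derive a contradiction by exhibiting a subsurface $W$ with $Z \subsetneq W \subsetneq Y_i$ for some $i$ and $d_W(\tilde x,\tilde z)>T$, violating the maximality of $Z$ in $\Omega_T(Y_i;\tilde x,\tilde z)$. The natural candidate is the component of $Y_i \cap Y_j$ containing $Z$ for appropriate $i \ne j$. I split into cases. If some pairwise intersection $Y_i \cap Y_j$ strictly contains $Z$, then the lower bounds $d_{Y_i}, d_{Y_j} > T$ together with BGIT (Theorem~\ref{axioms-hold}(ii)) force the tight geodesics in $\C(Y_i)$ and $\C(Y_j)$ between the projections of $\tilde x$ and $\tilde z$ to contain curves disjoint from $Z$; these curves should lie in $W = Y_i \cap Y_j$, and a Masur--Minsky-style nested-projection estimate should upgrade this to $d_W(\tilde x,\tilde z) > T$. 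If instead all three pairwise intersections equal $Z$, then the surfaces $Y_i \setminus Z$ live in pairwise disjoint regions of $\Sigma \setminus Z$, and a topological case analysis exploiting the transversality of the $Y_i$ in $\Sigma$ should again produce an intermediate surface (filled, say, by $Z$ together with a boundary component of some $Y_j$ crossing $Y_i$) that violates maximality.

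The main obstacle is the quantitative projection estimate $d_W(\tilde x,\tilde z)>T$ in the first case: BGIT only provides qualitative information (a curve disjoint from $Z$ on the geodesic), and upgrading this to a diameter lower bound in $\C(W)$ for the full projections of $\tilde x$ and $\tilde z$ demands careful bookkeeping with subsurface projections and absorbs some additive error of order $\xi$. A secondary technical point is to fix $T$ sufficiently large at the outset so that both the projection axioms and BGIT apply with room to spare and the additive losses do not spoil the strict inequality needed to contradict maximality.
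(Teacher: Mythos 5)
Your first step---showing the three surfaces are pairwise transverse via maximality (nesting would put $Y_i$ itself into $\Omega_T(Y_j)$ above $Z$)---is exactly the paper's opening move. But the heart of your argument has a genuine gap, and it is the one you yourself flag: your entire strategy is to contradict the \emph{maximality} of $Z$ by producing an intermediate subsurface $W$ with $Z\subsetneq W\subsetneq Y_i$ and $d_W(\tilde x,\tilde z)>T$, and there is no mechanism in the hypotheses that forces such a $W$ to have large projection distance. Knowing $d_Z>T$, $d_{Y_i}>T$ and $d_{Y_j}>T$ says nothing about the projection distance in a component of $Y_i\cap Y_j$ sitting between $Z$ and $Y_i$; BGIT only tells you a geodesic in $\C(Y_i)$ passes near $\partial Z$, which gives no lower bound on $d_W$. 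The same objection applies to your second case: the surface filled by $Z$ and a boundary component of $Y_j$ has no reason to carry projection distance $>T$. So the contradiction you are aiming for cannot be reached this way.

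The paper contradicts something else entirely: not the maximality of $Z$, but the containment $Z\subset Y_1$. After establishing transversality, pick $x\in\pi_{Y_i}(\tilde x)$, $z\in\pi_{Y_i}(\tilde z)$ cutting all the $Y_i$. The Behrstock-type ordering on $\{Y_0,Y_1,Y_2\}$ (\cite[Theorem 3.3(G)]{bbf}) guarantees that one of the three, say $Y_1$, satisfies $|d_{Y_1}(x,z)-d_{Y_1}(Y_0,Y_2)|\le\xi$, so $d_{Y_1}(\partial Y_0,\partial Y_2)>T-\xi$ is large. But two curve systems with large projection distance in $\C(Y_1)$ fill $Y_1$, so every essential subsurface of $Y_1$ meets $\partial Y_0\cup\partial Y_2$. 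Since $Z$ is contained in all three $Y_i$, it is disjoint from both $\partial Y_0$ and $\partial Y_2$---a contradiction. This bypasses any need to estimate projections to intermediate subsurfaces. If you want to salvage your write-up, replace the search for $W$ with this ordering-plus-filling argument.
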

\begin{proof}
Let $Y_0, Y_1, Y_2$ be subsurfaces such that $d_{Y_i}(\tilde x,\tilde z) > T$ and $Z \in \Omega^m_T(Y_i;\tilde x,\tilde z)$.
If $Y_i \subset Y_j$ for $i\neq j$ then $Y_i \in \Omega_T(Y_j; \tilde x,\tilde z)$ so $Z \not\in \Omega^m_T(Y_j; \tilde x,\tilde z)$, a contradiction. Therefore the $Y_i$ are mutually transverse.

Now choose an $x\in\pi_Y(\tilde x)$ and $z\in \pi_Y(\tilde z)$ that both cut $Z$ (and hence the $Y_i$).
By the ordering (see e.g. \cite[Theorem 3.3(G)]{bbf})  we have that
two of the subsurfaces have a large projection to the the third. We
can assume that is $Y_1$ and $|d_{Y_1}(x,z) - d_{Y_1}(Y_0,Y_2)| \le
\xi$. In particular $d_{Y_1}(Y_0,Y_2)$ is large, so that
 $\partial Y_0$ and $\partial Y_2$ fill $Y_1$ and that if
$Z \subset Y_1$ then it must intersect either $\partial Y_0$ or
$\partial Y_2$, a contradiction.
\end{proof}

We give a key definition. 
\begin{definition}[$T,R$-thick distance]
Fix sufficiently large constants $T,R$. 
Let $x_1,\dots,
x_n \in \cC(Y)$ be curves occurring in this order on a tight geodesic
in $\C(Y)$
from $x$ to $z$ such that $d_{\cC(Y)}(x_{2i-1}, x_{2i}) \ge R$ and
$d_Z(x_{2i-1}, x_{2i}) \le T$ for all $Z \subsetneq Y$.
Then the {\it $T,R$-thick distance}
$d_Y^{T,R}(x,z)$ is the maximum of $\sum d_{\cC(Y)}(x_{2i-1},x_{2i})$
over all such choices for the $x_i$, and for the tight geodesics from $x$ to
$z$. For collections of curves $\tilde x$ and $\tilde z$ in $\C(Y)$ we set $$d^{T,R}_Y(\tilde x, \tilde z) = \max_{x\in\tilde x, z\in\tilde z} d^{T,R}_Y(x,z).$$
If $\tilde x$ and $\tilde y$ are collections in $\C(\Sigma)$ we define
$$d^{T,R}_Y(\tilde x, \tilde z) = d^{T,R}_Y(\pi_Y(\tilde x), \pi_Y(\tilde z)).$$
\end{definition}

\begin{definition}[Footprint]
If $g$ is a geodesic in $\C(Y)$
and $Z\subset Y$ is a proper subsurface then the {\em footprint}
$F_Z(g)$ of $Z$ is the set of vertices of $g$ that are disjoint from
$Z$.  
Since any vertices of $\cC(Y)$
that are distance three or more apart will fill $Y$ 
the diameter of $F_Z(g)$ is at most two.  The footprint is connected for all $Z\subset \Sigma$ if and only if $g$ is a tight geodesic.
\end{definition}

If $Z' \subset Z$ then $F_{Z'}(g)\supseteq F_Z(g)$ but it may be that
strict inclusion holds. Whenever $F_Z(g)$ is nonempty we set
$F_Z^\subset(g)$ to be the union of $F_{Z'}(g)$ over all $Z' \subset
Z$. Note that if $y$ is in $F^\subset_Z(g)$ then $d_{\C(Y)}(y,z)\leq
2$ for any boundary component $z$ of $Z$. Thus the diameter of
$F^\subset_Z(g)$ will be at most four.

We state a key lemma. 

\begin{lemma}\label{thick_distance}

  For $T,R>0$ sufficiently large the following holds.
Let $\tilde x$ and $\tilde z$ be filling collections on $\Sigma$.
 Let $Y$ be a subsurface in $\Sigma$. Then,
$$d^{T,R}_Y(\tilde x,\tilde z) + (4+2R)|\Omega^m_{\check T}(Y; \tilde x,\tilde z)| \ge d_Y(\tilde x,\tilde z)_R$$
\end{lemma}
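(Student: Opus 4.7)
The plan is to take $x^{*}\in \pi_Y(\tilde x)$ and $z^{*}\in \pi_Y(\tilde z)$ realizing $d_Y(\tilde x,\tilde z)$, fix a tight geodesic $g$ in $\C(Y)$ from $x^{*}$ to $z^{*}$ of length $\ell=d_Y(\tilde x,\tilde z)$, and decompose $g$ into footprints of maximal bad subsurfaces together with good ``thick'' complementary arcs. If $\ell<R$ both sides of the desired inequality are zero, so assume $\ell\ge R$. For each $Z\in\Omega^m_{\check T}(Y;\tilde x,\tilde z)$, the inequality $d_Z(\tilde x,\tilde z)>\check T\ge\xi$ together with BGIT shows that $F^\subset_Z(g)$ is a nonempty subset of $g$ of diameter at most $4$. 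Removing the union of these footprints from $g$ leaves at most $|\Omega^m_{\check T}|+1$ subarcs $J_1,\ldots,J_k$ of total length at least $\ell-4|\Omega^m_{\check T}|$.

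The key claim is that for each $J_i$ and each proper subsurface $W\subsetneq Y$ one has $d_W(\text{endpoints of }J_i)\le T$. If instead $d_W(\text{endpoints of }J_i)>T$, then since $J_i$ is a tight subsegment of $g$ (tightness is inherited by subsegments), Lemma \ref{tight-bound} gives $d_W(x^{*},z^{*})>T-2\xi$; choosing $T$ large enough to absorb the universal Behrstock-type constant comparing $\pi_W(x^{*})$ with $\pi_W(\tilde x)$ (and likewise for $z^{*},\tilde z$), this forces $d_W(\tilde x,\tilde z)>\check T$, i.e.\ $W\in\Omega_{\check T}$. By maximality, $W\subseteq Z$ for some $Z\in\Omega^m_{\check T}$. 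But BGIT applied to the tight geodesic $J_i$ produces a vertex of $J_i$ disjoint from $W$, and this vertex lies in $F_W(g)\subseteq F^\subset_Z(g)$, contradicting the disjointness of $J_i$ from the removed footprints.

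Given the claim, any $J_i$ of length at least $R$ has endpoints forming an admissible pair in the definition of $d^{T,R}_Y(x^{*},z^{*})$, and the good arcs inherit a linear order along $g$, so taking all long good arcs as consecutive pairs gives
$$d^{T,R}_Y(\tilde x,\tilde z)\ \ge\ \sum_{|J_i|\ge R}|J_i|\ \ge\ \ell-4|\Omega^m_{\check T}|-R(|\Omega^m_{\check T}|+1).$$
Rearranging and using that $(4+R)|\Omega^m_{\check T}|+R\le(4+2R)|\Omega^m_{\check T}|$ whenever $|\Omega^m_{\check T}|\ge 1$ yields the desired inequality; the case $|\Omega^m_{\check T}|=0$ is immediate since $g$ itself is then a single thick good arc. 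The main obstacle is constant-wrangling: one must choose $T$ (and hence $\check T$) large enough to simultaneously absorb the $2\xi$ loss from Lemma \ref{tight-bound} and the bounded slop between subsurface projections of representative curves and of the full filling collections, so that the contradiction in the key claim genuinely lands in $\Omega_{\check T}$.
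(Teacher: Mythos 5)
Your proposal is correct and follows essentially the same route as the paper's proof: decompose a tight geodesic realizing $d_Y$ by deleting the footprints $F^\subset_Z(g)$ of the maximal elements of $\Omega^m_{\check T}$, use Lemma \ref{tight-bound} together with BGIT to show the complementary arcs are $T$-thick (so the long ones contribute to $d^{T,R}_Y$), and then do the same count of at most $|\Omega^m_{\check T}|+1$ arcs with total excess $4|\Omega^m_{\check T}|+R(|\Omega^m_{\check T}|+1)$. The coarse comparison between $\pi_W(x^*)$ and $\pi_W(\tilde x)$ that you flag is glossed over in the paper as well, so your argument matches it in both structure and level of rigor.
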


\begin{proof}
First  if $\Omega^m_{\check T}(Y;\tilde x,\tilde z)$ is empty, then $\Omega^m_{T}(Y;\tilde x,\tilde z)$
is empty and $d^{T,R}_Y(\tilde x, \tilde y) = d_Y(\tilde x, \tilde y)_R$. 
So assume $\Omega^m_{\check T}(Y;x,z)$ is not empty. 

Choose $x \in \pi_Y(\tilde x), z \in \pi_Y(\tilde z)$ and $g$ a tight geodesic between them such that $g$ realizes the thick distance $d_Y^{T,R}(\tilde x, \tilde z)$. Let $J$ be a subsegment of $g$ with endpoints $x'$ and $z'$. By Lemma \ref{tight-bound}
$$d_Z(x',z') < d_Z(x,z) + 2\xi$$
if $x'$ and $y'$ cut $Z$. In particular if $d_Z(x',z') \ge T$ then 
$$Z\in \Omega_{\check T}(Y;x,z) \subset \Omega_{\check T}(Y;\tilde x, \tilde z).$$
As every $Z\in \Omega_{\check T}(Y;\tilde x, \tilde z)$ is contained in some $Z' \in\Omega^m_{\check T}(Y;\tilde x, \tilde z)$ we have that if the interior of $J$ is disjoint from every $Z'\in \Omega^m_{\check T}(Y;\tilde x, \tilde z)$ then $d_Z(x',z') < T$ for all $Z\subset Y$.

 Let $J_0, \dots, J_n$ be a maximal collection of disjoint subsegments of $g$ such that the interiors of the $J_i$ do not contain any elements of $F^\subset_Z(g)$ for $Z\in\Omega^m_{\check T}(Y;\tilde x, \tilde z)$. Then the endpoints of any $J_i$ are $T$-thick.  As each $F^\subset_Z(g)$ is connected $n \le |\Omega^m_{\check T}(Y;x,z)|$.

Let 
$$\mathcal I = \{ i | 0\le i \le n \mbox{ and } |J_i| \ge R\}$$
and $\mathcal I'$ the complement. Then
\begin{eqnarray*}
\sum_{0\le i \le n} |J_i| = \sum_{i \in \mathcal I} |J_i| + \sum_{i\in \mathcal I'} |J_i| &\le& d_Y^{T,R}(x,z) + R(|\Omega^m_{\check T}(Y;x,z)| + 1)\\
& \le & d_Y^{T,R}(x,z) + 2R|\Omega^m_{\check T}(Y;x,z)|.
\end{eqnarray*}
As the diameter of each $F^m_Z(g)$ is bounded above by four, the length of the complement of the $J_i$ is bounded by $4|\Omega^m_{\check T}(Y;x,z)|$ so
\begin{eqnarray*}
d_Y(\tilde x, \tilde z) & = & d_{\C(Y)}(x, z) \\
& \le & \sum |J_i| + 4|\Omega^m_{\check T}(Y;x,z)| \\
& \le & d^{T,R}_Y(x,z) + (4+2R)|\Omega^m_{\check T}(Y;x,z)|
\end{eqnarray*}
and the lemma follows.
\end{proof}

By $cx(Y)$ denote the complexity of $Y$, i.e. the length of the
longest chain $Y=Y_0\supset Y_1\supset\cdots\supset Y_k$ of distinct
subsurfaces. 

\begin{thm}\label{thm:thick_distance}
Fix $T,R$ sufficiently large with $R \le \check T$.
Let $\tilde x,\tilde z$ be filling collections in $\C(\Sigma)$.
Then, for each $n$
$$\sum_{cx(Y) \le n} d_Y(\tilde x, \tilde z)_{\check T} \le \sum_{cx(Y) = n} d^{T,R}_Y(\tilde x, \tilde z) + (9+4R)\sum_{cx(Y)<n} d_Y(\tilde x, \tilde z)_{\check T}.$$
\end{thm}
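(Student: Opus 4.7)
The plan is to apply Lemma \ref{thick_distance} to each $Y$ with $cx(Y)=n$, sum the resulting inequalities, and then control the error term $\sum |\Omega^m_{\check T}(Y;\tilde x,\tilde z)|$ by means of Lemma \ref{at_most_two}, finally adding the contribution from subsurfaces of strictly smaller complexity to both sides.

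First, fix $Y$ with $cx(Y)=n$. Since $R \le \check T$, a term-by-term comparison gives $d_Y(\tilde x,\tilde z)_{\check T} \le d_Y(\tilde x,\tilde z)_R$, so Lemma \ref{thick_distance} yields
\[
d_Y(\tilde x,\tilde z)_{\check T} \;\le\; d^{T,R}_Y(\tilde x,\tilde z) + (4+2R)\,|\Omega^m_{\check T}(Y;\tilde x,\tilde z)|.
\]
Summing over all $Y$ with $cx(Y)=n$ and adding $\sum_{cx(Y)<n} d_Y(\tilde x,\tilde z)_{\check T}$ to both sides produces
\[
\sum_{cx(Y)\le n} d_Y(\tilde x,\tilde z)_{\check T} \;\le\; \sum_{cx(Y)=n} d^{T,R}_Y(\tilde x,\tilde z) + (4+2R)\!\!\sum_{cx(Y)=n}\!\!|\Omega^m_{\check T}(Y;\tilde x,\tilde z)| + \!\!\sum_{cx(Y)<n}\!\! d_Y(\tilde x,\tilde z)_{\check T}.
\]

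The main step is then to bound the middle error term by $\sum_{cx(Z)<n} d_Z(\tilde x,\tilde z)_{\check T}$. Each $Z \in \Omega^m_{\check T}(Y;\tilde x,\tilde z)$ is a proper subsurface of $Y$, so $cx(Z) < n$, and by definition $d_Z(\tilde x,\tilde z)>\check T$. Swapping the order of summation, Lemma \ref{at_most_two} (applied with constant $\check T$ in place of $T$) says that each such $Z$ arises in at most two $\Omega^m_{\check T}(Y;\tilde x,\tilde z)$ among surfaces $Y$ of complexity $n$. Hence
\[
\sum_{cx(Y)=n} |\Omega^m_{\check T}(Y;\tilde x,\tilde z)| \;\le\; 2\,\bigl|\{Z : cx(Z)<n,\, d_Z(\tilde x,\tilde z)>\check T\}\bigr|.
\]
For $T$ chosen so that $\check T \ge 1$, each $Z$ in this set satisfies $d_Z(\tilde x,\tilde z)_{\check T} \ge 1$, so the cardinality is bounded by $\sum_{cx(Z)<n} d_Z(\tilde x,\tilde z)_{\check T}$.

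Combining the two displays gives
\[
\sum_{cx(Y)\le n} d_Y(\tilde x,\tilde z)_{\check T} \;\le\; \sum_{cx(Y)=n} d^{T,R}_Y(\tilde x,\tilde z) + \bigl(2(4+2R)+1\bigr)\sum_{cx(Y)<n} d_Y(\tilde x,\tilde z)_{\check T},
\]
which is precisely the claimed inequality with constant $9+4R$. The only slightly subtle point is verifying that the complexities line up so that Lemma \ref{at_most_two} may be used to turn a sum over $Y$ with $cx(Y)=n$ into a sum over $Z$ with $cx(Z)<n$; once that is done, the arithmetic delivers the exact constant $9+4R$.
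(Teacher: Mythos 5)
Your proof is correct and follows essentially the same route as the paper's: apply Lemma \ref{thick_distance} to each $Y$ of complexity $n$, use Lemma \ref{at_most_two} to see that each $Z$ of smaller complexity is counted at most twice, bound the resulting cardinality by $\sum_{cx(Z)<n} d_Z(\tilde x,\tilde z)_{\check T}$ since each nonzero term is at least $1$, and add the lower-complexity sum to both sides to arrive at $2(4+2R)+1 = 9+4R$. The only cosmetic difference is the order of operations (the paper converts $|\Omega^m_{\check T}(Y;\tilde x,\tilde z)|$ into a sum of $d_Z$ terms before summing over $Y$, whereas you sum over $Y$ first), which changes nothing.
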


We remark that each sum is over finitely many $Y$  since it is 
for $Y$ with $d_Y(\tilde x, \tilde z) \ge R$, and there are only finitely many 
such $Y$ for given $\tilde x,\tilde z$.

\begin{proof}
If $cx(Y) = n$ then by Lemma \ref{thick_distance},
\begin{eqnarray*}
d_Y(\tilde x, \tilde z)_{\check T} &\le& d_Y(\tilde x, \tilde z)_{R} \\
&\le& d^{T,R}_Y(\tilde x, \tilde z) + (4+2R)|\Omega^m_{\check T}(Y;\tilde x, \tilde z)| \\
& \le & d^{T,R}_Y(\tilde x, \tilde z) + (4+2R)\sum_{Z \in \Omega^m_{\check T}(Y;\tilde x, \tilde z)} d_Z(\tilde x, \tilde z)_{\check T}.
\end{eqnarray*}
By Lemma \ref{at_most_two}, any $Z$ will appear in at most two $\Omega_{\check T}^m(Y;\tilde x, \tilde z)$ and therefore if we sum the left hand side over all $Y$ with $cx(Y) = n$ we have
$$\sum_{cx(Y) =n} d_Y(\tilde x, \tilde z)_{\check T} \le \sum_{cx(Y)=n} d^{T,R}_Y(\tilde x, \tilde z) + (8+4R)\sum_{cx(Y)<n} d_Y(\tilde x, \tilde z)_{\check T}.$$
Adding $\sum_{cx(Y)<n} d_Y(\tilde x, \tilde z)_{\check T}$ to both sides gives the inequality.
\end{proof}

\begin{cor}\label{cor.thick_distance}
Let $\tilde x, \tilde z$ be filling collections in $\C(\Sigma)$.
Then
for sufficiently large $T,R$ with $R \le \check T$,
$$\sum_{Y\subset\Sigma} d_Y^{T,R}(\tilde x, \tilde z)
 \le 
\sum_{Y\subset\Sigma} d_Y(\tilde x, \tilde z)_{R} 
\le 
(9+4R)^{cx(\Sigma)-1} \sum_{Y\subset\Sigma} d_Y^{T,R}(\tilde x, \tilde z) 
 .$$
\end{cor}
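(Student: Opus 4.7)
The plan is to prove the two inequalities separately.

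The left inequality is immediate from the definition of $d_Y^{T,R}$: this quantity is a sum of lengths of disjoint subsegments of a tight geodesic in $\C(Y)$ from $\pi_Y(\tilde x)$ to $\pi_Y(\tilde z)$, so $d_Y^{T,R}(\tilde x,\tilde z) \le d_{\C(Y)}(\pi_Y(\tilde x),\pi_Y(\tilde z)) = d_Y(\tilde x,\tilde z)$. Moreover, each summand is required to have length at least $R$, so $d_Y^{T,R}(\tilde x,\tilde z) = 0$ whenever $d_Y(\tilde x,\tilde z) < R$. In either case $d_Y^{T,R}(\tilde x,\tilde z) \le d_Y(\tilde x,\tilde z)_R$, and summing over $Y$ gives the left inequality.

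For the right inequality the plan is to iterate Theorem \ref{thm:thick_distance} over the complexity of $Y$. The mild subtlety is that the theorem bounds a $\check T$-threshold sum on the left, whereas the corollary asks about the $R$-threshold sum. To reconcile the two, apply the theorem with parameter $T' := \hat R = R + 2\xi$ in place of $T$, so that $\check{T'} = R$ and the required hypothesis $R \le \check{T'}$ holds with equality (and $T'$ is as large as needed provided $R$ is). With $A_n := \sum_{cx(Y) \le n} d_Y(\tilde x,\tilde z)_R$ and $B_n := \sum_{cx(Y) = n} d_Y^{\hat R, R}(\tilde x,\tilde z)$, the theorem becomes the linear recursion $A_n \le B_n + (9+4R)\, A_{n-1}$. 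Unrolling it layer by layer through the complexities produces $A_n \le \sum_k (9+4R)^{n-k} B_k$, and crudely estimating the geometric factor at the top complexity yields $\sum_{Y} d_Y(\tilde x,\tilde z)_R \le (9+4R)^{cx(\Sigma)-1} \sum_{Y} d_Y^{\hat R, R}(\tilde x,\tilde z)$.

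The last ingredient is monotonicity of $d_Y^{T,R}$ in $T$: enlarging $T$ only relaxes the thickness requirement $d_Z(x_{2i-1},x_{2i}) \le T$ in the definition, so $d_Y^{\hat R, R} \le d_Y^{T, R}$ precisely when $\hat R \le T$, i.e., when $R \le \check T$, which is exactly our standing hypothesis. Substituting this bound into the previous display gives the right inequality. The main obstacle, beyond the bookkeeping of the geometric recursion, is spotting that the right way to reconcile the two different thresholds is to run Theorem \ref{thm:thick_distance} with $T' = \hat R$ and then absorb the remaining gap through this monotonicity.
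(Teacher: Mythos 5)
Your proof is correct, and the core mechanism is the same as the paper's: the trivial term-by-term bound for the left inequality, and an iteration of Theorem \ref{thm:thick_distance} through the complexities for the right one. Where you differ is in how the threshold mismatch between $R$ and $\check T$ is reconciled, and your handling is actually the more careful one. The paper runs the induction with the given $T$, obtaining a bound on $\sum_Y d_Y(\tilde x,\tilde z)_{\check T}$, and then passes to the $R$-threshold sum by asserting $d_Y(\tilde x,\tilde z)_R \le d_Y(\tilde x,\tilde z)_{\check T}$ ``since $\check T \le R$'' --- an inequality between thresholds that points the opposite way from the stated hypothesis $R\le\check T$, so the paper's argument as written really only covers the case $R=\check T$ (which is indeed the only case used later). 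Your move of running the theorem at $T'=\hat R$, so that $\check{T'}=R$ and the recursion $A_n\le B_n+(9+4R)A_{n-1}$ lives entirely at threshold $R$, and then passing from $d_Y^{\hat R,R}$ to $d_Y^{T,R}$ by monotonicity of the thick distance in its first parameter (valid exactly when $\hat R\le T$, i.e.\ $R\le\check T$), yields the statement over the full advertised range of $R$. The monotonicity observation is correct: enlarging $T$ only enlarges the set of admissible segment configurations in the definition of $d_Y^{T,R}$, so the supremum can only increase.
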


\begin{proof}
The first inequality is trivial since 
$d_Y^{T,R}(\tilde x, \tilde z) \le d_Y(\tilde x, \tilde z)_{R} $ for all $Y$.
By inductively applying Theorem \ref{thm:thick_distance}, with base case $n=cx(\Sigma)$, we have
$$\sum_{cx(Y) \le cx(\Sigma)} d_Y(\tilde x, \tilde z)_{\check T} \le (9+4R)^{cx(\Sigma) - n}\left(\sum_{n \le cx(Y) \le cx(\Sigma)} d^{T,R}_Y(\tilde x, \tilde z) + (9+4R)\sum_{cx(Y)<n} d_Y(\tilde x, \tilde z)_{\check T}\right).$$
When $n=1$ the last term on the right is zero. Since $\check T \le R$ we have $d_R(\tilde x, \tilde y) \le d_{\check T}(\tilde x, \tilde y)$ and the result follows.
\end{proof}

\subsubsection{Thick distance formula}

Combining the Masur-Minsky distance formula (Theorem \ref{MM.formula})  with Corollary \ref{cor.thick_distance} we have our thick distance formula.
\begin{thm}[Thick distance formula]\label{thick.MM}
Let $\tilde x$ be a filling collection on $\Sigma$. Then for $T,R$ sufficiently large
with $R \le \check T$, there exist $C_0,C_1$ such that  for all $g \in MCG(\Sigma)$ 
$$|g| \le  C_0 \sum_{Y\subseteq \Sigma} d^{T,R}_Y(\tilde x, g(\tilde x))+C_1$$
\end{thm}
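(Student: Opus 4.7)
The plan is to combine Theorem \ref{MM.formula} (the Masur-Minsky distance formula) with Corollary \ref{cor.thick_distance} in an essentially formal way, since the main technical work has already been done in reducing ordinary Masur-Minsky projection distances to thick distances.

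First, I would choose the thresholds. The Masur-Minsky distance formula requires a sufficiently large threshold, which I will call $R_0$, so that $|g|$ is comparable to $\sum_{Y \subseteq \Sigma} d_Y(\tilde x, g(\tilde x))_R$ for $R \geq R_0$. Corollary \ref{cor.thick_distance} requires $T,R$ sufficiently large with $R \leq \check T = T - 2\xi$. I would pick $R \geq R_0$ large enough for both the Masur-Minsky formula and Corollary \ref{cor.thick_distance}, and then choose $T$ large enough that $R \leq T - 2\xi$ and Corollary \ref{cor.thick_distance} applies with this pair.

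Next, I invoke Theorem \ref{MM.formula} applied to the filling collections $\tilde x$ and $g(\tilde x)$ (the latter is filling because $g$ is a homeomorphism). This produces constants $A, B > 0$, depending on the generating set, on $\tilde x$, and on $R$, such that
$$|g| \;\leq\; A \sum_{Y \subseteq \Sigma} d_Y(\tilde x, g(\tilde x))_R + B$$
for every $g \in MCG(\Sigma)$.

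Then I apply Corollary \ref{cor.thick_distance} with $\tilde z = g(\tilde x)$ to obtain
$$\sum_{Y \subseteq \Sigma} d_Y(\tilde x, g(\tilde x))_R \;\leq\; (9+4R)^{cx(\Sigma)-1} \sum_{Y \subseteq \Sigma} d^{T,R}_Y(\tilde x, g(\tilde x)).$$
Concatenating these two inequalities gives the desired estimate with $C_0 = A(9+4R)^{cx(\Sigma)-1}$ and $C_1 = B$.

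There is no genuine obstacle here; the entire point of the preceding Lemma \ref{thick_distance}, Theorem \ref{thm:thick_distance}, and Corollary \ref{cor.thick_distance} was to replace threshold sums over $d_Y(\cdot,\cdot)_R$ by the thick-distance variants $d^{T,R}_Y$ at the cost of a multiplicative factor depending on $cx(\Sigma)$ and $R$. Once that reduction is in hand, the thick distance formula follows immediately by substitution into Masur-Minsky. The only subtlety to confirm is that the compatibility condition $R \leq \check T$ can be satisfied together with the lower bounds required by Theorem \ref{MM.formula}, which it plainly can since both conditions only impose lower bounds on the thresholds.
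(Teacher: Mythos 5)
Your proposal is correct and is exactly the paper's argument: the paper proves Theorem \ref{thick.MM} in one line by combining Theorem \ref{MM.formula} with Corollary \ref{cor.thick_distance}, which is precisely the chain of inequalities you spell out. Your additional care about choosing the thresholds compatibly is a reasonable elaboration of the same proof.
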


When we apply this result we will assume that $R = \check T$ and to simplify notation we set
$$d^T_Y(\tilde x, \tilde y) = d^{T, \check T}_Y(\tilde x, \tilde y).$$

\subsection{Separability in the mapping class group}
Let $\psi\in MCG(\Sigma)$ be a pseudo-Anosov. There are various equivalent characterizations, two of which are useful for us.
\begin{itemize}
\item $\psi$ has positive stable translation length on $\C(\Sigma)$.

\item $\psi$ has positive translation length on the Teichm\"uller
  space $\T(\Sigma)$ with a unique invariant axis.
\end{itemize}

If $\psi$ is a pseudo-Anosov then the orbit of any curve in $\C(\Sigma)$ will extend to a $\psi$-invariant quasi-geodesic $\gamma$ and any two such invariant quasi-geodesics will be a bounded Hausdorff distance from each other. The {\em elementary closure}, $EC(\psi)$ is the subgroup of elements $\phi \in MCG(\Sigma)$ such that $\gamma$ and $\phi(\gamma)$ are a bounded Hausdorff distance. Everything that commutes with $\psi$ is contained in $EC(\psi)$ (including powers and roots) but there may be other elements.

The following is well known.
\begin{lemma}\label{virtually cyclic}
If $\psi \in MCG(\Sigma)$ is pseudo-Anosov then $EC(\psi)$ is virtually cyclic.
\end{lemma}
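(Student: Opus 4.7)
The plan is to exploit the action of $EC(\psi)$ on the two fixed points of $\psi$ at infinity in the curve graph and reduce to Bowditch's acylindricity of the $MCG(\Sigma)$-action on $\C(\Sigma)$. Since $\psi$ acts loxodromically on the hyperbolic space $\C(\Sigma)$, its invariant quasi-geodesic $\gamma$ limits to two distinct endpoints $\mu^\pm \in \partial\C(\Sigma)$ (these are the stable/unstable laminations of $\psi$). Any $\phi \in EC(\psi)$ moves $\gamma$ a bounded Hausdorff distance, hence extends continuously to $\partial\C(\Sigma)$ and must permute the pair $\{\mu^+,\mu^-\}$. This yields a homomorphism $EC(\psi) \to \mathbb{Z}/2$; call its kernel $EC(\psi)^+$.

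Next I would show that $EC(\psi)^+$ is virtually cyclic. Every $\phi \in EC(\psi)^+$ fixes both endpoints, so its action on $\gamma$ is coarsely a translation by some signed distance $\tau(\phi) \in \mathbb{R}$; standard computations show $\tau$ is a quasi-morphism on $EC(\psi)^+$, and in fact a homomorphism up to bounded error (and a genuine homomorphism after passing to a finite-index subgroup of well-chosen coset representatives, or after dividing by a bounded amount). The image of $\tau$ is a discrete subgroup of $\mathbb{R}$: if arbitrarily small nonzero translations existed, one could find infinitely many distinct elements of $EC(\psi)^+$ coarsely fixing two points of $\gamma$ very far apart, contradicting acylindricity of $MCG(\Sigma) \curvearrowright \C(\Sigma)$ (Bowditch). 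Hence $\tau$ has cyclic image, and we obtain $EC(\psi)^+ \to \mathbb{Z}$.

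Finally, the kernel of $\tau$ consists of elements of $EC(\psi)^+$ whose orbits on $\gamma$ are bounded. Choosing any two points of $\gamma$ at distance larger than the acylindricity threshold, every element of the kernel coarsely fixes both points, so acylindricity forces the kernel to be finite. Thus $EC(\psi)^+$ is virtually $\mathbb{Z}$, and $EC(\psi)$ is virtually cyclic. The only real obstacle is the discreteness/finiteness step, but this is precisely what Bowditch's acylindricity theorem provides; alternatively, one can invoke McCarthy's result that the stabilizer in $MCG(\Sigma)$ of an unoriented pair of filling measured laminations is virtually cyclic, identifying that stabilizer with $EC(\psi)$ via the correspondence $\phi \mapsto \phi_*\{\mu^+,\mu^-\}$.
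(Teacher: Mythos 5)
Your argument is correct in outline, but it takes a genuinely different route from the paper. The paper works in Teichm\"uller space: $\psi$ has a unique invariant Teichm\"uller axis, $EC(\psi)$ preserves it and fixes its endpoints, translation length along the axis gives a homomorphism to $\R$ with discrete image, and the kernel is finite because point stabilizers in $\T(\Sigma)$ are finite. This buys two things your version has to work for: since the axis is an honest geodesic preserved by the group, the translation map is a genuine homomorphism (the action on the axis factors through $\Isom(\R)$), so there is no quasimorphism bookkeeping; and finiteness of the ``translation zero'' part comes from properness of the $MCG(\Sigma)$-action on $\T(\Sigma)$, which is classical, rather than from Bowditch's acylindricity theorem, which is a substantially deeper input. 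Your curve-graph argument is the standard one showing that loxodromic elements of acylindrical actions have virtually cyclic elementary closures (Dahmani--Guirardel--Osin), and it has the virtue of staying in $\C(\Sigma)$, where $EC(\psi)$ is actually defined, and of generalizing beyond mapping class groups. The one soft spot is the middle step: the coarse translation $\tau$ is only a quasimorphism, so ``discrete image subgroup of $\R$'' is not literally available; the clean fix is the coset argument (for each $\phi\in EC(\psi)^+$ choose $n(\phi)$ with $d(\phi(x),\psi^{n(\phi)}(x))$ uniformly bounded; acylindricity applied to a pair of far-apart points of $\gamma$ shows $\{\psi^{-n(\phi)}\phi\}$ is finite, so $\langle\psi\rangle$ has finite index in $EC(\psi)^+$), which avoids $\tau$ altogether. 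Your fallback via McCarthy's theorem on stabilizers of pairs of filling laminations is also a complete proof, and is in fact closer in spirit to the paper's Teichm\"uller-theoretic one.
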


\begin{proof}
We use the second characterization of a pseudo-Anosov. Namely the
action of $\psi$ on the Teichm\"uller space $\T(\Sigma)$ has a unique
axis and the subgroup $EC(\psi)$ will preserve the axis and fix its
endpoints at infinity. Translation length along the axis will define a
homomorphism to $\R$ with discrete image. The subgroup of $EC(\psi)$
of elements with translation length zero will fix the axis pointwise
and will therefore be finite since the stabilizer of any element in
$\T(\Sigma)$ will be finite. In particular there is a surjective map
of $EC(\psi)$ to $\ZZ$ with finite kernel. The lemma follows.
\end{proof}

We note that one can also prove this lemma by applying  a general result
(\cite[Lemma 6.5]{DGO})
which says that if a group $G$ acts on a hyperbolic space,
then the elementary closure (with respect to this action)
of an element $g\in G$ is virtually cyclic if $g$ 
is ``WPD'' (weak proper discontinuous).

Let $Y\subset \Sigma$ be a proper subsurface and let $MCG(\Sigma;Y)$ be the subgroup of the mapping class group that preserves $Y$. If $Y$ is non-annular let $\bar Y$ be the surface obtained by collapsing the components of $\partial Y$ to marked points. There is a natural homomorphism
$$MCG(\Sigma;Y) \to MCG(\bar Y).$$
The kernel of this homomorphism are mapping classes that can be represented by homeomorphisms that are the identity on $Y$. Furthermore every mapping class in the image of the homomorphism is the image of a mapping class that is the identity on the complement of $Y$ and these two types of mapping classes commute.

Given a $\psi\in MCG(\Sigma;Y)$ we say that $\psi$ is {\em pseudo-Anosov on $Y$} if its image in $MCG(\bar Y)$ is pseudo-Anosov. The {\em elementary closure with respect to $Y$}, $EC(\psi; Y)$, is the subgroup of elements $\phi \in MCG(\Sigma;Y)$ whose image in $MCG(\bar Y)$ is contained in $EC(\bar \psi)$. The image $\overline{EC}(\psi; Y)$ of $EC(\psi; Y)$ in $MCG(\bar Y)$ is a subgroup of $EC(\bar\psi)$. Note that image of $\overline{EC}(\psi;Y)$ will be infinite but it may be a proper subgroup of $EC(\bar\psi)$. In particular, $\overline{EC}(\psi;Y)$ contains an infinite cyclic group.

\begin{lemma}\label{MCG centralizer}
Let $Y$ be a non-annular subsurface and assume that $\psi\in MCG(\Sigma;Y)$ with image $\bar\psi$ in $MCG(\bar \psi)$ pseudo-Anosov. Then
$EC(\psi;Y)$ is a centralizer in $MCG(\Sigma)$.
\end{lemma}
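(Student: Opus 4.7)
My plan is to produce an element $\eta \in MCG(\Sigma)$ whose centralizer in $MCG(\Sigma)$ equals $EC(\psi;Y)$.

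First, I would record the following reduction: it suffices to exhibit $\eta \in Z(EC(\psi;Y))$ whose image $\bar\eta$ in $MCG(\bar Y)$ is pseudo-Anosov. Centrality immediately yields $EC(\psi;Y) \subseteq C_{MCG(\Sigma)}(\eta)$. For the reverse inclusion: if $h \in MCG(\Sigma)$ commutes with $\eta$, then $h$ must preserve the unique subsurface $Y$ on which the pseudo-Anosov $\bar\eta$ is supported, so $h \in MCG(\Sigma;Y)$; then $\bar h$ commutes with the pseudo-Anosov $\bar\eta$, and hence $\bar h \in EC(\bar\eta) = EC(\bar\psi)$, placing $h$ in $EC(\psi;Y)$.

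To construct $\eta$, I would use that $\overline{EC}(\psi;Y) \subseteq EC(\bar\psi)$ is virtually cyclic by Lemma \ref{virtually cyclic}. Pick $N$ large enough that $\bar\psi^N$ both lies in the center of $\overline{EC}(\psi;Y)$ and fixes each boundary component of $Y$ setwise (i.e., acts trivially on $\pi_0(\partial Y)$); both are achievable since a virtually cyclic group has a central cyclic subgroup of finite index and the permutation action factors through a finite quotient. Using that $MCG(Y,\partial Y) \to MCG(\bar Y)$ surjects onto the subgroup fixing the marked points coming from $\partial Y$, choose a lift $\eta_0$ of $\bar\psi^N$, viewed as an element of $MCG(\Sigma)$ supported on $Y$ (identity on $\Sigma \setminus Y$). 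Because $\eta_0$ is supported on $Y$ and fixes $\partial Y$ pointwise, it commutes with the entire kernel $K_Y = \ker(MCG(\Sigma;Y) \to MCG(\bar Y))$, which is generated by classes supported on $\Sigma \setminus Y$ together with Dehn twists around $\partial Y$. For $\phi \in EC(\psi;Y)$, the conjugate $\phi \eta_0 \phi^{-1}$ is supported on $\phi(Y)=Y$ and projects to $\bar\eta_0$, so the commutator $c(\phi) := \phi\eta_0\phi^{-1}\eta_0^{-1}$ lies in $\ker(MCG(Y,\partial Y) \to MCG(\bar Y)) \cong \mathbb{Z}^{|\partial Y|}$, the group of Dehn twists around $\partial Y$.

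The main obstacle I expect is promoting $\eta_0$ to a genuinely central element. The assignment $c \colon EC(\psi;Y) \to \mathbb{Z}^{|\partial Y|}$ is a crossed homomorphism for the permutation action on boundary components; it vanishes on $K_Y$ and descends to a cocycle on the virtually cyclic quotient $\overline{EC}(\psi;Y)$. Replacing $\eta_0$ by $\eta_0 b$ for $b \in \mathbb{Z}^{|\partial Y|}$ modifies $c$ by the coboundary $\phi \mapsto \phi \cdot b - b$, so the real question is whether $[c]$ vanishes in $H^1(\overline{EC}(\psi;Y), \mathbb{Z}^{|\partial Y|})$. Since $c(\bar\eta_0) = [\eta_0,\eta_0] = 0$ and $\bar\eta_0$ is a nonzero power of the generator of the infinite cyclic part of the kernel of the permutation action (which has finite index in $\overline{EC}(\psi;Y)$), the restriction of $c$ to this finite-index subgroup is a homomorphism into $\mathbb{Z}^{|\partial Y|}$ annihilating a generator, hence zero; the five-term exact sequence for this normal subgroup then places $[c]$ in the image of $H^1$ of the finite permutation quotient with coefficients in $\mathbb{Z}^{|\partial Y|}$, which is torsion. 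Replacing $\eta_0$ by a sufficiently high power thus annihilates $[c]$, after which multiplying by an appropriate boundary Dehn twist absorbs the remaining coboundary and produces the desired central $\eta$ with pseudo-Anosov image.
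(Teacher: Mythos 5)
Your proposal is correct, but it reaches the conclusion by a different (and heavier) route than the paper. Both arguments share the same skeleton: take a power of an element supported on $Y$ whose image in $MCG(\bar Y)$ is an infinite-order element of $\overline{EC}(\psi;Y)$, arrange that this image is central in $\overline{EC}(\psi;Y)$, and observe that all remaining commutators land in the free abelian group generated by Dehn twists about $\partial Y$. The divergence is in how that last obstruction is killed. The paper decomposes an arbitrary element of $EC(\psi;Y)$ into a piece trivial on $Y$, a piece $\phi_1$ supported on $Y$ with finite image, and a power of the chosen element, and then disposes of the only nontrivial case by noting that $[\phi_1,\phi^k]$ is a multitwist (hence trivial or of infinite order) satisfying $[\phi_1,\phi^k]^{\ell}=[\phi_1^{\ell},\phi^k]=1$; this shows the naive lift $\phi^k$ is \emph{already} central, with no further correction. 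You instead organize the commutator as a crossed homomorphism into the twist lattice, identify the obstruction as a class in $H^1\bigl(\overline{EC}(\psi;Y),\ZZ^{|\partial Y|}\bigr)$, show it is torsion via restriction--inflation to the finite permutation quotient, and then pass to a further power and adjust by a boundary multitwist. Your version is sound (the coefficient module is torsion-free, which is what your vanishing argument for the restricted homomorphism needs, and replacing $\eta_0$ by $\eta_0^m$ does multiply the cocycle by $m$ since the twist group commutes with $\eta_0$), and it has the merit of making explicit both where the obstruction lives and the reverse inclusion $C_{MCG(\Sigma)}(\eta)\subseteq EC(\psi;Y)$ via canonical reduction systems, which the paper only asserts. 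What it costs is the cohomological machinery and the extra power-and-twist correction, which the paper's direct torsion argument shows are not actually needed. One cosmetic caveat: the twist group is free abelian of rank equal to the number of isotopy classes in $\Sigma$ of components of $\partial Y$, which may be less than $|\partial Y|$; this does not affect your argument.
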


\begin{proof}
Choose $\phi\in MCG(\Sigma;Y)$ such that $\phi$ is the identity on the complement of $Y$ and its image $\bar \phi$ in $MCG(\bar Y)$ is a primitive element of infinite order in $\overline{EC}(\psi;Y)$. By Lemma \ref{virtually cyclic}, $\overline{EC}(\psi;Y)$ is virtually cyclic so there is a short exact sequence
$$1\to F\to\overline{EC}(\psi;Y)\to \ZZ\to 1$$
where $F$ is finite and the subgroup $\langle \bar\phi\rangle \subset \overline{EC}(\psi;Y)$ surjects onto $\ZZ$. The subgroup $\langle \bar\phi\rangle$ acts on the finite group $F$ by conjugation so there is a $k$ such that conjugation by ${\bar\phi}^k$ is the identity on $F$. That is ${\bar\phi}^k$ commutes with every element of $F$. As every element of $\overline{EC}(\psi;Y)$ is a product of an element of $F$ and a power of $\bar\phi$ this implies that ${\bar\phi}^k$ commutes with every element of $\overline{EC}(\psi;Y)$ and the centralizer of ${\bar\phi}^k$ in $\overline{EC}(\psi;Y)$ is the entire group.

We now claim that the centralizer of $\phi^k$ in $MCG(\Sigma)$ is $EC(\psi;Y)$. Any element that commutes with $\phi^k$ will be contained in $EC(\psi;Y)$ so we only need to show that every element of $EC(\psi;Y)$ commutes with $\phi^k$. We can decompose every element of $EC(\psi;Y)$ as a composition of three elements:
\begin{itemize}
\item a mapping class $\phi_0$ that is the identity on $Y$;

\item a mapping class $\phi_1$ that is the identity on the on the complement of $Y$ and has finite image in $MCG(\bar Y)$; 

\item a power of $\phi$.
\end{itemize}
As $\phi^k$ will commute with both $\phi_0$ and any power of $\phi$ we only need to show that $\phi^k$ commutes with $\phi_1$. The image ${\bar\phi}_1$ of $\phi_1$ in $MCG(\bar Y)$ has finite order so there exists a $\ell$ such that ${\bar\phi}_1^\ell$ is the identity. Therefore $\phi_1^\ell$ has a representative that is the identity on $Y$ and therefore commutes with $\phi^k$. 

The image of the commutator $[\phi_1, \phi^k]$ in $MCG(\bar Y)$ is
$[{\bar\phi}_1, {\bar\phi}^k]$ and is trivial as these two elements
commute. This implies that $[\phi_1, \phi^k]$ has a representative
that is the identity on $Y$. But it also has a representative that is
the identity on the complement of $Y$ (as both $\phi_1$ and $\phi^k$
do). This implies that $[\phi_1, \phi^k]$ is a composition of Dehn
twists in $\partial Y$. In particular it is either trivial or of
infinite order. As $[\phi_1, \phi^k]$ commutes with both $\phi_1$ and
$\phi^k$ a straightforward calculation shows that $[\phi^j_1, \phi^k]
= [\phi_1, \phi^k]^j$ and therefore if $[\phi_1, \phi^k]$ is
non-trivial then it is of infinite order. However, we observed above
that $\phi^\ell_1$ and $\phi^k$ commute and therefore $[\phi_1^\ell,
  \phi^k]$ is trivial. This implies that $\phi_1$ and $\phi^k$
commute, completing the proof.
\end{proof}

We say that a quasi-geodesic $\gamma\subset \C(Y)$ is an {\em axis} if
there is a $\psi\in MCG(\Sigma;Y)$ and $\gamma$ is
$EC(\psi;Y)$-invariant. Every $\psi$ that is pseudo-Anosov on $Y$ has
an axis that can be obtained by taking the $\overline{EC}(\psi;Y)$
translates of a $\psi$-invariant bi-infinite path.

To match the notation from Theorem \ref{thm1} we let $C(\gamma)\subset MCG(\Sigma)$ be the stabilizer of $\gamma$. When $\gamma$ is an axis for $\psi$ we have $C(\gamma) = EC(\psi;Y)$.

By \cite{grossman}, the mapping class group is residually finite. Then Lemma \ref{separability} combined with Lemmas \ref{MCG centralizer}:
\begin{cor}\label{mcg_separable}
If $\gamma \subset \C(Y)$ is an axis then $C(\gamma)$ is is separable.
\end{cor}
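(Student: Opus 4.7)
The plan is to chain together three ingredients already set up in the section: the identification of $C(\gamma)$ with an elementary closure of a pseudo-Anosov on $Y$, Lemma \ref{MCG centralizer} which realizes such an elementary closure as a centralizer in $MCG(\Sigma)$, and Lemma \ref{separability} applied to $MCG(\Sigma)$, which is residually finite by Grossman \cite{grossman}.

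First I would unpack the definition of an axis. By the definition immediately preceding the corollary, $\gamma\subset\C(Y)$ being an axis means that there is some $\psi\in MCG(\Sigma;Y)$ which is pseudo-Anosov on $Y$ such that $\gamma$ is $EC(\psi;Y)$-invariant, and the paper records $C(\gamma)=EC(\psi;Y)$. Thus the corollary reduces to showing that $EC(\psi;Y)$ is separable in $MCG(\Sigma)$.

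Next I would invoke Lemma \ref{MCG centralizer}: assuming $Y$ is non-annular, it produces an explicit element $\phi^k\in MCG(\Sigma)$ whose centralizer is exactly $EC(\psi;Y)$. Finally, Grossman's theorem that $MCG(\Sigma)$ is residually finite lets me apply Lemma \ref{separability} to $\phi^k$, giving that $C_{MCG(\Sigma)}(\phi^k)=EC(\psi;Y)=C(\gamma)$ is separable.

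There is no real obstacle once the previous lemmas are in place; the only point that needs care is that Lemma \ref{MCG centralizer} is stated for non-annular $Y$. For annular $Y$ the curve graph $\C(Y)$ is quasi-isometric to $\ZZ$ and an axis is stabilized by a virtually cyclic group generated (up to finite index) by a Dehn twist $T_\alpha$; a short separate argument, again based on Lemma \ref{separability} applied to a suitable power of $T_\alpha$ whose centralizer in $MCG(\Sigma)$ coincides with $C(\gamma)$, handles this case. In either case the conclusion is the same: $C(\gamma)$ is separable, completing the proof.
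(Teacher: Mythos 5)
Your proof is correct and follows exactly the route the paper intends: Grossman's residual finiteness of $MCG(\Sigma)$, Lemma \ref{separability} for centralizers, and Lemma \ref{MCG centralizer} identifying $C(\gamma)=EC(\psi;Y)$ with the centralizer of $\phi^k$. Your aside on the annular case matches the paper's own remark that this case is true but not needed, so nothing further is required.
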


Note that to this point we have not discussed the case when $Y$ is an annulus. Here all of $\C(Y)$ is a quasi-geodesic and will play the role of an axis. While it is true that the stabilizer of $\C(Y)$ is separable (\cite{leininger}) we will not use this.

\subsection{Projection axioms for axes in curve graphs}
We begin with our setup for mapping class groups:
\begin{itemize}
\item $\bY$ is a collection of transverse subsurfaces. 

\item $\tilde\A_\bY$ is the collection of all quasi-geodesics in all the
  curve graphs $\C(Y)$, $Y\in\bY$, with uniform
  constants. (We fix uniform quasi-geodesic constants to start with.)
  For each subsurface $Y$ we let $\tilde\A_Y$ be the
  subcollection contained in $\C(Y)$.

\item $\A_\bY\subset \tilde\A_\bY$ is a subcollection.
%
\item For $\alpha\in \A_X$ and $\beta\in\A_X$ we define $\pi_\alpha(\beta)$ to be uniformly close (in the Hausdorff metric) to the nearest point projection of $\alpha$ to $\beta$.

\item For $\alpha\in\A_X$ and $\beta\in\A_Z$ when $X\neq Z$ we define $\pi_\alpha(\beta)$ to be uniformly close to the nearest point projection of $\pi_X(Z)$ to $\alpha$.
\end{itemize}

For mapping class groups the coarse constants are the quasi-geodesic constants above along with the  projection constant and BGIT constant from Theorem \ref{axioms-hold} and the hyperbolicity constant for curve graphs.
\begin{thm}\label{surface axioms}
Fix $\theta>0$. Then there exists $\chi>0$, depending only on the coarse constants and $\theta$, such that if $\diam \pi_\alpha(\beta)\le \theta$ whenever $\alpha$ and $\beta$ are distinct elements in the same $\A_Y$ then $(\A, \{\pi_\gamma\})$ satisfies the projection axioms with projection constant $\chi$.
\end{thm}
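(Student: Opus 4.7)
The plan is to verify (P0), (P1), (P2) for $\A$ by a case analysis on how the three axes $\alpha,\beta,\gamma$ in question distribute among the curve graphs $\C(Y)$, $Y\in\bY$. Each distance $d_\delta(\cdot,\cdot)$ will reduce, up to an additive error depending only on the coarse constants and the subsurface projection constant $\xi$ from Theorem~\ref{axioms-hold}, either to a computation inside a single hyperbolic space $\C(Y)$ (handled by Theorem~\ref{hyperbolic projections}) or to a subsurface projection distance $d_Y(X,Z)$ on $\bY$ (handled by Theorem~\ref{axioms-hold}). Taking $\chi$ to be the maximum of the various constants so produced will then finish the proof.

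Axiom (P0) is immediate. When $\alpha,\beta$ lie in the same $\A_Y$ the hypothesis gives the bound $\theta$; when $\alpha\in\A_X$ and $\beta\in\A_Z$ with $X\neq Z$, the projection $\pi_\alpha(\beta)$ is, up to the Hausdorff error in its definition, the image of the bounded set $\pi_X(Z)$ under the coarsely Lipschitz nearest-point projection to $\alpha$, hence has diameter bounded by $\xi+O(1)$.

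For (P1) and (P2) three cases arise. In the \emph{uniform} case, all three axes lie in a single $\A_Y$: Theorem~\ref{hyperbolic projections} applied inside $\C(Y)$ gives the axioms immediately, since the hypothesis provides the $\theta$-bound on pairwise projections. In the \emph{fully mixed} case, $\alpha\in\A_X$, $\beta\in\A_Y$, $\gamma\in\A_Z$ with $X,Y,Z$ pairwise distinct: the distance $d_\alpha(\beta,\gamma)$ is the diameter of the projection of $\pi_X(Y)\cup\pi_X(Z)$ to $\alpha$, which differs from $d_X(Y,Z)$ by an additive error controlled by $\diam\pi_X(Y)$ and $\diam\pi_X(Z)$, each of which is at most $\xi$. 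Hence (P1) and (P2) for the triple reduce to the corresponding facts for $\{X,Y,Z\}\subset\bY$ supplied by Theorem~\ref{axioms-hold}.

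The subtle case is the \emph{partially mixed} one, say $\alpha,\gamma\in\A_Y$ and $\beta\in\A_X$ with $X\neq Y$. The key observation is that $\pi_\beta(\alpha)$ and $\pi_\beta(\gamma)$ are both defined as the nearest-point projection of the single set $\pi_X(Y)$ to $\beta$, so $d_\beta(\alpha,\gamma)\le\diam\pi_X(Y)+O(1)\le\xi+O(1)$ holds unconditionally. The remaining distances $d_\alpha(\beta,\gamma)$ and $d_\gamma(\alpha,\beta)$ take place inside $\C(Y)$ and are precisely the projection data among the three quasi-convex subspaces $\alpha$, $\gamma$, and the bounded set $W:=\pi_Y(X)$, whose pairwise projections in $\C(Y)$ are bounded by $\max(\theta,\xi+O(1))$. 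Proposition~\ref{hyperbolic axioms} applied to this enriched collection then yields (P1) and the part of (P2) in which $\gamma$ varies in $\A_Y$; the (P2) contributions with $\gamma\in\A_W$ for $W\notin\{X,Z\}$ follow from (P2) for $\bY$ via Theorem~\ref{axioms-hold}, combined with a within-$\C(W)$ application of Theorem~\ref{hyperbolic projections} to $\A_W\cup\{\pi_W(X),\pi_W(Z)\}$. The main obstacle is this partially mixed case, which requires treating the bounded set $\pi_Y(X)$ as a degenerate ``axis'' in $\C(Y)$ so that the hyperbolic-space projection axioms can be invoked uniformly.
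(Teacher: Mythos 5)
Your proposal is correct and follows essentially the same route as the paper: the same three-way case analysis on how the axes distribute among the curve graphs, the same reduction of the mixed cases to Theorem \ref{axioms-hold} via the coarse Lipschitz property of nearest-point projections, and the same device of adjoining the bounded sets $\pi_Y(X)$, $\pi_Y(Z)$ to $\A_Y$ as degenerate quasi-convex sets so that Proposition \ref{hyperbolic axioms} handles the partially mixed case and the within-$\C(Y)$ part of (P2). The only caveat is that in the fully mixed case only the upper bound $d_\alpha(\beta,\gamma)\lesssim d_X(Y,Z)$ is true and needed, not a two-sided comparison, but this does not affect the argument.
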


\begin{proof}
If $\gamma_0,\gamma_1\in \A_Y$ then (P0) holds by assumption. If $\gamma_0$ and $\gamma_1$ are in distinct $\A_Y$ then (P0) holds by Theorem \ref{axioms-hold}.

For the remaining two axioms we first observe:
\begin{enumerate}[($*$)]
\item If $\gamma_0,\gamma_1,\gamma_2$ are in $\A_{Y_0}, \A_{Y_1}, \A_{Y_2}$ and the $Y_0$ and $Y_2$ are distinct from $Y_1$ then $d_{\gamma_1}(\gamma_0,\gamma_2)$ is coarsely bounded above by $d_{Y_1}(Y_0, Y_2)$.
\end{enumerate}
This follows from the fact that nearest point projections in $\delta$-hyperbolic spaces are coarsely Lipschitz. 

When all three $Y_i$ are distinct then (P1) follows directly from $(*)$ and Theorem \ref{axioms-hold}. If the three $Y_i$ are all equal then (P1) follows from Proposition \ref{hyperbolic axioms}. The last case is when $Y_0 = Y_1$ but they are distinct from $Y_2$. In this case $d_{\gamma_2}(\gamma_0, \gamma_1)$ will be uniformly bounded $(*)$. Applying Proposition \ref{hyperbolic axioms} to $\gamma_0, \gamma_1$ and $\pi_{Y_0=Y_1}(Y_2)$ we see that at most one of $d_{\gamma_0}(\gamma_1, \gamma_2)$ and $d_{\gamma_1}(\gamma_0,\gamma_2)$ are large proving (P1) in this final case.

Now we prove (P2). Fix $\alpha\in\A_X$ and $\beta\in \A_Z$. If $Y$ is distinct from $X$ and $Z$  and $\gamma\in\A_Y$ with $d_\gamma(\alpha, \beta)$ large then by $(*)$ we have that $d_Y(X,Z)$ is large. Therefore Theorem \ref{axioms-hold} implies that there are finitely many $Y$ such that $\A_Y$ contains a $\gamma$ with $d_\gamma(\alpha, \beta)$ large. Applying Proposition \ref{hyperbolic axioms} to the collection of quasi-convex sets $\A_Y\cup\{\pi_Y(X), \pi_Y(Z)\}$ we see that in each such $Y$ there are finitely many $\gamma\in\A_Y$ with $d_\gamma(\alpha, \beta)$ large. Similarly we get finitely many $\gamma\in \A_X$ with $d_\gamma(\alpha, \beta)$ large by applying Proposition \ref{hyperbolic axioms} to $\A_X\cup \{\pi_X(Z)\}$ if $X$ and $Z$ are distinct or simply to $A_X$ if $X=Z$. This proves (P2).
\end{proof}

Next we prove the version of Proposition \ref{main estimate} that we need for the mapping class group.
\begin{prop}\label{curve estimate}
Fix  $K>0$. Then there exists an $T>0$ depending only on the coarse constants and $K$ such that the following holds.
Assume that
\begin{itemize}
\item $\tilde x$, $\tilde y$ are filling collections in $\C(\Sigma)$;

\item $\hat x=(x_1,x_2,\cdots,x_n)$ and $\hat y=(y_1,y_2,\cdots,y_n)$
  are $n$-tuples of curves with each $x_i$ and $y_i$ lying in curve graphs $\C(X_i)$ and $\C(Y_i)$ with $X_i$ and $Y_i$ in $\bY$; 

\item any tight geodesic segment in $\C(Y)$, with $Y\in\bY$, that is $\hat T$-thick
  and of length $\check T$ is contained in some $\gamma \in\A_Y$;

\item $\A_\bY$ is partitioned into $\A_1\sqcup \dots \sqcup \A_n$.
\end{itemize}
Then
$$\sum_{Y\in\bY} d_Y^{T}(\tilde x, \tilde y) \le 2\sum_i\sum_{\gamma\in\A_i} d_\gamma(x_i, y_i)_K  + \frac{2T}\xi\sum_{Y\in\bY} \left(d_Y(\tilde x,  \hat x)_\xi + d_Y(\tilde y,\hat y)_\xi\right).$$
\end{prop}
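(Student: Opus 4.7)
The argument is modeled on Proposition \ref{main estimate}, with the ambient hyperbolic space $\mathcal Y$ replaced by the collection of curve graphs $\C(Y)$ for $Y\in\bY$, and the axis from $x$ to $y$ replaced by a tight geodesic realizing the thick distance $d^T_Y(\tilde x,\tilde y)$.

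For each $Y\in\bY$ with $d^T_Y(\tilde x,\tilde y)>0$, fix a tight geodesic $g_Y$ in $\C(Y)$ from some $u_Y\in\pi_Y(\tilde x)$ to some $v_Y\in\pi_Y(\tilde y)$ together with disjoint subsegments $J^Y_1,\dots,J^Y_{k_Y}$ of length at least $\check T$ having $T$-thick endpoints and realizing $d^T_Y(\tilde x,\tilde y)$. Lemma \ref{tight-bound} guarantees that every vertex of each $J^Y_j$ is $\hat T$-thick, so by the third hypothesis each length-$\check T$ sub-segment of $J^Y_j$ is contained in some axis $\gamma\in\A_Y$. Covering $J^Y_j$ by consecutive such sub-segments and summing over $j$ and $Y$ gives
\begin{equation*}
\sum_{Y\in\bY} d^T_Y(\tilde x,\tilde y)\;\le\;\sum_{Y\in\bY}\sum_{\gamma\in\A_Y}\diam(g_Y\cap\gamma)\;=\;\sum_i\sum_{\gamma\in\A_i}\diam(g_{Y(\gamma)}\cap\gamma),
\end{equation*}
where in the last step we have re-indexed via the partition $\A_\bY=\bigsqcup_i\A_i$ and $Y(\gamma)$ denotes the unique $Y$ with $\gamma\in\A_Y$.

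Next bound each $\diam(g_Y\cap\gamma)$ for $\gamma\in\A_Y$. When this diameter exceeds $K$, strong contraction of the quasi-geodesic $\gamma$ in the $\delta$-hyperbolic $\C(Y)$ forces the nearest point projections of $u_Y$ and $v_Y$ onto $\gamma$ to lie within uniformly bounded neighborhoods of the two endpoints of $g_Y\cap\gamma$; choosing $K$ large enough and absorbing the additive coarse error into a factor of $2$ therefore yields
\begin{equation*}
\diam(g_Y\cap\gamma)\;\le\;2\,d_\gamma(\tilde x,\tilde y).
\end{equation*}
The triangle inequality for $d_\gamma$ then gives $d_\gamma(\tilde x,\tilde y)\le d_\gamma(x_i,y_i)+d_\gamma(\tilde x,x_i)+d_\gamma(\tilde y,y_i)$, producing the principal term $2\sum_i\sum_{\gamma\in\A_i}d_\gamma(x_i,y_i)_K$ upon thresholding, together with two error terms.

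It remains to control the errors. For fixed $Y$, an axis $\gamma\in\A_Y$ contributes a nonzero $d_\gamma(\tilde x,x_i)$ above the projection constant only when its projection in $\C(Y)$ has a large diameter overlap with a geodesic from $\pi_Y(\tilde x)$ to $\pi_Y(\hat x)$. By the projection axioms of Theorem \ref{surface axioms}, applied inside $\C(Y)$, each such axis carries a sub-segment of that geodesic of length at least $\xi$ and at most $\check T\le T$, so their number is at most $d_Y(\tilde x,\hat x)_\xi/\xi$, and their total contribution after the doubling above is bounded by $(2T/\xi)\,d_Y(\tilde x,\hat x)_\xi$. The symmetric computation for $\tilde y,\hat y$ and summation over $Y\in\bY$ produces the stated error. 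The main obstacle lies in this last step, as one must carefully translate the triangle-inequality errors, which live on quasi-axes in various curve graphs $\C(Y)$ and for $X_i\ne Y$ pass through the subsurface projection $\pi_Y(X_i)$, into a bound controlled by the single subsurface projection distance $d_Y(\tilde x,\hat x)_\xi$ while ensuring each axis is counted only once.
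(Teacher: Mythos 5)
Your overall architecture is the paper's: realize $d^T_Y(\tilde x,\tilde y)$ by a tight geodesic with $T$-thick subsegments, use Lemma \ref{tight-bound} and the covering hypothesis to write the thick length as a sum of overlaps with axes, and convert overlaps to projection distances via contraction, with an error of the shape $\frac{2T}{\xi}(d_Y(\tilde x,\hat x)_\xi+d_Y(\tilde y,\hat y)_\xi)$. But you diverge at exactly the step you flag as ``the main obstacle,'' and the route you sketch there does not close. The paper handles the change of basepoints from $\tilde x,\tilde y$ to $\hat x,\hat y$ \emph{before} measuring overlaps: it truncates $\alpha_Y$ by deleting the $\max\{d_Y(\tilde x,\hat x)-\xi,0\}$ and $\max\{d_Y(\tilde y,\hat y)-\xi,0\}$ neighborhoods of its endpoints, so that on the truncated geodesic $\tilde\alpha_Y$ the contraction estimate gives $\diam(\tilde\alpha_Y\cap\gamma)\le 2d_\gamma(x_i,y_i)$ directly with the $\hat x,\hat y$ basepoints, and the total error is simply the discarded length plus $2\check T$ for the at most two partially cut subsegments --- manifestly linear in $d_Y(\tilde x,\hat x)+d_Y(\tilde y,\hat y)$.

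You instead keep the full geodesic, prove $\diam(g_Y\cap\gamma)\le 2d_\gamma(\tilde x,\tilde y)$, and push the basepoint change through a per-axis triangle inequality, so your error is $\sum_\gamma\bigl(d_\gamma(\tilde x,x_{i})+d_\gamma(\tilde y,y_{i})\bigr)$ summed over the covering axes. The two quantitative claims you use to bound this are unjustified. First, the overlap of an axis with a geodesic from $\pi_Y(\tilde x)$ to $\pi_Y(\hat x)$ is \emph{not} bounded by $\check T$: an axis may fellow-travel that geodesic for an arbitrarily long stretch, and correspondingly $d_\gamma(\tilde x,x_i)$ is only bounded by $d_Y(\tilde x,\hat x)$ plus a coarse constant, not by $T$. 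Second, the count ``at most $d_Y(\tilde x,\hat x)_\xi/\xi$'' needs the overlaps of distinct contributing axes to be essentially disjoint; the $\theta$-bound on pairwise projections holds only within a single color class $\A_i$, whereas the covering axes of $g_Y$ can be spread across all the classes meeting $\A_Y$, so nothing prevents many of them from fellow-traveling the same initial segment. What your decomposition actually yields is roughly $d_Y(\tilde x,\hat x)/\check T$ covering axes near the $\tilde x$-end, each contributing up to $d_Y(\tilde x,\hat x)$, i.e.\ a bound quadratic in $d_Y(\tilde x,\hat x)$, which is not the stated linear error term. (There is also a secondary mismatch: thresholding $d_\gamma(\tilde x,\tilde y)$ at $K$ does not give $d_\gamma(x_i,y_i)\ge K$, so the dropped terms must also be absorbed into the same error, compounding the problem.) The fix is the paper's truncation device rather than a per-axis triangle inequality.
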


\begin{proof}
We choose $T$ in a way similar to the choice of $L$ in the proof of Proposition \ref{main estimate}:
Fix $x,x',y,y'\in\C(Y)$ 
such that $d_{\C(Y)}(x, x') \le d_Y(\tilde x, \hat x)$,  $d_{\C(Y)}(y,y') \le d_Y(\tilde y, \hat y)$ and
$$d_{\C(Y)}(x,y) > d_Y(\tilde x, \hat x) + d_Y(\tilde y, \hat y).$$
and let $\tilde\alpha$ be the subsegment of a geodesic from $x$ to $y$
where the $\max\{d_Y(\tilde x, \hat x) - \xi,0\}$ and $\max\{d_Y(\tilde y,\hat y)-\xi,0\}$
neighborhoods of each endpoint have been removed. Note that
$d_Y(\tilde x,\hat x) \le \xi$ and $d_Y(\tilde y, \hat y)\le \xi$ for
all but finitely many $Y$. Then the nearest point projection of $x$
and $x'$ to any subsegment of $\tilde\alpha$ will be in a uniformly
bounded neighborhood of the endpoint closest to $x$ with a similar
statement for the projection of $y$ and $y'$.
As in Proposition \ref{main estimate}, for any $\beta\in\A_\bY$ we have a uniform upper bound on 
$$\diam(\tilde\alpha\cap\beta)-d_\beta(x',y')$$
and we can choose $T$ such that if $\tilde\alpha\cap \beta$ contains a path of length $\check T$ then
\begin{itemize}
\item $\diam(\tilde\alpha\cap\beta) \le 2d_\beta( x', y')$ and
\item $\diam(\tilde\alpha\cap \beta) \ge 2K$.
\end{itemize}

For each subsurface $Y$ where $d^{T}_Y(\tilde x,\tilde y)>0$ we let $\alpha_Y$ be a tight geodesic between $x\in\pi_Y(\tilde x)$ and $y\in\pi_Y(\tilde y)$ that realizes $d^{T}_Y(\tilde x,\tilde y)$. In particular there are disjoint subsegments $\alpha^Y_1, \dots, \alpha^Y_{n_Y}$ of $\alpha_Y$ such that endpoints of $\alpha^Y_i$ are $T$-thick and $d^T_Y(\tilde x, \tilde y)$ is the sum of the lengths of the $\alpha^Y_i$. By Lemma \ref{tight-bound}, each subsegment of length $\check T$ in each $\alpha^Y_i$ will have endpoints that are $\hat T$-thick and therefore each such subsegment will be contained in $\gamma\in \A_\bY$. Let $\tilde\alpha_Y$ be obtained by removing the $d_Y(\tilde x,  \hat x) - \xi$ and $d_Y(\tilde y, \hat y)-\xi$ neighborhoods of each endpoint of $\alpha_Y$. By the above estimate if $\gamma \in \A_i$ and $\tilde\alpha_Y \cap \gamma$ contains a path of length $\check T$ then
$$K\le \frac12\diam(\tilde\alpha_Y\cap \gamma) \le d_\gamma(x_i, y_i).$$
Let $\mathcal I$ be the indices $i$ such that $\diam(\tilde\alpha_Y\cap \alpha^Y_i) \ge \check T$. Then
$$\sum_{i\in\mathcal I}\diam(\tilde\alpha_Y\cap\alpha^Y_i) \le 2\sum_j\sum_{\gamma\in\A_j\cap\A_Y} d_\gamma(x_j, y_j)_K.$$
To complete the proof we will show that
$$d^T_Y(\tilde x, \tilde y) - \sum_{i\in\mathcal I}\diam(\tilde\alpha_Y\cap\alpha^Y_i)\le \frac{2T}{\xi} \left(d_Y(\tilde x,\hat x)_\xi + d_Y(\tilde y, \hat y)_\xi\right).$$
If $d_Y(\tilde x, \hat x)_\xi = d_Y(\tilde y,\hat y)_\xi=0$  then $\alpha_Y=\tilde\alpha_Y$ and the two terms are equal and the difference is zero proving the bound in this case. If both $d_Y(\tilde x, \hat x) \ge \xi$ and $d_Y(\tilde y,\hat y) \ge \xi$ then
$$d^T_Y(\tilde x, \tilde y) - \sum_{i\in\mathcal I}\diam(\tilde\alpha_Y\cap\alpha^Y_i)\le d_Y(\tilde x, \hat x) + d_Y(\tilde y,\hat y) -2\xi + 2\check T.$$
Note that $2\check T$ on the right comes from the fact that there may be two (but no more) $\alpha^Y_i$ that intersect $\tilde\alpha_Y$ in a segment of length $<\check T$. Similarly if only $d_Y(\tilde x, \hat x) \ge \xi$ then
$$d^T_Y(\tilde x, \tilde y) - \sum_{i\in\mathcal I}\diam(\tilde\alpha_Y\cap\alpha^Y_i)\le d_Y(\tilde x, \hat x) -\xi + \check T.$$
If only $d_Y(\tilde y, \hat y) \ge \xi$ the roles of $x$ and $y$ are swapped in the above inequality. If we combine these bounds with the fact that if $C\ge \xi$ then
$$C-\xi + \check T \le \frac{2T}\xi C$$
we get the desired bound in all cases.

The proof is then completed by summing the inequality
$$d^T_Y(\tilde x,\tilde y) \le 2\sum_j\sum_{\gamma\in \A_j\cap\A_Y} d_\gamma(x_j, y_j)_K + \frac{2T}{\xi}\left(d_Y(\tilde x, \hat x)_\xi + d_Y(\tilde y, \hat y)_\xi\right)$$
over all $Y\in\bY$.
\end{proof}

\subsection{Axes}

Here is our replacement for Theorem \ref{quasi-axes} in the setting of mapping class groups.
\begin{thm}\label{constructing pAs}
There exists a $MCG(\Sigma)$-invariant collection of uniform quasi-geodesics $\tilde\A$ with $\tilde\A_Y$ the subcollection of $\tilde\A$ contained in $\C(Y)$ such that
\begin{itemize}
\item if $Y$ is an annulus then $\tilde\A_Y= \{\C(Y)\}$;

\item if $Y$ is non-annular then $\tilde\A_Y$ is a collection of axes and every geodesic segment $\sigma \in \C(Y)$ of length $\ge 3$ is contained in some $\gamma\in \tilde \A_Y$.
\end{itemize}
\end{thm}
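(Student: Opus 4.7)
The plan is to construct $\tilde\A$ case-by-case, annular versus non-annular $Y$, and then take the $MCG(\Sigma)$-orbit to ensure equivariance. For an annular $Y$ we simply set $\tilde\A_Y=\{\C(Y)\}$: the curve graph of an annulus is quasi-isometric to $\ZZ$ with uniform constants, so the first bullet is automatic.

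For non-annular $Y$ I will construct, for every geodesic segment $\sigma\subset\C(Y)$ of length $n\ge 3$, an axis $\gamma_\sigma\in\tilde\A_Y$ containing $\sigma$. Writing $a,b$ for the endpoints of $\sigma$, the inequality $d_{\C(Y)}(a,b)\ge 3$ forces $a$ and $b$ to fill $Y$, so Thurston's construction yields pseudo-Anosov elements of $MCG(\Sigma;Y)$ of the form $\phi=T_a^N T_b^{-N}$ for $N$ sufficiently large. I will take $\psi_\sigma=\phi^k$ with $k$ chosen so that the stable translation length of $\psi_\sigma$ on $\C(Y)$ is comparable to $n$, and define
$$\gamma_\sigma \;=\;\bigcup_{j\in\ZZ}\bigl(\psi_\sigma^j(\sigma)\cup\psi_\sigma^j(\tau)\bigr),$$
where $\tau$ is a shortest geodesic in $\C(Y)$ joining the right endpoint of $\sigma$ to the initial vertex of $\psi_\sigma(\sigma)$. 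By construction $\sigma\subset\gamma_\sigma$, and the local-to-global principle for quasi-geodesics in the uniformly $\delta$-hyperbolic space $\C(Y)$ shows that $\gamma_\sigma$ is a quasi-geodesic with constants depending only on $\Sigma$.

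To achieve $MCG(\Sigma)$-equivariance I will make the assignment $\sigma\mapsto\psi_\sigma$ itself equivariant. By Theorem \ref{yohsuke}, curves at bounded distance in a curve graph have bounded geometric intersection, so $MCG(\Sigma)$ has only finitely many orbits of triples $(Y,\sigma)$ with $Y$ non-annular and $\sigma\subset\C(Y)$ a length-$n$ geodesic, for each fixed $n$. I choose one $\psi_\sigma$ per orbit and extend by $\psi_{\phi(\sigma)}:=\phi\psi_\sigma\phi^{-1}$ and $\gamma_{\phi(\sigma)}:=\phi(\gamma_\sigma)$. Setting $\tilde\A_Y=\{\gamma_\sigma\mid\sigma\subset\C(Y),\,|\sigma|\ge 3\}$ and $\tilde\A=\bigcup_Y\tilde\A_Y$ then gives the required $MCG(\Sigma)$-invariant collection, with both bullets holding by construction.

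The hard part will be securing \emph{uniform} quasi-geodesic constants across the whole family $\{\gamma_\sigma\}$. This reduces to two uniform inputs: first, Thurston's construction should yield a pseudo-Anosov on $Y$ with stable translation length on $\C(Y)$ bounded below by a universal positive constant (so that the joining geodesic $\tau$ has length bounded independently of $n$ and $Y$); second, the hyperbolicity constant $\delta$ of $\C(Y)$ should be uniform across subsurfaces of $\Sigma$. Both are available in the literature. Given them, a Morse-type estimate inside $\C(Y)$ provides uniform quasi-geodesic constants for $\gamma_\sigma$, completing the construction.
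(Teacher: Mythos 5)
Your overall architecture matches the paper's: build a pseudo-Anosov on $Y$ out of Dehn twists in the endpoints of $\sigma$, string together its orbit of $\sigma$ with connecting geodesics, and take $MCG(\Sigma)$-orbits of one representative per orbit of segments. But there is a genuine gap at exactly the point you flag as ``the hard part.'' Writing $c=\psi_\sigma(a)$, the concatenation $\sigma*\tau$ can backtrack at the junction $b$ by the Gromov product $(a\mid c)_b$, and the triangle inequality only bounds this by $|\sigma|=n$, which is unbounded over the family; an additive error of order $n$ destroys uniformity of the quasi-geodesic constants. The local-to-global principle does not rescue you: to invoke it you must first verify that the path is a local quasi-geodesic with uniform constants at a scale large compared to $\delta$, and that is precisely the statement that the junctions do not backtrack --- it is what needs to be proved, not a tool for proving it. Neither of your two ``uniform inputs'' (a universal lower bound on pseudo-Anosov translation lengths, uniform hyperbolicity of curve graphs) addresses this. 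The paper's mechanism is different in substance: it takes $\psi=D_y^nD_x^n$ precisely so that the pivot curves $z_0=x$, $z_1=y$, $z_2=\psi(x),\dots$ satisfy $d_{z_i}(z_{i-1},z_{i+1})\sim n$, i.e.\ a large \emph{annular subsurface projection} at every pivot, and then Lemma \ref{local big}, via the BGIT, converts these local twisting bounds into a global additive lower bound on distance along the chain, which is what yields uniform constants. That is the missing ingredient; your choice $\psi_\sigma=(T_a^NT_b^{-N})^k$ comes with no control on the annular coefficients at the intermediate vertices of the concatenation.

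Two smaller problems. First, your finiteness claim is false: Theorem \ref{yohsuke} says that large intersection number forces a large subsurface projection, not that bounded curve-graph distance bounds intersection number, and in fact there are infinitely many $MCG(\Sigma)$-orbits of geodesic segments of a given length (this is exactly why the paper later restricts to \emph{thick} segments when extracting the finite collection $\A$). Fortunately the equivariance argument only needs one representative per orbit, finitely many or not. Second, the theorem requires the elements of $\tilde\A_Y$ to be \emph{axes} in the paper's sense, i.e.\ $EC(\psi_\sigma;Y)$-invariant, not merely $\langle\psi_\sigma\rangle$-invariant; you must replace your $\gamma_\sigma$ by the union of its $EC(\psi_\sigma;Y)$-translates (which is why the paper allows a quasi-geodesic to be a finite union of paths). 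This invariance is what later identifies $C(\gamma)$ with a centralizer and makes the separability argument go through.
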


We need a preliminary lemma.
\begin{lemma}\label{local big}
Let $z_0, \dots, z_n$ be a collection of curves in $\cC(Y)$ such that 
$$d_{z_i}(z_{i-1},z_{i+1}) \ge 3\xi$$
for $i=1, \dots, n-1$. Then
$$d_Y(z_0, z_n) \ge \sum_{i=1}^{n} d_{\C(\Sigma)}(z_{i-1}, z_{i}) + 2-2n.$$

\end{lemma}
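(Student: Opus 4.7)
The plan is to show that any geodesic $g$ from $z_0$ to $z_n$ in $\mathcal{C}(Y)$ must pass within distance one of each $z_i$, that the resulting vertices $w_i$ occur in the natural order along $g$, and then to assemble the bound by a triangle inequality on each subsegment. Throughout I view each $z_i$ as the core curve of an annular subsurface, so $d_{z_i}$ denotes subsurface projection to that annulus and satisfies the projection axioms (and BGIT) from Theorem~\ref{axioms-hold}.

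First I would establish a Behrstock-style consistency by induction on $i$: for all $1\le i\le n$ and $0\le j\le i-2$,
$$d_{z_i}(z_{i-1}, z_j) \le \xi,$$
together with the symmetric statement $d_{z_i}(z_{i+1}, z_k)\le \xi$ for $i+2\le k\le n$. The inductive step combines the hypothesis $d_{z_{i-1}}(z_{i-2}, z_i)\ge 3\xi$ with the inductive bound $d_{z_{i-1}}(z_{i-2}, z_j)\le \xi$ to get, via the triangle inequality, $d_{z_{i-1}}(z_j, z_i)\ge 2\xi>\xi$; then axiom (P1) yields $d_{z_i}(z_{i-1}, z_j)\le \xi$ (using the symmetry of the (P1) hypothesis in the two ``outer'' indices). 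Combining these bounds with the main hypothesis gives
$$d_{z_i}(z_0, z_n) \;\ge\; d_{z_i}(z_{i-1}, z_{i+1}) - 2\xi \;\ge\; \xi,$$
which, after a small slack in the constant $3\xi$, is strict.

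Next I would apply BGIT (the second clause of Theorem~\ref{axioms-hold}) to each interior index: any geodesic $g$ from $z_0$ to $z_n$ in $\mathcal{C}(Y)$ contains a vertex $w_i$ disjoint from $z_i$, hence with $d_Y(w_i, z_i)\le 1$. To obtain the correct order of the $w_i$ along $g$, I argue inductively. Because $w_{i+1}$ and $z_{i+1}$ are at distance one in $\mathcal{C}(Y)$, their annular projections to $z_i$ differ by a uniformly bounded amount, so
$$d_{z_i}(z_0, w_{i+1}) \;\ge\; d_{z_i}(z_0, z_{i+1}) - O(1) \;\ge\; 2\xi - O(1) \;>\; \xi.$$
BGIT applied to the subgeodesic of $g$ from $z_0$ to $w_{i+1}$ then places a vertex disjoint from $z_i$ on that subgeodesic, which I take as $w_i$; iterating produces $w_1<w_2<\cdots<w_{n-1}$ along $g$.

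Setting $w_0:=z_0$ and $w_n:=z_n$, the length of $g$ equals $\sum_{i=1}^n d_Y(w_{i-1}, w_i)$, and by the triangle inequality each term is at least $d_Y(z_{i-1}, z_i)-2$ for $2\le i\le n-1$ and at least $d_Y(z_{i-1}, z_i)-1$ for $i\in\{1,n\}$ (since $w_0$ and $w_n$ are exact), summing to $\sum_i d_Y(z_{i-1}, z_i) + 2 - 2n$, as claimed. The main obstacle is the ordering step: one needs the Lipschitz behavior of annular projections for curves that are close in $\mathcal{C}(Y)$, and must make sure the strict inequality required by BGIT survives the $O(1)$ slack introduced in passing between $w_{i+1}$ and $z_{i+1}$. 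The generous gap $3\xi$ in the hypothesis is exactly what allows the triangle-inequality losses in Stages~1 and~2 to be absorbed.
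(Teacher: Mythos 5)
Your argument is correct, and its first stage (the Behrstock-type consistency $d_{z_i}(z_0,z_{i-1})\le\xi$, $d_{z_i}(z_{i+1},z_n)\le\xi$ obtained from (P1) by induction, hence $d_{z_i}(z_0,z_n)\ge d_{z_i}(z_{i-1},z_{i+1})-2\xi\ge\xi$) is exactly the first half of the paper's proof. The two arguments diverge only in how the BGIT step is organized. The paper proves the distance estimate by induction on $n$: assuming the bound for $z_0,\dots,z_k$, it applies BGIT once to a geodesic $[z_0,z_{k+1}]$ and the annulus $z_k$ to get $d_Y(z_0,z_{k+1})\ge d_Y(z_0,z_k)+d_Y(z_k,z_{k+1})-2$, and telescopes. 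You instead fix a single geodesic $g=[z_0,z_n]$, place all the vertices $w_i$ on it, and must then prove they occur in order; this forces you to import an extra ingredient (the coarse Lipschitz property of annular projections for curves at distance one in $\C(Y)$, to pass from $d_{z_i}(z_0,z_{i+1})\ge 2\xi$ to $d_{z_i}(z_0,w_{i+1})>\xi$) and to handle the degenerate case where $w_{i+1}$ happens to be disjoint from $z_i$ already, in which case $\pi_{z_i}(w_{i+1})$ is undefined and you should take $w_i=w_{i+1}$ (the weak ordering and the final telescoping still go through). Both routes yield the same bound with the same loss of $2$ per interior vertex; the paper's induction on $n$ is slightly more economical because each application of BGIT is made against an actual $z_i$ rather than a nearby $w_{i+1}$, so no Lipschitz transfer or ordering discussion is needed. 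In both versions the inequality $d_{z_i}(\cdot,\cdot)\ge\xi$ versus the strict $>\xi$ demanded by BGIT requires a word about constants, which you correctly flag.
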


\begin{proof}
We first show that if $0<i<n$ then
$$|d_{z_i}(z_0, z_n) - d_{z_i}(z_{i-1}, z_{i+1})| \le 2\xi$$
and therefore
$$d_{z_i}(z_0, z_n) \ge d_{z_i}(z_{i-1}, z_{i+1}) -2\xi \ge \xi.$$
We induct on $n$ where the base case is when $n=2$. Since $i< n$ we have
$$d_{z_{i-1}}(z_0, z_i) \ge \xi$$
so by (P1)
$$d_{z_i}(z_0, z_{i-1}) \le \xi.$$
Similarly
$$d_{z_i}(z_{i+1}, z_n) \le \xi$$
and the desired bounds follow from the triangle inequality. 
We note that this also proves that all of the $z_i$ intersect.

We now prove the distance estimate via induction. The base case is when $n=1$ and the inequality is an equality by observation. Now assume the estimate holds for $k$. By the BGIT any geodesic from $z_0$ to $z_k$ must pass within one of $z_k$. Then by the triangle inequality
$$d_{\C(Y)}(z_0,z_{k+1}) \ge d_{\C(Y)}(z_0, z_k) + d_{\C(Y)}(z_k,z_{k+1}) - 2$$
and the bound follows.
\end{proof}

\begin{proof}[Proof of Theorem \ref{constructing pAs}]
Let $\sigma$ be a geodesic segment of length $\ge 3$ in some $\C(Y)$ with $Y$ non-annular.
Let $x$ and $y$ be the endpoints of $\sigma$. We show that for a sufficiently large positive integer $n$ the composition of Dehn twists $\psi = D_y^n D_x^n$ is a pseudo-Anosov on $Y$ with an axis (with uniform constants) that contains $\sigma$. Let $z_{2k} = \psi^k(x)$ and $z_{2k+1} = \psi^k(y)$. Let 
$$\gamma = \cup_k \psi^k(\sigma\cup D^n_x(\sigma)).$$
This is a $\psi$-equivariant path where the $z_k$ appear in the order given by their indices and the path is geodesic between each $z_k$ and $z_{k+1}$.
Note that
$$d_{z_i}(z_{i-1}, z_{i+1}) = d_{z_i}(z_{i-1}, D^n_{z_{i}}(z_{i-1})) \sim n$$
so when $n$ is sufficiently large the $z_i$ satisfies the conditions of Lemma \ref{local big}. The estimate there implies that $\gamma$ is a quasi-geodesic with uniform constants. To get a quasi-geodesic that is $EC(\psi;Y)$-invariant we take $\gamma_\sigma$ to be the $EC(\psi;Y)$ translates of $\gamma$. This is a collection (in fact a finite collection) of quasi-geodesics with uniform constants that are all in a bounded Hausdorff distance of each other. Therefore $\gamma_\sigma$ is uniformly quasi-isometric to $\ZZ$.

The collection of geodesic segments in curve graphs $\C(Y)$ with $Y$ non-annular is $MCG(\Sigma)$-invariant.  We choose a representative in each $MCG(\Sigma)$-orbit and apply the  above construction and then take the $MCG(\Sigma)$-orbit of this collection of axes. Finally we add all of the curve graphs $\C(Y)$ with $Y$ annular to form $\tilde\A$.
\end{proof}

\subsection{Constants}\label{constants}
As in the proof of Theorem \ref{thm1} we are now ready to fix constants. Fix $\theta>0$ as in Proposition \ref{acylind_finite} such that for any axis $\gamma \in \tilde\A_Y$ there are only finitely many double cosets in of $C(\gamma)$ in $MCG(\Sigma;Y)$ whose projection to $\gamma$ is $> \theta$.
Choose $\chi'>0$ to be the projection constant from Theorem \ref{surface axioms} with diameter bound $\theta$. As before we modify the projections so that the strong projection axioms hold with projection constant $\chi$ and let $K=4\chi$ so that the distance formula holds with threshold $K$. When then choose $T$ with respect to $K$ as in Proposition \ref{curve estimate}.

\subsection{Preferred axes}
By Corollary \ref{finite-thick} there are finitely many $\hat T$-thick geodesic segments $\sigma_1, \dots, \sigma_n$ of length $\check T$ such that every other $\hat T$-thick geodesic segment of length $\check T$ is contained in the $MCG(\Sigma)$-orbit of one of the $\sigma_i$. By Theorem \ref{constructing pAs}, for each $\sigma_i$ there exists a $\gamma_i \in \tilde \A$ such that $\sigma_i\subset \gamma_i$. Let $\A$ be the $MCG(\Sigma)$-orbits of the $\gamma_i$.

\subsection{Coloring $\A$}
We can choose a subgroup $G< MCG(\Sigma)$ such that the $G$-orbit of a subsurface is a transverse collection. 
When $\Sigma$ is a closed surface this is Theorem \cite[Lemma 5.7]{bbf}.
When $\Sigma$ has punctures, the statement follows by blowing up the punctures to boundary components to obtain a surface $\tilde \Sigma$
and doubling to obtain the surface $D\tilde \Sigma$. Then $MCG(\tilde \Sigma)$
is a subgroup of $MCG(D\tilde \Sigma)$ and we have the desired finite index subgroup by taking the intersection with the one in $MCG(D\tilde \Sigma)$.
Finally we project this subgroup to $MCG(\Sigma)$.

By Corollary \ref{mcg_separable} the stabilizer $C(\gamma_i)$ is separable in $MCG(\Sigma)$ and therefore in $G$. By our choice of $\theta$ from Proposition \ref{acylind_finite} we can use the separability $C(\gamma_i)$ to find a finite index subgroup $H_i$ of $G$ such that if $h \in H_i \cap MCG(\Sigma;Y)$ then $\diam \pi_{\gamma_i}(h(\gamma_i)) < \theta$.
Let
$$H = H_1\cap \dots \cap H_n$$
and add axes $\gamma_{n+1}, \dots, \gamma_m$ so that we  have one axis in $\A$ for each $H$-orbit. Let $\A_i$ be the $H$-orbit of $\gamma_i$. The $\A_i$ will partition $\A$ and each one will satisfy the conditions of Theorem \ref{surface axioms}.

\subsection{Product of quasi-trees $\mathcal X$}
Again we follow the proof of Theorem \ref{thm1}. For each $\A_i$ we have the quasi-tree $\C_K(\A_i)$. Choose a filling collection $\tilde x$ and an $m$-tuple of curves $\hat x= \{x_1, \dots, x_m\}$ such that each $x_i$ lies in some axis in $\A_i$. We let
$$\mathcal X = \prod_{i=1}^m \C_K(\A_i)$$
with the $\ell^1$-metric. We will show that $H$ quasi-isometrically embeds in $\mathcal X$. By Section \ref{induction} this implies that $MCG(\Sigma)$ quasi-isometrically embeds in a finite product of quasi-trees. The $H$-orbit of $\hat x$ in $\mathcal X$ gives a Lipschitz embedding of $H$ in $\mathcal X$. For the lower bound we have
$$\frac14\sum_i\sum_{\gamma\in\A_i} d_\gamma(x_i, h(x_i))_K \le d_{\mathcal X}(\tilde x, h(\tilde x))$$
for all $h\in H$ by the distance formula. By Proposition \ref{curve estimate} we have
$$\sum_{Y\subseteq\Sigma} d^T_Y(\tilde x, h(\tilde x)) \le 2\sum_i\sum_{\gamma\in\A_i} d_\gamma(x_i, h(x_i))_K + \frac{2T}\xi \sum_{Y\subseteq\Sigma}(d_Y(\tilde x, \hat x)_\xi + d_Y(h(\tilde x), h(\hat x))_\xi).$$
The last term on the right is finite since $d_Y(\tilde x, \hat x)<\xi$ for all but finitely many $Y$ and is  independent of $h$ since
$$d_Y(\tilde x, \hat x)_\xi = d_{h(Y)}(h(\tilde x), h(\hat x))_\xi$$
by the group equivariance of the projections. The thick distance formula (Theorem \ref{thick.MM}) then gives a linear lower bound on the left hand side of the inequality 
in terms of the word length $|h|$. This completes the proof Theorem \ref{thm2}.

\bibliographystyle{amsalpha}
\bibliography{ref}

\end{document}